\newcommand{\R}{{\mathbb R}}
\newcommand{\N}{{\mathbb N}}
\newcommand{\ITP}{\textbf{(ITP) }}
\newcommand{\BVP}{\textbf{(BVP) }}
\newcommand{\SP}{\textbf{(SP) }}
\newcommand{\no}{\nonumber}
\newcommand{\be}{\begin{eqnarray}}
\newcommand{\ben}{\begin{eqnarray*}}
\newcommand{\en}{\end{eqnarray}}
\newcommand{\enn}{\end{eqnarray*}}
\newcommand{\ba}{\backslash}
\newcommand{\pa}{\partial}
\newcommand{\ov}{\overline}
\newcommand{\g}{\gamma}
\newcommand{\G}{\Gamma}
\newcommand{\Om}{\Omega}
\newcommand{\wi}{\widetilde}
\newcommand{\Int}{\int\limits}
\newcommand{\re}{\text{Re}\,}
\newcommand{\im}{\text{Im}\,}
\newcommand{\dd}{\,\text{d}}
\newtheorem{theorem}{Theorem}[section]
\newtheorem{lemma}[theorem]{Lemma}
\newtheorem{corollary}[theorem]{Corollary}
\newtheorem{remark}[theorem]{Remark}
\begin{document}
\renewcommand{\theequation}{\arabic{section}.\arabic{equation}}
\title{Direct and inverse scattering problems by an unbounded rough interface with buried obstacles}
%
\author{Yulong Lu\thanks{Mathematics Institute, University of Warwick, Coventry, CV4 7AL, UK
({\sf yulong.lu@warwick.ac.uk}). }
\and
Bo Zhang\thanks{LSEC and Academy of Mathematics and Systems Sciences, Chinese Academy of Sciences,
Beijing, 100190, China and School of Mathematical Sciences, University of Chinese Academy of Sciences, 
Beijing 100049, China ({\sf b.zhang@amt.ac.cn}).}
}
\date{} 
\maketitle

\begin{abstract}
In this paper, we consider the direct and inverse problem of scattering of time-harmonic
waves by an unbounded rough interface with a buried impenetrable obstacle. We first study
the well-posedness of the direct problem with a local source by the variational method; the
well-posedness result is then extended to scattering problems associated with point source waves
(PSWs) and hyper-singular point source waves (HSPSWs). For incident PSW or HSPSW waves,
the corresponding total field admits a uniformly bounded estimate in any compact subset
far away from the source position. Moreover, we show that the scattered field due to HSPSWs
can be approximated by the scattered fields due to PSWs. With these properties and a
novel reciprocity relation of the total field, we prove that both the rough surface and the buried
obstacle can be uniquely determined by the scattered near-field data measured only on a line
segment above the rough surface. The proof substantially relies upon constructing a well-posed
interior transmission problem for the Helmholtz equation.

\vspace{0.2in}
{\bf Keywords:} Inverse scattering, unbounded rough interface, buried obstacle,
variational method, reciprocity relation, interior transmission problem.

\vspace{.1in}
{\bf MSC 2010:} 35R30, 78A46.
\end{abstract}

\section{Introduction}\label{sec1}
\setcounter{equation}{0}

This paper is concerned with the problem of scattering of time-harmonic waves from an unbounded
rough interface with a buried impenetrable obstacle in two dimensions.
This model problem has extensive applications in physics and engineering, such as
ocean exploration by sonar and remote sensing by synthetic aperture radar (SAR).
The {\em unbounded rough interface} is assumed to be a non-local perturbation of
an infinite plane such that the interface lies within a finite distance of the original plane.
We assume further that the whole space is separated by the unbounded rough interface
with the medium above and below the rough interface being both homogeneous and isotropic.
Many work has been done on the numerical approximation and computation for rough surface
scattering problems (see, e.g. \cite{DeS,Li16,Ogilvy,AG,Chew} and the references quoted therein).
The mathematical theory of rough surface scattering problems has also been studied by many authors
using integral equation methods (see, e.g. \cite{SN4,ZhangRS,CZ1999,SN5,NAC,SN1,SN2})  
or by the variational approach (see, e.g. \cite{SN6,SN3,HL2011,Hu15,AS,Li}).
It should be mentioned that the variational approach first proposed in \cite{SN3} for the rough surface scattering 
can be applied to study the well-posedness of the scattering problems by unbounded Lipschitz surfaces in both two 
and three dimensions. This approach can also give an a priori estimate
of the solution in terms of the data with an explicit dependence on the wave number.

In this paper, we consider the direct scattering problem modeled by the Helmholtz equation
$\Delta u + k^2u = g$ in $\R^2$ with the wave number $k^2=k^2_1$ above the rough interface
and $k^2=k^2_2$ below it. And the total field $u$ satisfies transmission conditions
on the rough interface and boundary conditions on the buried impenetrable obstacle $D$.
The model includes the scattering excited by a local source when $g\in L^2(\R^2)$
with a compact support and a point source wave or a singular point source when $g$ denotes
a general distribution. Figure \ref{fig1} presents the geometrical setting of the scattering problem.
To accomplish the scattering problem, a radiation condition at infinity is required.
Due to the unbounded rough surface, the Sommerfeld radiation condition is no longer valid.
We require that the solution above the rough interface and below the buried obstacle
can be represented in an integral form as a superposition of upward (downward) propagating
and evanescent plane waves. This radiation condition is equivalent to the upward propagating radiation 
condition first proposed by Chandler-Wilde and Zhang in \cite{SN5} for the two-dimensional case.

Related work on the direct scattering problem can be found in \cite{SN6,SN3,HL2011,Hu15,AS,Li}.
These papers employed the variational method to study the acoustic scattering from sound-soft or 
sound-hard rough surfaces or penetrable rough layers and the electromagnetic scattering from rough layers 
with an absorbing medium. Different from these work, this paper focuses on
the wave scattering from an unbounded rough interface with a buried impenetrable obstacle. 
The existence of an obstacle in the model will make the analysis much more complicated.
In particular, we can not obtain a priori estimates in terms of the data in case of non-absorbing medium
because of sign-changing terms on the boundary of the obstacle.
However, the a priori estimate can be established under the condition that the medium below
the rough interface is absorbing. This condition fits well with certain engineering applications,
such as underground remote sensing since the soil is in fact energy-absorbing.
In the non-absorbing case, the variational formation is reduced into an operator equation
with the operator being Fredholm with index zero. Thus, the existence of solutions follows from the
uniqueness of solutions. In particular, our scattering problem is well-posed in the case when the
obstacle is partially coated in a non-absorbing medium. The existence of solutions
to the scattering problem due to PSWs and HSPSWs was studied already in a different setting in \cite{SN6}.
However, we have the following key observations. First, we show that the total field is uniformly
bounded with respect to the source positions in any compact set far away from the source position 
(see Theorem \ref{thm4_1}). This uniform bound is useful for constructing a well-posed interior transmission
problem that will be used to prove the uniqueness result for the inverse problem. Moreover, we show that
the scattered field due to HSPSWs can be approximated by the scattered field due to PSWs (see Theorem \ref{thm3}).
Since we will mainly employ the singularity of HSPSWs in the study of the inverse scattering problem, 
this approximation result makes it possible to use the scattered field induced by PSWs instead.

As for uniqueness results for inverse scattering problems, there exist a vast literature on the case of 
bounded obstacle scattering problems (see, e.g. \cite{Isakov,Kirsch3}).
Moreover, inverse scattering from a multilayered background medium is also studied (see \cite{HE,LZ1}). 
However, the method used in \cite{HE,LZ1} only works for the case when the transmission constant $\lambda\neq 1$, 
and the method used in \cite{LZ1} also relies heavily on a priori estimates of the scattering solution on the interface 
between a layered medium which is hard to be established in the case of rough surface scattering problems.
There are also numerous uniqueness results on inverse scattering problems on periodic structures, 
which can be viewed as a special case of rough surfaces (see, e.g. \cite{Bao,Kirsch2,Kirsch1,YZ1,YZ2} 
and the references quoted there). 
Recently, the scattering problems have also been studied in \cite{LZ2,Delbary,Griesmaier} from an obstacle 
in a two-layered background medium with a planar interface.

There are only few uniqueness results on inverse rough surface scattering problems. In \cite{SN7}, 
Chandler-Wilde and Ross proved that a sound-soft rough surface in
a lossy medium can be uniquely determined by the scattered field associated with only one incident plane wave. 
Hu \cite{Hu} proved that sound-soft rough surfaces and rough layers (with transmission constant $\lambda\neq 1$) 
can be uniquely recovered from the scattered field due to PSWs.

Recently, Yang, Zhang and Zhang \cite{YZZ} proposed a new method to prove uniqueness of inverse
scattering from penetrable obstacles including the case when the transmission constant $\lambda$ = 1.
The main idea is based on constructing a well-posed interior transmission problem on a small domain.
Precisely, suppose that there are two obstacles which produce the same scattered data. 
One constructs a local well-posed interior transmission problem with the boundary data given by 
the scattered field corresponding to point sources and different obstacles. The scattered field 
corresponding to one obstacle can be shown uniformly bounded as the source position approaches the boundary 
of the other obstacle. Then one uses this fact and the well-posedness of the interior transmission problem 
to get the contradiction that the $H^1$-norm (or $L^2$-norm) of the point sources (or the hyper singular point sources)
are uniformly bounded. An important feature of this idea is that the interior transmission
problem is constructed locally on a small domain. This motivates us to adapt the similar idea to prove uniqueness
results in rough surface scattering problems. We note that in the proof of uniqueness results in
\cite{YZZ}, a denseness result (Theorem 5.5 in \cite{Colton}) of incident plane waves
is used, which can not be generalized to incident point source waves in rough surface scattering problems. 
In our proof, the denseness result is replaced by the approximation property of the
scattered field due to PSWs and HPSWs.

This paper is organized as follows. In Sections \ref{sec2} and \ref{sec3}, we formulate the
boundary value problem modeling the direct scattering problem with a local source $g$ and
give its equivalent variational formulation. Then we study the solvability of the variational
formation in two different cases according to whether the medium below the rough interface
is lossy. In Section \ref{sec4}, we show that similar results also hold for incident PSWs and HSPSWs waves. 
Moreover, we prove two important results about the scattered field, that is, the uniform boundedness of 
the total field with respect to the source positions and the approximation property about the scattered field.
In section \ref{sec5}, we first state some results on interior transmission problems and then prove the
uniqueness result of the inverse scattering problem based on the interior transmission problem and a
novel reciprocity relation.

\section{The direct problem and its variational formulation}\label{sec2}
\setcounter{equation}{0}

In this section, we present the direct problem and its equivalent
variational formulation. To this end, we need some notations. For $h\in\R$,
let $\G_h=\{x=(x_1,x_2)\in\R^{2}\,|\,x_2=h\}$ and denote $U_h^{\pm}=\{x\in\R^2\,|\,x_2\gtrless h\}$.
For a given bounded function $f\in C^2(\R)$, we define $f_{-}:=\text{inf}_{x\in\R}f(x)>0,\;
f_{+}:=\text{sup}_{x\in\R}f(x)<+\infty$. Then the rough interface is defined by
$\G :=\{(x_1,f(x_1))\,|\,x_1\in\R\}$.
Denote by $D$ the buried impenetrable obstacle with boundary $\pa D\in C^2$,
and assume that $D$ is below the rough interface, this is,
$\text{dist}(\overline{D},\overline{U_{f_{-}}^+})>0$.
For simplicity, we assume that $D\subset U^{-}_0$. Assume further that the buried obstacle is partially coated
by a thin dielectric layer so that $\pa D=\overline{\G}_1\cup\overline{\G}_2$,
where $\G_1 $ and $\G_2$ are two disjoint open subsets of $\pa D$.
Denote by $\G_1$ the coated part with an impedance function $\beta(x)$ and by $\G_2$ the uncoated part.
In particular, the obstacle is sound-soft if $\G_1 = \varnothing$, and
fully coated obstacle (an impedance obstacle) corresponds to the case when $\G_2 = \varnothing$.
Note also that the obstacle becomes sound-hard when the impedance function $\beta(x)$ vanishes on $\pa D$.
Denote by $\Om_1$ the region above $\G$, and by $\Om_2$ the region below $\G$ and outside $D$.
We also define $\Om_{1,h}:=\{x=(x_1,x_2)\in\Om_1|x_2 < h\}$, $\Om_{2,h}:=\{x=(x_1,x_2)\in\Om_2|x_2>-h\}$.
For simplicity, let $h>f_+$ satisfy that $D\subset U^{+}_{-h}$, and define $\Om_h:=\{x\in\R^2|-h<x_2<h\}\setminus\overline{D}$.
For $M >0$, denote $\Om_{i,h}(M):=\{x\in\Om_{i,h}|-M\leq x_1\leq M\}$
and $\g_i(\pm M):=\{x\in\Om_{i,h} | \ |x_1|=\pm M\}, i = 1,2$.
Let $\nu(x)$ be the unit normal vector at $x\in \G$ pointing into $\Om_1$
or at $x\in \pa D$ pointing out of D. For $\varepsilon>0$, and $y\in\R^2$,
denote by $B_{\varepsilon}(y)$ the ball centered at $y$ with radius $\varepsilon$.
We first consider the scattering problem with a local source $g\in L^2(\R^2)$
compactly supported in $\Om_h$. The cases with incident waves PSWs and HSPSWs will be considered
in Section \ref{sec4}.

\begin{figure}
\centering
\includegraphics[scale=0.5]{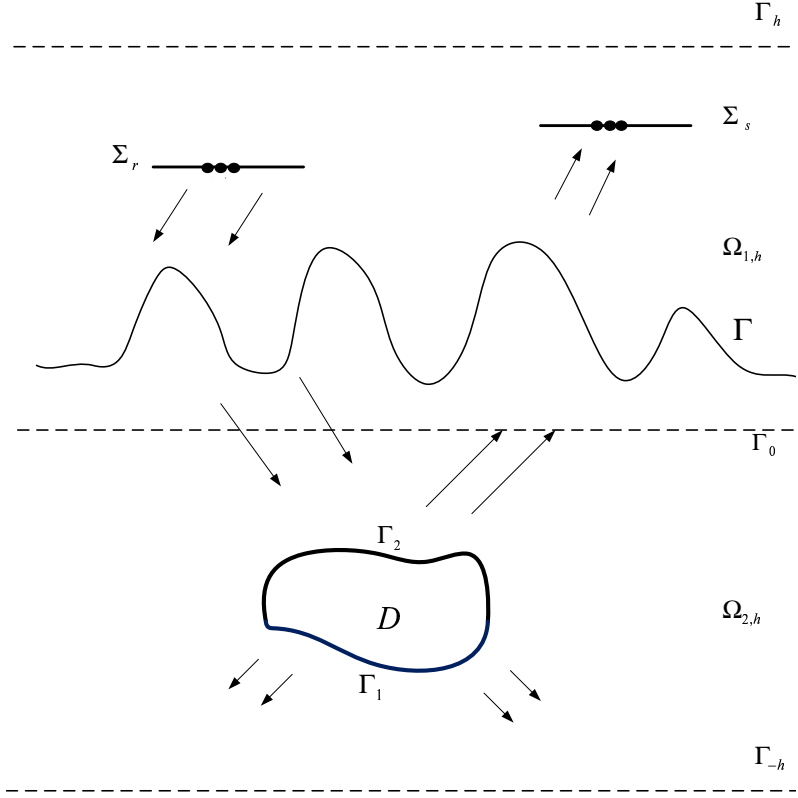}
\caption{Scattering from an unbounded rough interface with impenetrable obstacles.}\label{fig1}
\end{figure}

We are now ready to formulate the scattering problem. Assume that $\Om_1$ and $\Om_2$ are
filled with two isotropic homogenous materials denoted by the wave numbers $k_1$ and $k_2$
respectively satisfying
\begin{equation}\label{eq2_1}
        \begin{aligned}
 k^2_1> \re(k^2_2)> 0 &  \;\; \text{or}\;\;0<k^2_1< \re(k^2_2)\\
 & \im(k^2_2)\geq 0
\end{aligned}
\end{equation}
This means that the medium above the rough interface is
non-absorbing and that below the interface it may be absorbing. The condition \eqref{eq2_1} is
usually termed as {\em non-trap} condition because it
ensures the uniqueness of the scattering problem.
We only consider the case $k^2_1> \re(k^2_2)$; the other one case can be dealt with similarly (see Remark \ref{rem3}).
The total field $u$ due to the source $g$ satisfies the Helmholtz equations
\be\label{eq1}
\Delta u + k^2 u = g\quad\text{in}\;\;\R^2\backslash D
\en
where $k^2(x):= k^2_1$ for $x\in\Om_1$ and $k^2(x):=k^2_2$ for $x\in\Om_2$.
On the rough interface, the total field $u$ satisfies the transmission condition
\be
\label{eq2}
u^{+}=u^{-},\;\frac{\pa u^{+}}{\pa\nu}=\frac{\pa u^{-}}{\pa\nu}\quad\text{on}\;\;\G,
\en
where $u^{+},{\pa u^{+}}/{\pa\nu}$ (resp. $u^{-}, {\pa u^{-}}/{\pa\nu}$) denote the limits on $\G$
from the above (resp. below). This implies that the field and its normal derivatives are continuous across the interface.
On the boundary of the buried obstacle $\pa D$, the field $u$ satisfies a mixed boundary condition
\be\label{eq3}
\frac{\pa u}{\pa\nu}u+i\beta u=0\quad\text{on}\;\;\G_1,\quad u=0\quad\text{on}\;\;\G_2
\en
with $\beta\geq 0,\;\beta\in C(\G_1)$ representing the physical property of the obstacle.
We use the condition $\mathcal{B}(u)=0$ to denote the boundary condition \eqref{eq3}.

Since $\Om_1$ and $\Om_2$ are unbounded, radiation conditions at infinity must be
imposed on the scattered and transmitted field. It is worth to note that the standard Sommerfeld radiation condition
is not appropriate for rough surface scattering problems.
Similar to \cite{SN3}, the scattered field is required to be represented in an integral form
as a superposition of upward (resp. downward) propagating and
evanescent plane waves in $U_h^{+}$ (resp. $U_{-h}^{-}$).

For $\phi\in L^2(\R)$, define its Fourier transform by
\ben
\hat{\phi}(\xi):=\mathcal{F}\phi(\xi)=\frac{1}{\sqrt{2\pi}}
\int_\R\exp(-ix_1\cdot\xi)\phi(x_1)\dd x_1,\quad\xi\in\R.
\enn
We require $u$ to satisfy the angular spectrum representation:
\be\label{eq4}
u(x)=\frac{1}{\sqrt{2\pi}}\Int_\R{}\text{exp}(i[(x_2-h)\sqrt{k_1^2-\xi^2}+x_1\cdot\xi])
    \mathcal{F}(u|_{\G_{h}}) (\xi) \dd\xi, \quad x\in U^+_{h},\\ \label{eq5}
u(x)=\frac{1}{\sqrt{2\pi}}\Int_\R{}\text{exp}(i[-(x_2+h)\sqrt{k_2^2-\xi^2}+x_1\cdot\xi])
    \mathcal{F}(u|_{\G_{-h}}) (\xi) \dd\xi, \quad x\in U^-_{-h},
\en
where $h>f_+$, $u|_{\G_{h}}\in L^2(\G_{h})$, and the square root in the expression takes
the negative imaginary axis as the branch cut in the complex plane, that is for
$z\in\mathbb{C}, z = z_1 + iz_2,z_1,z_2\in\R$, we have
\be\label{eq6}
\sqrt{z} = \text{sgn}(z_2)\sqrt{\frac{|z| + z_1}{2}} + i \sqrt{\frac{|z| - z_1}{2}}
\en
Define $V_h:=\{u|u\in H^1(\Om_h),\;\;u=0\;\;\text{on}\;\;\G_2\}$.
The inner product and norm in $V_h$ are the same as the function space $H^1(\Om_h)$. Then the direct
scattering problem can be stated as the following boundary value problem.

{\bf Boundary Value Problem (BVP):} Given a source $g\in L^2(\R^2)$ compactly supported in $\Om_h$,
find $u$ such that $u \in V_h$ satisfying \eqref{eq1}-\eqref{eq3} and the radiation conditions \eqref{eq4} and \eqref{eq5}.

For $s \in\R$, define $H^s(\G_h)$  as the completion of $C_0^{\infty}(\G_h)$ in the following norm
\be\label{eq7}
\| \phi\|^2_{H^s(\G_h)} : = \Int_{\R} (1 + \xi^2)^s |\hat{\phi}(\xi)|^2 \dd\xi
\en
Introduce Dirichlet-to-Neumann (DtN) operators $T_1$ on $\G_h$ and $T_2$ on $\G_{-h}$
\ben
(T_1\phi)(x_1)=\frac{i}{\sqrt{2\pi}}\Int_{\R}\sqrt{k_1^2-\xi^2}\text{exp}(ix_1\cdot\xi)\hat{\phi}(\xi)\dd\xi,\quad x\in \G_h\\
(T_2\phi)(x_1)=\frac{i}{\sqrt{2\pi}}\Int_{\R}\sqrt{k_2^2-\xi^2}\text{exp}(ix_1\cdot\xi)\hat{\phi}(\xi)\dd\xi,\quad x\in \G_{-h}
\enn
The next lemma collects some properties of the DtN operators.

\begin{lemma}\label{lem0}
(i) $T_1: H^{1/2}(\G_h)\rightarrow H^{-1/2}(\G_h)$ and
$T_2: H^{1/2}(\G_{-h})\rightarrow H^{-1/2}(\G_{-h})$ are bounded linear operators.

(ii) For $\phi\in H^{1/2}(\G_h)$ and $\psi\in H^{1/2}(\G_{-h})$, we have
\be\label{eq8}
\re(\Int_{\G_h}\overline{\phi}T_1\phi \dd s)\leq 0,\quad
  \im(\Int_{\G_h}\overline{\phi}T_1\phi \dd s)\geq0,\\ \label{eq9}
\re(\Int_{\G_{-h}}\overline{\psi}T_2\psi \dd s)\leq0,\quad
 \im(\Int_{\G_{-h}}\overline{\psi}T_2\psi \dd s)\geq 0
\en

(iii) For $\phi_j\in H^{1/2}(\G_h)$ and $\psi_j\in H^{1/2}(\G_{-h})$, $j = 1,2$, we have
\be\label{eq10}
\Int_{\G_{h}}\phi_1T_1\phi_2 \dd s=\Int_{\G_{h}}\phi_2T_1\phi_1 \dd s,\quad
 \Int_{\G_{-h}}\psi_1T_2\psi_2 \dd s = \Int_{\G_{-h}}\psi_2T_2\psi_1 \dd s.
\en
\end{lemma}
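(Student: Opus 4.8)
The plan is to reduce all three assertions to elementary one‑dimensional computations on the Fourier side. Since $\widehat{T_1\phi}(\xi)=i\sqrt{k_1^2-\xi^2}\,\hat\phi(\xi)$ and $\widehat{T_2\psi}(\xi)=i\sqrt{k_2^2-\xi^2}\,\hat\psi(\xi)$, Plancherel's theorem turns every quantity appearing in \eqref{eq8}--\eqref{eq10} into an integral against the Fourier symbol $\sqrt{k_j^2-\xi^2}$, and the only real work is bookkeeping with the branch \eqref{eq6} of the square root. I would first establish the identities for $\phi,\psi\in C_0^\infty$, where the relevant integrals converge absolutely, and then pass to $H^{1/2}$ by the very definition of these spaces as completions of $C_0^\infty$, reading $\int_{\G_h}\overline\phi\,T_1\phi\dd s$ as the duality pairing between $H^{-1/2}(\G_h)$ and $H^{1/2}(\G_h)$.

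For (i): by \eqref{eq7},
\[
\|T_1\phi\|_{H^{-1/2}(\G_h)}^2=\int_{\R}(1+\xi^2)^{-1/2}\,|k_1^2-\xi^2|\,|\hat\phi(\xi)|^2\dd\xi,
\]
and the symbol bound $|k_1^2-\xi^2|\le C(1+\xi^2)$, with $C$ depending only on $k_1$, gives $\|T_1\phi\|_{H^{-1/2}(\G_h)}\le C\|\phi\|_{H^{1/2}(\G_h)}$. The same argument, with $|k_2^2-\xi^2|\le C(1+\xi^2)$, handles $T_2$.

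For (ii): Plancherel gives
\[
\int_{\G_h}\overline\phi\,T_1\phi\dd s=i\int_{\R}\sqrt{k_1^2-\xi^2}\,|\hat\phi(\xi)|^2\dd\xi,
\]
and I would split the integral at $|\xi|=k_1$. Since $k_1^2$ is a positive real, \eqref{eq6} gives $\sqrt{k_1^2-\xi^2}>0$ for $|\xi|<k_1$, so this part contributes a nonnegative imaginary number, while $\sqrt{k_1^2-\xi^2}=i\sqrt{\xi^2-k_1^2}$ for $|\xi|>k_1$, so this part contributes a nonpositive real number; taking real and imaginary parts yields \eqref{eq8}. For $T_2$, because $\im(k_2^2)\ge0$ the point $k_2^2-\xi^2$ lies in the closed upper half-plane, so \eqref{eq6} shows $\sqrt{k_2^2-\xi^2}=a(\xi)+ib(\xi)$ with $a(\xi),b(\xi)\ge0$; hence $i\sqrt{k_2^2-\xi^2}=-b(\xi)+ia(\xi)$ and
\[
\re\Big(\int_{\G_{-h}}\overline\psi\,T_2\psi\dd s\Big)=-\int_{\R}b(\xi)|\hat\psi(\xi)|^2\dd\xi\le0,\qquad
\im\Big(\int_{\G_{-h}}\overline\psi\,T_2\psi\dd s\Big)=\int_{\R}a(\xi)|\hat\psi(\xi)|^2\dd\xi\ge0,
\]
which is \eqref{eq9}.

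For (iii): the bilinear Parseval identity $\int_{\R}fg\dd x=\int_{\R}\hat f(\xi)\hat g(-\xi)\dd\xi$, combined with the fact that each symbol $\sqrt{k_j^2-\xi^2}$ is even in $\xi$, gives
\[
\int_{\G_h}\phi_1 T_1\phi_2\dd s=i\int_{\R}\sqrt{k_1^2-\xi^2}\,\hat\phi_1(\xi)\hat\phi_2(-\xi)\dd\xi;
\]
the substitution $\xi\mapsto-\xi$ leaves this expression invariant while interchanging $\phi_1$ and $\phi_2$, which is the first identity in \eqref{eq10}, and the second follows verbatim with $(k_1,\G_h)$ replaced by $(k_2,\G_{-h})$. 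None of this is difficult; the one point requiring care—the ``main obstacle'' such as it is—is the consistent use of the branch \eqref{eq6}, in particular verifying that the evanescent regime $|\xi|>k_1$ produces a purely imaginary symbol of the correct sign and that for the (possibly) absorbing medium the symbol $\sqrt{k_2^2-\xi^2}$ stays in the closed first quadrant. Once this is settled, all three claims drop out of Plancherel's theorem.
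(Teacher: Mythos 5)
Your proof is correct and follows essentially the same route as the paper: all three parts are reduced to Fourier-side computations with the symbol $i\sqrt{k_j^2-\xi^2}$ and the branch \eqref{eq6}, exactly as in the paper's proof of (i) and of \eqref{eq9}. The only difference is that the paper delegates \eqref{eq8} and part (iii) to Lemma 3.2 of the Chandler-Wilde--Monk reference, whereas you prove them directly (and your symbol bound $|k_1^2-\xi^2|\le C(1+\xi^2)$ is the correct form of the estimate that the paper states with a small typographical slip in the exponent).
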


\begin{proof}
(i) From the definition of $T_1$ and $T_2$ and by \eqref{eq7}, we have
\ben
\|T_1\phi\|^2_{H^{-1/2}(\G_h)}=\Int_{\R}(1+\xi^2)^{-1/2}|\widehat{T_1\phi}(\xi)|^2 \dd\xi
=\Int_{\R}(1+\xi^2)^{-1/2}|\sqrt{k_1^2-\xi^2}|^2|\widehat{\phi}(\xi)|^2\dd\xi
\enn
Noting that $|\sqrt{k_1^2-\xi^2}|^2\leq C(k_1)(1+\xi^2)^{1/2}$, then
\ben
 \|T_1\phi\|^2_{H^{-1/2}(\G_h)} \leq C(k_1) \|\phi\|^2_{H^{1/2}(\G_h)}
\enn
Similarly, we have
\ben
\|T_1\psi\|^2_{H^{-1/2}(\G_{-h})} \leq C(k_2) \|\psi\|^2_{H^{1/2}(\G_{-h})}
\enn

(ii)  \eqref{eq8} is proved in \cite{SN3}. For $T_2$, we have
\ben
\Int_{\G_{-h}}\overline{\psi}T_2\psi \dd s=\Int_{\R}i\sqrt{k_2^2-\xi^2}|\mathcal{F}\psi(\xi)|^2 \dd \xi.
\enn
By \eqref{eq6} and note that $\im k_2 \geq 0$,
\ben
\re (\sqrt{k_2^2 - \xi^2}) \geq 0, \im (\sqrt{k_2^2 - \xi^2}) \geq 0
\enn
which implies \eqref{eq9}.

(iii) It is proved in \cite{SN3} (see Lemma 3.2 therein).
\end{proof}

\begin{lemma}\label{lem2}
(i) If $u$ satisfies \eqref{eq4} with $u|_{\G_{h}}\in H^{1/2}(\G_{h})$,
then $u\in H^1(U_{h}^+\backslash U_{a}^+)\cap C^2(U_{h}^+)$ for every $a>h$,
\ben
\Delta u + k_1^2 u = 0 \quad \text{in} \quad U^+_{h},
\enn
$\g_+ u = u|_{\G_{h}}$, and
\ben
\Int_{\G_{h}}\overline{v}T_1\g^+u \dd s+k_1^2\Int_{U_{h}^+}u\overline{v}\dd x
-\Int_{U_{h}^+}\nabla u\cdot\nabla\overline{v} \dd x = 0,\quad v\in C_0^{\infty}(\R^2)
\enn
where $\g^+$ is the trace operator from $V_h$ to $H^{1/2}(\G_h)$.
Further, for all $a > h$, the restriction of $u$ and $\nabla u$ to $\G_{a}$ lies in $L^2(\G_a)$ and
\be \label{eq11}
\Int_{\G_a} (|\frac{\pa u}{\pa x_2}|^2 - |\frac{\pa u}{\pa x_1}|^2 + k_1^2 |u|^2) \dd s
\leq 2k_1\im(\Int_{\G_a}\overline{u} \frac{\pa u}{\pa x_2}\dd s).
\en
Moreover, \eqref{eq4} holds with $h$ replaced by $a$ for all $a>h.$ 

(ii) If $u$ satisfies \eqref{eq5} with $u|_{\G_{-h}}\in H^{1/2}(\G_{h})$,
then $u\in H^1(U_{-h}^{-}\backslash U_{a}^{-})\cap C^2(U_{-h}^{-})$, for every $a < -h$,
\ben
\Delta u + k_2^2 u = 0 \quad \text{in} \quad U^{-}_{-h},
\enn
$\g_{-} u = u|_{\G_{-h}}$, and
\ben
\Int_{\G_{-h}} \overline{v}T_2 \g^{-} u \dd s + k_2^2\Int_{U_{-h}^{-}}u\overline{v} \dd x
- \Int_{U_{-h}^{-}}\nabla u\cdot\nabla\overline{v} \dd x = 0,\quad v\in C_0^{\infty}(\R^2)
\enn
where $\g^{-}$ is the trace operator from $V_h$ to $H^{1/2}(\G_{-h})$.
Further, for all $a < -h$, the restriction of $u$ and $\nabla u$ to $\G_{a}$ lies in $L^2(\G_a)$ and
\be\label{eq11_1}
\Int_{\G_a} \left(|\frac{\pa u}{\pa x_2}|^2 - |\frac{\pa u}{\pa x_1}|^2 + \re(k_2^2) |u|^2\right) \dd s
\leq \left(2(\re(k^2_2))^{1/2} + \sqrt{2}(\im(k^2_2))^{1/2}\right)
\im(\Int_{\G_a}\overline{u} \frac{\pa u}{\pa x_2} \dd s).
\en
Moreover, \eqref{eq4} holds with $-h$ replaced by $a$ for all $a<-h.$ 
\end{lemma}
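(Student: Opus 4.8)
Here is how I would approach the proof.

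\medskip

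The plan is to read off every assertion directly from the angular spectrum representations \eqref{eq4}--\eqref{eq5} by means of the Plancherel theorem; I carry out part (i) in detail and indicate the (minor) changes for part (ii). Write $\phi:=u|_{\G_h}$ and $\mu(\xi):=\sqrt{k_1^2-\xi^2}$ with the branch \eqref{eq6}. Since $k_1$ is real, \eqref{eq6} gives $\mu(\xi)=i\sqrt{\xi^2-k_1^2}$ for $|\xi|>k_1$, so the kernel $\exp(i(x_2-h)\mu(\xi))$ equals $\exp(-(x_2-h)\sqrt{\xi^2-k_1^2})$ and decays faster than any power of $\xi$ once $x_2>h$; together with $\hat\phi\in L^2(\R)$, this makes \eqref{eq4} and all of its $x$-derivatives converge absolutely and locally uniformly in $U_h^+$, so $u\in C^\infty(U_h^+)$, and since $\mu(\xi)^2=k_1^2-\xi^2$ each mode is annihilated by $\Delta+k_1^2$, whence $\Delta u+k_1^2u=0$ in $U_h^+$. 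For $b>h$ the restriction $u(\cdot,b)$ has Fourier transform $e^{i(b-h)\mu}\hat\phi$, so $\im\mu\ge0$ and Plancherel give $\|u(\cdot,b)\|_{L^2(\G_b)}\le\|\phi\|_{L^2(\G_h)}$ and $\|\nabla u(\cdot,b)\|_{L^2(\G_b)}^2=\Int_\R(\xi^2+|k_1^2-\xi^2|)e^{-2(b-h)\im\mu(\xi)}|\hat\phi(\xi)|^2\dd\xi<\infty$; integrating over $b\in(h,a)$ — and using $\Int_h^a e^{-2(b-h)\sqrt{\xi^2-k_1^2}}\dd b\le(2\sqrt{\xi^2-k_1^2})^{-1}$ for $|\xi|>k_1$, which turns the quadratic weight into one of order $(1+\xi^2)^{1/2}$, integrable against $|\hat\phi|^2$ since $\phi\in H^{1/2}$ — yields $u\in H^1(U_h^+\ba U_a^+)$ for every $a>h$, as well as $u,\nabla u\in L^2(\G_a)$. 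As $b\downarrow h$ the multiplier $e^{i(b-h)\mu}\to1$ boundedly, so $u(\cdot,b)\to\phi$ in $L^2(\G_h)$ and the trace of $u$ on $\G_h$ from $U_h^+$ equals $\phi=u|_{\G_h}$.

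For the Dirichlet-to-Neumann relation, $\pa u/\pa x_2$ on $\G_h$ has Fourier transform $i\mu(\xi)\hat\phi(\xi)$, i.e. $\pa u/\pa x_2|_{\G_h}=T_1\g^+u$ by the definition of $T_1$. Given $v\in C_0^\infty(\R^2)$ with support in $\{|x_1|<R,\ x_2<a\}$, Green's first identity applied to $u$ (which is $C^2$ on $U_h^+$ and $H^1$ up to $\G_h$) and $v$ on the rectangle $(h,a)\times(-R,R)$ — on whose boundary $v$ survives only on a part of $\G_h$, with outward normal along $-x_2$ — gives $\Int_{U_h^+}(\nabla u\cdot\nabla\ov v-k_1^2u\ov v)\dd x=-\Int_{\G_h}\ov v\,\pa_{x_2}u\dd s=-\Int_{\G_h}\ov v\,T_1\g^+u\dd s$, which is the stated variational identity after rearranging. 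Finally, the semigroup identity $e^{i(x_2-a)\mu}e^{i(a-h)\mu}=e^{i(x_2-h)\mu}$ shows that feeding $u|_{\G_a}$ — which lies in $H^{1/2}(\G_a)$, since $|e^{i(a-h)\mu}|\le1$ gives $\|u|_{\G_a}\|_{H^{1/2}(\G_a)}\le\|\phi\|_{H^{1/2}(\G_h)}$ — into the right-hand side of \eqref{eq4} with $h$ replaced by $a$ reproduces $u$ throughout $U_a^+$; this is the last assertion of (i).

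It remains to prove \eqref{eq11}. Put $\psi_a:=u(\cdot,a)$, so $\hat\psi_a=e^{i(a-h)\mu}\hat\phi$; by Plancherel, $\Int_{\G_a}|\pa_{x_2}u|^2=\Int_\R|k_1^2-\xi^2||\hat\psi_a|^2$, $\Int_{\G_a}|\pa_{x_1}u|^2=\Int_\R\xi^2|\hat\psi_a|^2$, $\Int_{\G_a}|u|^2=\Int_\R|\hat\psi_a|^2$ and $\im\Int_{\G_a}\ov u\,\pa_{x_2}u=\Int_\R\re\mu(\xi)|\hat\psi_a|^2$. For $|\xi|>k_1$ both $\re\mu(\xi)$ and $|k_1^2-\xi^2|-\xi^2+k_1^2$ vanish, so both sides of \eqref{eq11} are carried by $\{|\xi|<k_1\}$, where the inequality reduces to the pointwise bound $2(k_1^2-\xi^2)\le2k_1\sqrt{k_1^2-\xi^2}$, equivalently $\sqrt{k_1^2-\xi^2}\le k_1$. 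This completes part (i).

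For part (ii), \eqref{eq6} now gives $\re\sqrt{k_2^2-\xi^2}\ge0$ and $\im\sqrt{k_2^2-\xi^2}\ge0$ (as in the proof of Lemma \ref{lem0}), so $\exp(-i(x_2+h)\sqrt{k_2^2-\xi^2})$ decays for $x_2<-h$ and the arguments above transfer verbatim with $(k_1,h,T_1,\g^+)$ replaced by $(k_2,-h,T_2,\g^-)$; note that $\pa u/\pa x_2|_{\G_{-h}}$ now carries the symbol $-i\sqrt{k_2^2-\xi^2}$, i.e. $\pa u/\pa x_2|_{\G_{-h}}=-T_2\g^-u$, and Green's identity on $U_{-h}^-$ with the upward outward normal on $\G_{-h}$ recovers the stated variational identity. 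For the energy inequality set $\nu_2(\xi):=\sqrt{k_2^2-\xi^2}$ and $\mu_1:=\re\nu_2$; from $(\re\nu_2)^2+(\im\nu_2)^2=|k_2^2-\xi^2|$ and $(\re\nu_2)^2-(\im\nu_2)^2=\re(k_2^2)-\xi^2$, Plancherel shows the left side of \eqref{eq11_1} equals $2\Int_\R\mu_1(\xi)^2|\hat\psi_a|^2\dd\xi$, while the energy flux through $\G_a$ in the direction of propagation (downward in $U_{-h}^-$, so the sign of the boundary term must be tracked) equals $\Int_\R\mu_1(\xi)|\hat\psi_a|^2\dd\xi$. Thus \eqref{eq11_1} follows once one shows $\mu_1(\xi)\le(\re(k_2^2))^{1/2}+2^{-1/2}(\im(k_2^2))^{1/2}$ for every $\xi$: writing $\beta:=\im(k_2^2)$ and $\mu_1^2=\tfrac12(|k_2^2-\xi^2|+\re(k_2^2)-\xi^2)$, one finds $\mu_1^2=\tfrac12(\sqrt{t^2+\beta^2}-t)=\beta^2/(2(\sqrt{t^2+\beta^2}+t))\le\beta/2$ when $\xi^2\ge\re(k_2^2)$ (with $t=\xi^2-\re(k_2^2)$) and $\mu_1^2=\tfrac12(\sqrt{s^2+\beta^2}+s)\le s+\beta/2\le\re(k_2^2)+\beta/2$ when $\xi^2<\re(k_2^2)$ (with $s=\re(k_2^2)-\xi^2$), so $\mu_1\le\sqrt{\re(k_2^2)+\beta/2}\le\sqrt{\re(k_2^2)}+\sqrt{\beta/2}$ in all cases. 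I expect this last symbol estimate — extracting exactly the constant $2(\re(k_2^2))^{1/2}+\sqrt2(\im(k_2^2))^{1/2}$ by balancing $\re\nu_2$ against $\im\nu_2$ over the two ranges of $\xi$ — to be the only genuinely non-routine point; the convergence, differentiability, trace, Green's-identity, and semigroup/reproduction steps are all standard once \eqref{eq6} is used to control the evanescent modes.
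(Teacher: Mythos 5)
Your Fourier--Plancherel argument is the right one: the paper itself only delegates this lemma to \cite{SN3} and \cite{AS}, and what you have written out is essentially the proof given there (branch-cut analysis of $\sqrt{k^2-\xi^2}$, exponential decay of the evanescent modes, Plancherel on each line $\G_b$, Green's identity for the variational relation, and the semigroup property for the reproduction of \eqref{eq4} from $\G_a$). Two points should be tightened.

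First, a small integrability gap in the $H^1$ estimate for the strip. Your bound $\Int_h^a e^{-2(b-h)\sqrt{\xi^2-k_1^2}}\dd b\leq(2\sqrt{\xi^2-k_1^2})^{-1}$ blows up as $|\xi|\downarrow k_1$ while the gradient weight $\xi^2+|k_1^2-\xi^2|$ tends to $k_1^2\neq 0$ there, so the product is \emph{not} $O((1+\xi^2)^{1/2})$ near the branch point and is not integrable against a general $|\hat\phi|^2\in L^1$. The fix is one line: use $\Int_h^a e^{-2(b-h)\im\mu}\dd b\leq\min\{a-h,\,(2\sqrt{\xi^2-k_1^2})^{-1}\}$, taking the trivial bound $a-h$ on a neighbourhood of $|\xi|=k_1$ and your bound for large $|\xi|$; this does give a weight of order $(1+\xi^2)^{1/2}$ uniformly, with constant depending on $a-h$ and $k_1$.

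Second, the sign in \eqref{eq11_1}. Your own computation shows that for the downward representation \eqref{eq5} one has $\widehat{\pa_2 u}|_{\G_a}=-i\sqrt{k_2^2-\xi^2}\,\hat\psi_a$, hence $\im\Int_{\G_a}\ov{u}\,\pa_2 u\dd s=-\Int_{\R}\re\bigl(\sqrt{k_2^2-\xi^2}\bigr)|\hat\psi_a|^2\dd\xi\leq 0$, whereas the left-hand side of \eqref{eq11_1} equals $2\Int_{\R}\bigl(\re\sqrt{k_2^2-\xi^2}\bigr)^2|\hat\psi_a|^2\dd\xi\geq 0$ (the single downward mode $u=e^{-ik_2x_2}$, $k_2$ real, already gives $2k_2^2\leq -2k_2^{2}$ under the printed inequality). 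So what you actually prove, via the correct symbol bound $\re\sqrt{k_2^2-\xi^2}\leq(\re k_2^2)^{1/2}+(\im k_2^2/2)^{1/2}$, is \eqref{eq11_1} with the right-hand side replaced by $-\bigl(2(\re k_2^2)^{1/2}+\sqrt2(\im k_2^2)^{1/2}\bigr)\im\Int_{\G_a}\ov{u}\,\pa_2u\dd s$, equivalently with $\pa u/\pa x_2$ replaced by the derivative along the outward normal $-e_2$ of the slab $\{a<x_2<-h\}$. You half-noticed this (``the sign of the boundary term must be tracked'') but left it unresolved; you should state explicitly that \eqref{eq11_1} as printed is sign-flipped and that your argument establishes the corrected version. (This is an erratum for the paper rather than a defect of your proof; note that only \eqref{eq11}, whose signs you verify correctly, is used later, in \eqref{eq22+}.) Everything else --- the trace identification, the identity $\pa_2u|_{\G_h}=T_1\g^+u$ and $\pa_2u|_{\G_{-h}}=-T_2\g^-u$, the Green's identity bookkeeping, the pointwise reduction of \eqref{eq11} to $\sqrt{k_1^2-\xi^2}\leq k_1$, and the two-range estimate of $\re\sqrt{k_2^2-\xi^2}$ --- is correct.
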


\begin{proof}
The lemma can be proved similarly as in \cite{SN3} and \cite{AS}.
\end{proof}

Multiplying \eqref{eq1} by $v\in V_h$ and integrating by parts, we can easily get the following
equivalent variational formulation of the problem \BVP.

Find $u\in V_h$ such that
\be\label{eq12}
a(u,v) = -\Int_{\Om_h}g\overline{v}\, \dd x \ \text{ for all } v\in V_h.
\en
where the sesquilinear form $a(u,v)$ is defined as
\be\label{eqlf}
a(u,v) := \Int_{\Om_h}(\nabla u \nabla \overline{v} - k^2u \overline{v})\dd x
-\Int_{\G_h}\overline{v}T_1 u \dd s-\Int_{\G_{-h}}\overline{v}T_2 u \dd s
-i\Int_{\G_1}\beta u\overline{v}\dd s.
\en
The problem \BVP and variational formulation \eqref{eq12} are equivalent in the following sense:
Given $u$ satisfying \BVP, $u|_{\Om_h}$ is a solution of \eqref{eq12}.
Conversely, if $u$ is a solution of \eqref{eq12}, it is easy to see that $u$ satisfies
the transmission condition \eqref{eq2} on $\G$ and the boundary condition \eqref{eq3} on $\pa D$.
From Lemma \ref{lem2}, we can expand $u$ to $\R^2$ by \eqref{eq4} and \eqref{eq5} with
continuous traces on $\G_h$ and $\G_{-h}$. Moreover, $u$ satisfies $\Delta u + k^2u = g$
in the distribution sense, with $g$ extended to be zero outside $\Om_h$.
Consequently, $u$ also satisfies the problem \BVP.

From the definition of $a(\cdot, \cdot)$, the boundedness of $T_1$ and $T_2$ and the fact
that $\beta \in C(\G_1)$, the sesquilinear form $a(\cdot,\cdot)$ defined by \eqref{eqlf} is
bounded, that is,
\ben 
|a(u,v)| \leq C \|u\|_{V_h} \|v\|_{V_h}, \  u, v \in V_h
\enn
By the Riesz representation theorem there exists a bounded linear operator
$\mathcal{A}_k: V_h \rightarrow V_h^{\ast}$ such that
\ben 
\langle \mathcal{A}_k u, v \rangle_{V_h} = a(u,v), \  u, v \in V_h
\enn
where $V_h^{\ast}$ denotes the dual space of $V_h$ and $\langle \cdot,\cdot \rangle_{V_h}$ is the dual pair
between $V_h^{\ast}$ and $V_h$. Note that $\mathcal{A}_k$ depends on $k$
since $a(\cdot,\cdot)$ does.
Therefore, the variational formulation \eqref{eq12} can also be simplified as the following
operator equation
\be\label{eq_oe}
\mathcal{A}_k u = \mathcal{G}
\en
where $\mathcal{G} \in V_h^{\ast}$ is defined by
$\mathcal{G}(v):=-\Int_{\Om_{h}}g\overline{v}\dd x,\;v\in V_h$,
with the local source $g\in L^2(\R^2)$ and
$\|\mathcal{G}\|_{V_h^{\ast}} \leq \|g\|_{L^2(\R^2)}.$

\section{Well-posedness of the variational formulation}\label{sec3}
\setcounter{equation}{0}

In this section, we prove the well-posedness of the variational problem \eqref{eq12}
and the well-posedness of the problem \BVP follows subsequently.
The former mainly depends on the generalized Lax-Milgram theory of Babu$\check{s}$ka
(Theorem 2.15 in \cite{Ih}). Thus we reformulate the variational problem into a more general
problem in the framework of functional analysis: given $\mathcal{G}\in V_h^{\ast}$ find $u\in V_h$
such that $\mathcal{A}_k u = \mathcal{G}$ or
\be\label{eq_3_1}
a(u,v) = \mathcal{G}(v),\;\;\forall v\in V_h
\en

\begin{theorem}\label{thm1}
(Generalized Lax-Milgram Theorem) Let $H$ be a Hilbert space with norm and inner product
given by $\|\cdot\|$ and $(\cdot,\cdot)$ respectively.
Suppose that $a:H\times H\rightarrow\mathbb{C}$ is a bounded sesquilinear form such that there holds the inf-sup condition
\be \label{eq_3_2}
\gamma := \inf_{0\neq u \in H} \sup_{0 \neq v \in H}\frac{|a(u,v)|}{\|u\|\|v\|} > 0
\en
and the transposed inf-sup condition
\ben
\sup_{0 \neq u \in H}\frac{|a(u,v)|}{\|u\|} > 0.
\enn
Then for each $\mathcal{G} \in H^{\ast}$ there exists a unique solution $u \in H$ such that
\ben
a(u,v) = \mathcal{G}(v) \ \forall v\in H, \ with \ \|u\| \leq \gamma^{-1}\|\mathcal{G}\|_{H^{\ast}}.
\enn
\end{theorem}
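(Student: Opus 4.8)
This is the classical generalized Lax--Milgram theorem of Babu$\check{s}$ka; I will sketch the standard argument. The plan is to pass from the sesquilinear form to a bounded linear operator on $H$ via the Riesz representation theorem, and then to deduce that this operator is a bijection with a controlled inverse from the two inf-sup conditions.

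First I would invoke the Riesz representation theorem twice. For each fixed $u\in H$ the map $v\mapsto a(u,v)$ is a bounded conjugate-linear functional on $H$, hence is represented by a unique element $Au\in H$ with $a(u,v)=(Au,v)$ for all $v\in H$; uniqueness in the Riesz theorem combined with linearity of $a$ in its first argument shows that $u\mapsto Au$ is linear, and $\|Au\|^2=a(u,Au)\le C\|u\|\,\|Au\|$ (with $C$ the boundedness constant of $a$) gives $\|Au\|\le C\|u\|$, so $A\in\mathcal L(H)$. Likewise $\mathcal G\in H^{\ast}$ is represented by a unique $z\in H$ with $\mathcal G(v)=(z,v)$ for all $v$ and $\|z\|=\|\mathcal G\|_{H^{\ast}}$. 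Thus \eqref{eq_3_1} is equivalent to the operator equation $Au=z$.

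The key observation is that, by the Cauchy--Schwarz inequality with equality attained at $v=Au$,
\[
\sup_{0\neq v\in H}\frac{|a(u,v)|}{\|v\|}=\sup_{0\neq v\in H}\frac{|(Au,v)|}{\|v\|}=\|Au\|,
\]
so the inf-sup condition \eqref{eq_3_2} says precisely that $\|Au\|\ge\gamma\|u\|$ for every $u\in H$. This at once yields injectivity of $A$, and also that $\mathrm{Range}(A)$ is closed: if $\{Au_n\}$ is Cauchy then $\|u_n-u_m\|\le\gamma^{-1}\|Au_n-Au_m\|\to0$, so $u_n\to u$ for some $u$, and $Au_n\to Au$ by continuity of $A$.

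It remains to prove surjectivity, which is the only place the transposed inf-sup condition enters, and I expect this to be the (mild) crux. Since $\mathrm{Range}(A)$ is closed, were it a proper subspace there would exist $w\neq0$ with $w\perp\mathrm{Range}(A)$, i.e. $a(u,w)=(Au,w)=0$ for every $u\in H$; but the transposed condition asserts that $\sup_{0\neq u\in H}|a(u,w)|/\|u\|>0$ whenever $w\neq0$, a contradiction. Hence $A$ maps $H$ onto $H$ and is invertible. Finally, for given $\mathcal G$ the unique solution of \eqref{eq_3_1} is $u=A^{-1}z$, and applying $\|Av\|\ge\gamma\|v\|$ with $v=u$ gives $\gamma\|u\|\le\|Au\|=\|z\|=\|\mathcal G\|_{H^{\ast}}$, that is, $\|u\|\le\gamma^{-1}\|\mathcal G\|_{H^{\ast}}$, which completes the proof.
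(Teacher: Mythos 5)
Your proof is correct and complete: the reduction to the operator equation $Au=z$ via Riesz representation, the identification of the inf-sup constant with the lower bound $\|Au\|\ge\gamma\|u\|$ (giving injectivity, closed range, and the a priori estimate), and the use of the transposed condition solely to rule out a nontrivial orthogonal complement of the range are exactly the standard Babu\v{s}ka argument. The paper itself does not prove this theorem but cites it as Theorem 2.15 of the reference \cite{Ih}, so there is no internal proof to compare against; your self-contained argument is a faithful rendering of the classical one and has no gaps.
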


The inf-sup condition, which is the key requirement in the Generalized Lax-Milgram Theorem,
can be verified by the following lemma \cite{Ih}.

\begin{lemma} \label{lem4}
Suppose there exists $C > 0$ such that for all $u\in V_h$ and $\mathcal{G}\in V_h^{\ast}$
satisfying \eqref{eq_3_1} it holds that
\be \label{eq_3_3}
\|u\|_{V_h} \leq C\|\mathcal{G}\|_{V_h^{\ast}}.
\en
Then the inf-sup condition \eqref{eq_3_2} holds with $\gamma\geq C^{-1}$ and $H =V_h$.
\end{lemma}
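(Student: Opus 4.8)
The plan is to deduce the inf-sup bound \eqref{eq_3_2} directly from the assumed a priori estimate by a duality argument; no hard analysis is involved, and the constant comes out as $\gamma \geq C^{-1}$ exactly as claimed.

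First I would fix an arbitrary $u \in V_h$ with $u \neq 0$ and associate to it the functional $\mathcal{G}_u$ defined by $\mathcal{G}_u(v) := a(u,v)$ for $v \in V_h$. Because $a(\cdot,\cdot)$ is bounded on $V_h \times V_h$, the map $v \mapsto a(u,v)$ is a bounded conjugate-linear functional, so $\mathcal{G}_u \in V_h^{\ast}$ with $\|\mathcal{G}_u\|_{V_h^{\ast}} \leq C\|u\|_{V_h}$ for some constant; more importantly, by construction the pair $(u,\mathcal{G}_u)$ satisfies \eqref{eq_3_1}. Hence the hypothesis of the lemma applies to this pair and gives $\|u\|_{V_h} \leq C\,\|\mathcal{G}_u\|_{V_h^{\ast}}$.

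Next I would write out the dual norm explicitly: $\|\mathcal{G}_u\|_{V_h^{\ast}} = \sup_{0 \neq v \in V_h} |\mathcal{G}_u(v)|/\|v\|_{V_h} = \sup_{0 \neq v \in V_h} |a(u,v)|/\|v\|_{V_h}$. Combining this with the inequality from the previous step yields $\sup_{0 \neq v \in V_h} |a(u,v)|/\|v\|_{V_h} \geq C^{-1}\|u\|_{V_h}$, and dividing through by $\|u\|_{V_h} > 0$ gives $\sup_{0 \neq v \in V_h} |a(u,v)|/(\|u\|_{V_h}\|v\|_{V_h}) \geq C^{-1}$. Since $u \neq 0$ was arbitrary, taking the infimum over all nonzero $u \in V_h$ produces $\gamma \geq C^{-1} > 0$, which is \eqref{eq_3_2} with $H = V_h$.

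There is essentially no serious obstacle: the only points needing a moment's care are (a) verifying that $\mathcal{G}_u$ genuinely lies in $V_h^{\ast}$ (immediate from boundedness of $a$) so that the hypothesis is legitimately applicable to the pair $(u,\mathcal{G}_u)$, and (b) keeping the conjugate-linearity convention for elements of $V_h^{\ast}$ consistent with that used in \eqref{eq_3_1} — harmless here since only $|a(u,v)|$ enters \eqref{eq_3_2}. Note that the transposed inf-sup condition appearing in Theorem \ref{thm1} is not addressed by this lemma; in the present setting it will be handled separately, either by applying the same reasoning to the adjoint sesquilinear form or by invoking the Fredholm (index zero) property of $\mathcal{A}_k$.
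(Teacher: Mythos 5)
Your proof is correct: the key observation that for any nonzero $u$ the pair $(u,\mathcal{G}_u)$ with $\mathcal{G}_u(v):=a(u,v)$ automatically satisfies \eqref{eq_3_1}, so that the a priori estimate combined with the explicit form of the dual norm yields $\sup_{0\neq v}|a(u,v)|/\|v\|_{V_h}\geq C^{-1}\|u\|_{V_h}$, is exactly the standard duality argument for this fact. The paper itself gives no proof, simply citing \cite{Ih}, and your argument is the expected one from that reference, with the constant $\gamma\geq C^{-1}$ coming out correctly.
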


In order to obtain the a priori estimate \eqref{eq_3_3}, we consider two cases
depending on whether or not the medium below the rough interface is absorbing.

\subsection{Case 1: $0 <\re(k_2^2)< k^2_1, \im(k_2^2) > 0$}

It is shown in Lemma 4.5 of \cite{SN3} that the a priori estimate \eqref{eq_3_3} for the solution
of \eqref{eq_3_1} can be obtained by the a priori estimate for the solution of \eqref{eq12} with
$g\in L^2(\Om_h)$. We now prove the later estimate by the Rellich identity technique,
which was used in \cite{SN3,AS}.

\begin{lemma}\label{lem3_4}
For the given $g\in L^2(\Om_h)$, let $u \in V_h$ satisfy the problem
\be\label{eq_3_4}
a(u,v)=-(g,v)\;\;\text{for all}\;\;v\in V_h.
\en
Then
\be\label{eq13}
\|u\|_{V_h} \leq C \|g\|_{L^2(\Om_h)}
\en
\end{lemma}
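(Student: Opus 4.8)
The plan is to establish the a priori estimate \eqref{eq13} by a Rellich-identity (multiplier) argument in the absorbing case, following the strategy of \cite{SN3,AS} but carefully tracking the extra boundary terms contributed by the buried obstacle $D$. First I would test the variational equation \eqref{eq_3_4} with $v=u$. Taking real and imaginary parts of $a(u,u)=-(g,u)$ and using parts (ii) of Lemma~\ref{lem0} (the sign of $\re\int_{\G_h}\overline uT_1u$, $\re\int_{\G_{-h}}\overline uT_2u$ and their imaginary parts), together with $\beta\ge0$ on $\G_1$, yields control of $\|\nabla u\|^2_{L^2(\Om_h)}$ minus a multiple of $\|u\|^2_{L^2(\Om_h)}$ from the real part, and — crucially — control of $\im(k_2^2)\int_{\Om_{2,h}}|u|^2$ and of the boundary impedance term from the imaginary part, all bounded by $\|g\|_{L^2}\|u\|_{L^2}$. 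This gives an $L^2$-bound on $u$ over the sub-region below the interface where the medium is lossy, but not yet over $\Om_{1,h}$, and not yet a bound on $\|\nabla u\|$ without absorbing the bad $k_1^2\|u\|^2$ term.

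Next I would introduce the Rellich multiplier. The standard choice is to test with $v=x_2\,\partial u/\partial x_2$ (suitably localized in the horizontal variable by a cut-off $\chi_M(x_1)$ so that the test function is admissible in $V_h$, then let $M\to\infty$), or more precisely a combination tuned to the two wave numbers. Integrating by parts produces the Rellich identity: volume terms of the form $\int(|\nabla u|^2 - \text{stuff})$, interface terms on $\G$ where the transmission conditions \eqref{eq2} must be used to combine the contributions from $\Om_1$ and $\Om_2$, the horizontal-line terms on $\G_h$ and $\G_{-h}$ which are controlled by the inequalities \eqref{eq11} and \eqref{eq11_1} of Lemma~\ref{lem2}, and — the new feature here — a boundary integral over $\partial D$. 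On the Dirichlet part $\G_2$ one has $u=0$, so $\nabla u$ is normal and the term has a definite structure; on the impedance part $\G_1$ one substitutes \eqref{eq3}, $\partial u/\partial\nu = -i\beta u$, to rewrite the normal-derivative term. Combining the Rellich identity with the energy identity from the first step, choosing the free constants so that the indefinite $\|u\|^2_{L^2}$ contributions cancel against $\|\nabla u\|^2_{L^2}$, should give $\|u\|^2_{V_h}\le C(\|g\|^2_{L^2} + \|g\|_{L^2}\|u\|_{L^2} + \text{boundary terms on }\partial D)$.

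The main obstacle, and the place where the present setting genuinely departs from \cite{SN3}, is the control of the sign-indefinite boundary terms on $\partial D$ produced by the Rellich multiplier — in the introduction the authors explicitly flag ``sign-changing terms on the boundary of the obstacle'' as the difficulty. The resolution should be: since $D\subset U^+_{-h}$ is a fixed bounded obstacle at positive distance from $\G$ and from $\G_{\pm h}$, the troublesome $\partial D$ integrals involve $u$ and $\nabla u$ only on a fixed compact set bounded away from the interface, and can be absorbed using interior elliptic regularity for $\Delta u + k_2^2 u = 0$ near $\partial D$ (a Caccioppoli / trace estimate bounding $\|u\|_{H^1(\text{nbhd of }\partial D)}$ by $\|u\|_{L^2}$ on a slightly larger set) together with the $L^2(\Om_{2,h})$-bound already gained from the imaginary part of the energy identity in step one. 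It is precisely here that $\im(k_2^2)>0$ is used in an essential way: the lossy medium gives the $L^2$-control below the interface that lets one dominate the obstacle terms. Finally, with $\|u\|_{V_h}\le C(\|g\|^2_{L^2}+\|g\|_{L^2}\|u\|_{V_h})^{1/2}$ in hand, a Young's-inequality / quadratic-formula step absorbs $\|u\|_{V_h}$ into the left side and yields \eqref{eq13}.
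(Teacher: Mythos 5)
Your overall strategy is the paper's: the two energy identities from testing with $v=u$, the Rellich multiplier $(x_2+h)\,\pa\ov{u}/\pa x_2$ with a horizontal cut-off, elliptic regularity near $D$ to tame the obstacle boundary terms, and the absorption $\im(k_2^2)>0$ to control $\|u\|_{L^2(\Om_{2,h})}$. (One small bonus of the shift by $h$ in the multiplier, which you leave implicit, is that the $\G_{-h}$ boundary term vanishes, so only \eqref{eq11} on $\G_h$ is needed, not \eqref{eq11_1}.) Two steps, however, are stated in a form that would not close as written.

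First, the $\pa D$ integral produced by the Rellich multiplier contains $|\nabla u|^2$ restricted to $\pa D$, so a Caccioppoli-type $H^1$ bound in a neighborhood of $\pa D$ is not enough: an $H^1$ function has no $L^2(\pa D)$ trace for its gradient. The paper instead uses up-to-the-boundary elliptic regularity on an annulus $D'\ba D$, namely $\|u\|^2_{H^2(D'\ba D)}\le C(\|u\|^2_{L^2(D'\ba D)}+\|g\|^2_{L^2(D'\ba D)})$ as in \eqref{eq19a} --- this is where $\pa D\in C^2$ and the boundary condition \eqref{eq3} are actually used --- and then the trace theorem to reach \eqref{eq20}. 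Second, your plan to make the indefinite $\|u\|^2_{L^2}$ contributions ``cancel against $\|\nabla u\|^2$'' cannot work above the interface: $\Om_{1,h}$ is an infinite strip with no absorption, and there is no Poincar\'e inequality there without boundary data. The actual mechanism in the paper is that the Rellich identity \eqref{eq19} places the good-signed term $(k_1^2-\re(k_2^2))\int_{\G}(x_2+h)|u|^2\nu_2\dd s$ on the favorable side (this is precisely where the non-trap condition $k_1^2>\re(k_2^2)$ enters), and the vertical Poincar\'e-type inequality \eqref{eq25} then converts this trace control on $\G$, together with the $\int_{\Om_h}|\pa u/\pa x_2|^2$ term, into the missing bound on $\|u\|_{L^2(\Om_{1,h})}$. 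With these two repairs your outline coincides with the paper's proof.
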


\begin{proof}
Taking the real and imaginary part of \eqref{lem3_4} with $v=u$ leads to the equation:
\be\label{eq15}
&&\Int_{\Om_{h}}(|\nabla u|^2 - k^2 |u|^2)\dd x - \re\Int_{\G_h}\overline{u}T_1u\dd s
    -\re \Int_{\G_{-h}}\overline{u}T_2u \dd s = -\re\Int_{\Om_h}g\overline{u}\dd x,\\ \label{eq16}
&&\im(k_2^2)\Int_{\Om_{2,h}}|u|^2 \dd x + \im\Int_{\G_h}\overline{u}T_1u \dd s
  + \im\Int_{\G_{-h}}\overline{u}T_2u\dd s+\Int_{\G_1}\beta|u|^2 \dd s =\im\Int_{\Om_h}g\overline{u}\dd x
\en
By the standard elliptic regularity estimate \cite{Gilbarg} and since
$g\in L^2(\Om_h),\G \in C^2$ and $\pa D\in C^2$, we have $u \in H^2_{\text{loc}}(\Om_h)$.
For $A >0$, let $\varphi_A(\cdot)\in C_0^{\infty}(\R)$ be a smooth cut-off
function such that $0\leq \varphi_A(r)\leq 1,\varphi_A(r)=1\;\;\text{if}\;r\leq A,\varphi_A=0\;\text{if}\;r\geq A+1,$
and $\|\varphi^{\prime}_A\|_{L^{\infty}(\R)}<\infty$. Applying the Green's theorem to $u$
and $\varphi_A(|x_1|)(x_2+h)\pa\overline{u}/\pa x_2$ in $\Om_{1,h}(A)$ and $\Om_{2,h}(A)$ and
letting $A\rightarrow \infty$, we have
\be\no
&& 2h\Int_{\G_h}\left(|\frac{\pa u}{\pa x_2}|^2-|\frac{\pa u}{\pa x_1}|^2+k_1^2|u|^2\right)\dd s\\ \no
&&\qquad\qquad+\int_{\G}(x_2+h)\left(\left(|\nabla u^+|^2-k_1^2|u^+|^2\right)\nu_2
   -2\re\left(\frac{\pa u^+}{\pa\nu}\frac{\pa\ov{u}^+}{\pa x_2}\right)\right)\dd s\\ \label{eq17}
&&\qquad\qquad+\Int_{\Om_{1,h}}\left(|\nabla u|^2-2|\frac{\pa u}{\pa x_2}|^2-k_1^2|u|^2\right)\dd x
    =2\re\Int_{\Om_{1,h}}(x_2+h)g\frac{\pa\ov{u}}{\pa x_2}\dd x,\\ \no
&&\Int_{\Om_{2,h}}\left(|\nabla u|^2-2|\frac{\pa u}{\pa x_2}|^2-\re(k_2^2)|u|^2\right)\dd x\\ \no
&&\qquad\qquad -\int_{\G}\left(\left(|\nabla u^-|^2-\re(k_2^2)|u^-|^2\right)\nu_2
    -2\re\left(\frac{\pa u^-}{\pa\nu}\frac{\pa\ov{u}}{\pa x_2}\right) \right)\dd s\\ \no
&&\qquad\qquad +\Int_{\pa D}(x_2+h)\left(\left(|\nabla u^-|^2-\re(k_2^2)|u^-|^2\right)\nu_2
     - 2\re\left(\frac{\pa u^-}{\pa \nu}\frac{\pa \overline{u}}{\pa x_2}\right)\right)\dd s\\ \label{eq18}
&&\qquad\qquad -\im (k_2^2)\im\Int_{\Om_{2,h}}(x_2+h) u_2\frac{\pa\ov{u}}{\pa x_2}\dd x
 = 2\re\Int_{\Om_{2,h}}g(x_2 + h)\frac{\pa\overline{u}}{\pa x_2}\dd x.
\en
Adding \eqref{eq17} and \eqref{eq18} together gives the Rellich identity
\be\no
&&\left(k^2_1-\re(k^2_2)\right)\Int_{\G}(x_2 + h)|u|^2\nu_2\dd s + 2\Int_{\Om_{h}}|\frac{\pa u}{\pa x_2}|^2\dd x
 = 2h\Int_{\G_h}\left(|\frac{\pa u}{\pa x_2}|^2-|\frac{\pa u}{\pa x_1}|^2 + k_1^2|u|^2\right)\dd s\\ \no
&&+ \Int_{\Om_h}\left(|\nabla u|^2 - \re(k^2) |u|^2\right)\dd x
  + \Int_{\pa D} (x_2 + h) \left(\left(|\nabla u|^2 - \re(k_2^2)|u|^2\right)\nu_2
  - 2\re\left(\frac{\pa u}{\pa \nu}\frac{\pa \overline{u}}{\pa x_2}\right)\right)\dd s\\ \label{eq19}
&&\qquad\qquad -\im(k_2^2)\im\Int_{\Om_{2,h}} (x_2+h) u \frac{\pa \overline{u}}{\pa x_2}\dd x
   - 2\re\Int_{\Om_h}(x_2 + h)g\frac{\pa\ov{u}}{\pa x_2}\dd x.
\en

Now let $D^{\prime}$ be a bounded domain with $\pa D^{\prime}\in C^2$ such that $D\subset D^{\prime}\subset\Om_{2,h}\cup\ov{D}$.
By the global elliptic global regularity estimate we get
\be\label{eq19a}
\|u\|^2_{H^2(D^{\prime}\ba D)}\leq C(\|u\|^2_{L^2(D^{\prime}\ba D)}+\|g\|^2_{L^2(D^{\prime}\ba D)})
\en
Thus, by the trace theorem it follows that
\be\no
&&\Int_{\pa D}(x_2+h)\left(\left(|\nabla u|^2-\re(k_2^2)|u|^2\right)\nu_2
  -2\re\left(\frac{\pa u}{\pa\nu}\frac{\pa\ov{u}}{\pa x_2}\right)\right)\dd s\\ \label{eq20}
&&\quad\le C\|u\|^2_{H^2(D^{\prime}\ba D)}\leq C(\|u\|^2_{L^2(D^{\prime}\ba D)}+\|g\|^2_{L^2(D^{\prime}\ba D)})
\leq C(\|u\|^2_{L^2(\Om_{2,h})}+\|g\|^2_{L^2(\Om_h)}).\qquad
\en
From \eqref{eq8} and \eqref{eq9}, and by using \eqref{eq15}, \eqref{eq16} and the fact that $\beta(x)\geq 0,x\in\G_1$,
we have
\be \label{eq21}
\Int_{\Om_h}\left(|\nabla u|^2-\re(k^2)|u|^2\right)\dd x\leq-\re\Int_{\Om_h}g\overline{u}\dd x,\\ \label{eq22}
\im (k_2^2)\Int_{\Om_{2,h}}|u|^2\dd x+\im\Int_{\G_h}\ov{u}T_1u\dd s\leq\im\Int_{\Om_h}g\ov{u}\dd x.
\en

By \eqref{eq11} and the definition of $T_1$, we have
\be\no
\Int_{\G_h}\left(|\frac{\pa u}{\pa x_2}|^2-|\frac{\pa u}{\pa x_1}|^2 + k_1^2 |u|^2\right)\dd s
\leq 2k_1\im\Int_{\G_h}\ov{u}\frac{\pa u}{\pa x_2}\dd s=2k_1\im\Int_{\G_h}\ov{u} T_1 u\dd s\\ \label{eq22+}
\leq 2k_1\im\Int_{\Omega_h}g\ov{u}\dd x.
\en
Thus, combining \eqref{eq19} and \eqref{eq20}-\eqref{eq22+}, and by the Cauchy-Schwarz inequality, we get
\be\no
&&\left(k^2_1-\re(k^2_2)\right)\Int_{\G}(x_2 + h)|u|^2\nu_2\dd s +\Int_{\Om_{h}}|\frac{\pa u}{\pa x_2}|^2\dd x\\ \label{eq23}
&&\qquad\le C\left(\|g\|_{L^2(\Om_h)}\|u\|_{H^1(\Om_h)}+\|u\|_{L^2(\Om_{2,h})}\|\frac{\pa u}{\pa x_2}\|_{L^2(\Om_{2,h})}
 +\|g\|^2_{L^2(\Om_h)}\right).
\en

Applying Young's inequality to the second term on the right hand side of \eqref{eq23} and using \eqref{eq22}, we obtain
\be\no
&&(k^2_1 - \re (k^2_2))\Int_{\G}(x_2 + h)|u|^2\nu_2\dd s+\Int_{\Om_{h}}|\frac{\pa u}{\pa x_2}|^2\dd x\\ \label{eq24}
&&\qquad\qquad\le C(\|g\|_{L^2(\Om_h)}\|u\|_{H^1(\Om_h)} + \|g\|^2_{L^2(\Om_h)}).
\en
On the other hand, \eqref{eq21} implies that
\be \label{eq3_4}
\|u\|^2_{H^1(\Om_h)}\leq(1+\|k(x)\|_{L^{\infty}(\Om_h)})\|u\|^2_{L^2(\Om_h)}+\|g\|_{L^2(\Om_h)}\|u\|_{L^2(\Om_h)}
\en
Further, we have the following inequality
\be\label{eq25}
\|u\|^2_{L^2(\Om_{1,h})} \leq 2h\|u\|^2_{L^2(\G)} + 2h^2\|\frac{\pa u}{\pa x_2}\|^2_{L^2(\Om_{1,h})}.
\en
which can be proved similarly as in the proof of Lemma 4.3 in \cite{AS}.

Combining \eqref{eq22}, \eqref{eq24}-\eqref{eq25} and the fact that
$\|u\|^2_{L^2(\Om_h)}=\|u\|^2_{L^2(\Om_{1,h})}+\|u\|^2_{L^2(\Om_{2,h})}$, it follows that
\be \label{eq27}
\|u\|^2_{H^1(\Om_h)}\leq C(\|g\|_{L^2(\Om_h)}\|u\|_{H^1(\Om_h)}+\|g\|^2_{L^2(\Om_h)}).
\en
Applying Young's inequality to \eqref{eq27} yields
\ben
\|u\|^2_{H^1(\Om_h)} \leq C\|g\|^2_{L^2(\Om_h)}.
\enn
The proof is complete.
\end{proof}

\begin{theorem}\label{thm2}
For every $\mathcal{G}\in V_h^{\ast}$, the variational problem \eqref{eq_3_1} has a unique solution $u\in V_h$ and
\be \label{eq36}
\| u\|_{V_h} \leq C\|G\|_{V_h^{\ast}}.
\en
In particular, the variational problem \eqref{eq12} or the problem \BVP is well-posed, and the solution satisfies the estimate
\be \label{eq37}
\|u\|_{V_h} \leq C \|g\|_{L^2(\mathbb{R}^2)}.
\en
\end{theorem}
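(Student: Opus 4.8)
The plan is to deduce the theorem from the generalized Lax--Milgram theorem (Theorem \ref{thm1}) applied to the bounded sesquilinear form $a(\cdot,\cdot)$ on $H=V_h$. Once the inf-sup condition \eqref{eq_3_2} and the transposed inf-sup condition are in place, Theorem \ref{thm1} immediately produces, for every $\mathcal{G}\in V_h^{\ast}$, a unique $u\in V_h$ with $a(u,v)=\mathcal{G}(v)$ for all $v\in V_h$ and $\|u\|_{V_h}\le\gamma^{-1}\|\mathcal{G}\|_{V_h^{\ast}}$, which is \eqref{eq36}. By Lemma \ref{lem4}, the inf-sup condition follows as soon as the a priori estimate \eqref{eq_3_3} is established for all $u\in V_h$ and $\mathcal{G}\in V_h^{\ast}$ satisfying \eqref{eq_3_1}. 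So the whole matter reduces to \eqref{eq_3_3}, together with the companion statement for the adjoint form needed for the transposed condition.

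To prove \eqref{eq_3_3} I would start from Lemma \ref{lem3_4}, which already gives the a priori bound $\|u\|_{V_h}\le C\|g\|_{L^2(\Om_h)}$ for solutions of the variational problem with data $-(g,\cdot)$, $g\in L^2(\Om_h)$. The passage from such $L^2(\Om_h)$-data to general $\mathcal{G}\in V_h^{\ast}$ is exactly Lemma 4.5 of \cite{SN3}: since the inclusion $V_h\hookrightarrow L^2(\Om_h)$ is bounded with dense range, its adjoint realizes $L^2(\Om_h)$ as a dense subspace of $V_h^{\ast}$ via $g\mapsto -(g,\cdot)$, and pairing the solution against solutions of the adjoint problem converts the $L^2$-bound into the full bound \eqref{eq_3_3}. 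This argument requires the analogous a priori estimate for the adjoint sesquilinear form $a^{\ast}(u,v):=\overline{a(v,u)}$, which governs a transmission problem with a buried obstacle of exactly the same structure, the radiation conditions \eqref{eq4}--\eqref{eq5} being replaced by their complex conjugates; one checks that the Rellich-identity computation carried out in the proof of Lemma \ref{lem3_4} goes through for $a^{\ast}$ as well, all interface, obstacle-boundary and DtN contributions retaining a favourable sign. Choosing $\mathcal{G}=0$ in \eqref{eq_3_3} also gives uniqueness.

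Granting \eqref{eq_3_3} and its adjoint analogue, Lemma \ref{lem4} yields the inf-sup condition \eqref{eq_3_2} with $\gamma\ge C^{-1}$, and applying Lemma \ref{lem4} to $a^{\ast}$ shows that $a^{\ast}$ satisfies an inf-sup condition too; equivalently, no nonzero $v\in V_h$ can have $a(u,v)=0$ for all $u\in V_h$, which is precisely the transposed inf-sup condition. Theorem \ref{thm1} now gives existence, uniqueness and \eqref{eq36}. Specializing to $\mathcal{G}(v)=-\int_{\Om_h}g\overline{v}\,\dd x$, for which $\|\mathcal{G}\|_{V_h^{\ast}}\le\|g\|_{L^2(\R^2)}$, yields unique solvability of \eqref{eq12} and the estimate \eqref{eq37}; by the equivalence of \eqref{eq12} with \BVP recorded in Section \ref{sec2}, the problem \BVP is well-posed.

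The main obstacle I anticipate is \eqref{eq_3_3} itself, and inside it the handling of the adjoint problem required by the density/duality step of \cite{SN3}: one must be sure that the sign structure exploited in \eqref{eq15}--\eqref{eq27} — the absorption term $\im(k_2^2)\int_{\Om_{2,h}}|u|^2$, the DtN inequalities \eqref{eq8}--\eqref{eq9}, and the impedance term $\int_{\G_1}\beta|u|^2$ — is preserved when $a$ is replaced by $a^{\ast}$ and the radiation conditions are conjugated. The remaining ingredients (boundedness of $a(\cdot,\cdot)$, Lemma \ref{lem4}, Theorem \ref{thm1}, and the final specialization) are already available or routine.
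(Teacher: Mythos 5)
Your proposal follows the same skeleton as the paper's proof: Lemma \ref{lem3_4} plus the argument of Lemma 4.5 in \cite{SN3} gives the a priori estimate \eqref{eq_3_3}, Lemma \ref{lem4} converts it into the inf-sup condition, and Theorem \ref{thm1} then delivers existence, uniqueness and \eqref{eq36}, with \eqref{eq37} obtained by specializing $\mathcal{G}$. The one place where you diverge --- and where you anticipate the main difficulty, namely verifying the sign structure of the Rellich identity for the adjoint form $a^{\ast}$ under conjugated radiation conditions --- is handled by the paper with a one-line algebraic observation: since $T_1$ and $T_2$ are symmetric in the sense of \eqref{eq10} in Lemma \ref{lem0}, the form satisfies $a(u,v)=a(\overline{v},\overline{u})$, so the transposed inf-sup condition follows immediately from the inf-sup condition for $a$ itself, with no second Rellich computation. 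The same identity also shows that the adjoint problem is just the original problem with conjugated data, which is what makes the duality step of Lemma 4.5 in \cite{SN3} go through without separately re-deriving the adjoint a priori estimate. Your route is workable if carried out carefully, but you would be re-proving, in PDE form, what this symmetry gives for free; it is worth recording the identity $a(u,v)=a(\overline{v},\overline{u})$ explicitly and using it in place of the adjoint Rellich argument.
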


\begin{proof}
From the boundedness of the sesquilinear form $a(\cdot, \cdot)$ and the a priori estimate \eqref{eq13},
and by arguing similarly as in the proof of Lemma 4.5 in \cite{SN3}, we can obtain a the priori estimate \eqref{eq_3_3}.
Then, by Lemma \ref{lem4} the sesquilinear form $a(\cdot, \cdot)$ satisfies the following inf-sup condition
\ben
\inf_{0 \neq u\in V_h} \sup_{0 \neq u\in V_h} \frac{|a(u,v)|}{\|u\|_{V_h} \|v\|_{V_h}} > 0.
\enn
Further, since $a(u,v) = a(\overline{v},\overline{u})$, the following transposed inf-sup condition is also satisfied:
\ben
\sup_{0 \neq u\in V_h} \frac{|a(u,v)|}{\|u\|_{V_h}} > 0 \quad \text{for all} \quad v\in V_h
\enn
Finally, by Theorem \ref{thm1}, we obtain the existence and uniqueness of solution of the variational problem \eqref{eq_3_1}
with the estimate \eqref{eq36}. In particular, the estimate \eqref{eq37} also holds for the problem \BVP, since, in this case,
$\mathcal{G}(v)=-\int_{\Om_{h}}g\ov{v}\dd x$ with the local source $g\in L^2(\R^2)$ compactly supported in $\Om_h$ and
$\|\mathcal{G}\|_{V_h^{\ast}}\leq \|g\|_{L^2(\R^2)}$. The proof is thus complete.
\end{proof}

\subsection{Case 2: $0 <k_2^2 < k^2_1$}

In this subsection, we consider the more challenging case with $k_2^2 > 0$.
In this case, the integrals on $\pa D$ in the Rellich identity \eqref{eq19}
and $\|u\|_{L^2(\Om_{2,h})}$ can not be bounded by \eqref{eq22}, so the a priori
estimate \eqref{eq_3_3} can not be established. However, it is seen from \eqref{eq20} that the integrals
on $\pa D$ in \eqref{eq19} can be bounded locally by $C(\|u\|^2_{L^2(D^{\prime}\backslash D)}
+ \| g \|^2_{L^2(D^{\prime}\backslash D)})$. This fact motivates us to find bounds
for $\| u \|^2_{L^2(D^{\prime}\backslash D)}$ instead of $\|u\|^2_{L^2(\Om_{2,h})}$ with
$D \subset D^{\prime} \subset \Om_{2,h}\cup \overline{D}$. Thus we first consider the variational
problem \eqref{eq_3_1} with the wave number defined by
\ben
k^2_{\alpha}(x) := \left\{
  \begin{array}{ll}
     k^2_1, & x\in \Om_1\\
     k^2_2 + i\alpha, & x\in D^{\prime} \backslash D \\
     k^2_2, & x\in \Om_2 \backslash D^{\prime}
  \end{array}\right.
\enn
with $\alpha > 0$.

\begin{theorem}\label{thm3_5}
The operator equation \eqref{eq_oe} with the wave number $k = k_{\alpha}$ has a unique solution $u \in V_h$, that is, $\mathcal{A}^{-1}_{k_{\alpha}}:V^{\ast}_h\rightarrow V_h$ is bounded.
\end{theorem}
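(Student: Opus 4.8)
The plan is to treat the modified problem with wave number $k_\alpha$ as a perturbation of the absorbing case covered by Case~1, and obtain well-posedness via a compactness-plus-uniqueness (Fredholm) argument. First I would introduce the sesquilinear form $a_\alpha(\cdot,\cdot)$ obtained from \eqref{eqlf} by replacing $k^2$ with $k_\alpha^2$, and compare it with the form $a_{\mathrm{abs}}(\cdot,\cdot)$ corresponding to the genuinely absorbing wave number $k^2 \equiv k_2^2 + i\alpha'$ on all of $\Om_2$ (or some other reference configuration handled by Lemma~\ref{lem3_4}), for which the operator is boundedly invertible by Theorem~\ref{thm2}. The difference $a_\alpha - a_{\mathrm{abs}}$ is of the form $\int_{\Om_h} c(x) u\overline v\,\dd x$ where $c$ is a bounded function supported in a bounded subset of $\Om_h$ (the region where the two wave numbers disagree). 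Since $\Om_h$ is bounded in the $x_2$-direction and the support of $c$ is bounded in $x_1$ as well, the embedding of $V_h$ restricted to that bounded region into $L^2$ is compact (Rellich--Kondrachov), so the operator $\mathcal K$ on $V_h$ induced by $a_\alpha - a_{\mathrm{abs}}$ is compact.

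Consequently $\mathcal A_{k_\alpha} = \mathcal A_{\mathrm{abs}} + \mathcal K$ with $\mathcal A_{\mathrm{abs}}$ invertible and $\mathcal K$ compact, hence $\mathcal A_{k_\alpha}$ is Fredholm of index zero. It then suffices to prove injectivity: if $a_\alpha(u,v)=0$ for all $v\in V_h$, then $u=0$. For this I would take $v=u$ and split into real and imaginary parts exactly as in \eqref{eq15}--\eqref{eq16}. The imaginary-part identity now reads
\ben
\alpha\Int_{D'\ba D}|u|^2\dd x + \im(k_2^2)\Int_{\Om_{2,h}}|u|^2\dd x + \im\Int_{\G_h}\ov u T_1 u\dd s + \im\Int_{\G_{-h}}\ov u T_2 u\dd s + \Int_{\G_1}\beta|u|^2\dd s = 0.
\enn
All five terms are nonnegative by Lemma~\ref{lem0}(ii), $\alpha>0$, $\im(k_2^2)\ge 0$, and $\beta\ge 0$; hence $u\equiv 0$ on $D'\ba D$. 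Since $u$ solves the Helmholtz equation $\Delta u + k_2^2 u = 0$ in $\Om_2$ with homogeneous Cauchy data on $\pa D'$ (a $C^2$ surface), unique continuation forces $u\equiv 0$ in $\Om_2$; the transmission conditions \eqref{eq2} then give $u\equiv 0$ and $\pa u/\pa\nu\equiv 0$ on $\G$, and a second unique continuation argument in $\Om_1$ (or use of the radiation condition \eqref{eq4} together with Lemma~\ref{lem2}) yields $u\equiv 0$ in $\Om_1$ as well.

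The main obstacle I anticipate is making the unique continuation step fully rigorous: one needs that $u$ vanishing on the open set $D'\ba D$ propagates to all of $\Om_2$, which requires $\Om_2$ to be connected (true here, since it is the region below a graph with one obstacle removed and $D'\ba D$ touches $\Om_2$) and requires enough interior regularity of $u$ away from $\pa D$ and $\G$, which follows from standard elliptic regularity since the coefficients are piecewise constant and the interfaces are $C^2$. A secondary technical point is verifying that $\mathcal K$ is genuinely compact despite $V_h$ being a space over an unbounded domain; this is fine because the multiplier $c$ has compact support, so one factors $\mathcal K$ through $L^2$ of a bounded Lipschitz domain, where Rellich's theorem applies, and composes with the bounded extension/restriction maps. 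Once injectivity and the Fredholm property are in hand, the bounded invertibility of $\mathcal A_{k_\alpha}$, and hence the claimed boundedness of $\mathcal A_{k_\alpha}^{-1}\colon V_h^{\ast}\to V_h$, is immediate.
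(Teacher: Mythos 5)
Your Fredholm decomposition does not get off the ground, because the perturbation you need to be compact is not compactly supported. The modified wave number is $k_\alpha^2=k_2^2+i\alpha$ only on the bounded set $D'\setminus D$ and equals the \emph{real} number $k_2^2$ on all of $\Om_2\setminus D'$. Any reference configuration covered by Case 1 / Lemma \ref{lem3_4} must have $\im (k^2)>0$ throughout $\Om_2$, so the set where $k_\alpha^2$ and $k_{\mathrm{abs}}^2$ disagree is the unbounded strip $\Om_{2,h}\setminus D'$, not a bounded subset of $\Om_h$. The multiplication operator $u\mapsto c\,u$ with $c$ bounded away from zero on an unbounded portion of $\Om_{2,h}$ is not compact on $V_h$ (Rellich--Kondrachov fails on unbounded domains), so $\mathcal A_{k_\alpha}-\mathcal A_{\mathrm{abs}}$ is not compact and the Fredholm-of-index-zero conclusion does not follow. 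A secondary defect of the same comparison: changing $k_2^2$ on all of $\Om_2$ also changes the DtN operator $T_2$ on $\G_{-h}$ in \eqref{eqlf}, so the difference of the two forms is not even a pure multiplication term. Note that the compact-perturbation idea is exactly what the paper uses one step \emph{later} (Theorem \ref{thm3_1}), writing $\mathcal A_k=\mathcal A_{k_\alpha}-i\alpha\mathcal P$ with $\mathcal P$ the restriction to the bounded set $D'\setminus D$; that works precisely because the perturbation there lives on $D'\setminus D$, and it takes the invertibility of $\mathcal A_{k_\alpha}$ --- the content of the present theorem --- as its starting point, so your route is also structurally circular.

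The paper instead proves Theorem \ref{thm3_5} by a direct a priori estimate in the spirit of Lemma \ref{lem3_4}: the whole point of placing the absorption $i\alpha$ on $D'\setminus D$ is that the imaginary part of the variational identity, \eqref{eq3_7}, then controls $\alpha\|u\|^2_{L^2(D'\setminus D)}$ by $\im\int_{\Om_h}g\ov u$, which is exactly the quantity needed to bound the $\pa D$ boundary terms in the Rellich identity \eqref{eq3_3} via \eqref{eq20}, and to control $\|u\|_{L^2(\Om_h)}$ through the harmonic extension of $u$ into $D$ and the Poincar\'e-type inequality \eqref{eq3_5}--\eqref{eq3_6}. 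The estimate $\|u\|_{V_h}\le C\|g\|_{L^2(\Om_h)}$ then yields the inf-sup condition via Lemma \ref{lem4} and well-posedness via Theorem \ref{thm1}. Your injectivity argument (imaginary part forces $u=0$ on $D'\setminus D$, then unique continuation through $\Om_2$, $\Om_1$) is sound in itself, but without a valid Fredholm or a priori-estimate framework it only gives uniqueness, not existence or the boundedness of $\mathcal A_{k_\alpha}^{-1}$.
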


\begin{proof}
By the arguments used in the last subsection, it is sufficient to prove that the a priori estimate in Lemma \ref{lem3_4}
holds with the wave number in the sesquilinear form $a(\cdot, \cdot)$ replaced by $k_{\alpha}$. One can then obtain
the following Rellich identity
\be\no
&&(k^2_1-k^2_2)\Int_{\G}(x_2 + h)|u|^2\nu_2\dd s + 2\Int_{\Om_{h}}|\frac{\pa u}{\pa x_2}|^2 \dd x\\ \no
&&\qquad\qquad=2h\Int_{\G_h}\left(|\frac{\pa u}{\pa x_2}|^2 - |\frac{\pa u}{\pa x_1}|^2 + k_1^2|u|^2\right)\dd s
+\Int_{\Om_h}\left(|\nabla u|^2 - k^2(x) |u|^2\right)\dd x\\ \no
&&\qquad\qquad+\Int_{\pa D}(x_2+h)\left(\left(|\nabla u|^2-\re(k_2^2)|u|^2\right)\nu_2
-2\re\left(\frac{\pa u}{\pa\nu}\frac{\pa\ov{u}}{\pa x_2}\right) \right)\dd s\\ \label{eq3_3}
&&\qquad\qquad- \alpha\,\im\Int_{D^{\prime}\backslash D}(x_2+h) u\frac{\pa\ov{u}}{\pa x_2}\dd x
- 2\re\Int_{\Om_h}(x_2+h)g\frac{\pa\ov{u}}{\pa x_2}\dd x
\en
and that \eqref{eq22} is replaced by
\be \label{eq3_7}
\alpha\Int_{D^{\prime}\backslash D}|u|^2\dd x +\im(\Int_{\G_h}\overline{u}T_1u\dd s)\leq\im(\Int_{\Om_h}g\ov{u}).
\en
It is easy to see that \eqref{eq24} and \eqref{eq3_4} still hold. However, to bound $\|u\|_{L^2(\Om_h)}$,
we first extend $u$ to $\widetilde{u}$ in $\Om_h\cup\ov{D}$ by defining $\widetilde{u}:=u$ in $\Om_h$
and $\widetilde{u}:=v$ in $\ov{D}$, where $v$ is the solution to the Dirichlet problem
$\Delta v=0$ in $D,$ $v=u|_{\pa D}$ on $\pa D.$
Since $u|_{\pa D}\in H^{\frac{1}{2}}(\pa D)$, and by \eqref{eq19a},
$\|v\|^2_{H^1(D)}\leq C\|u\|^2_{H^{1/2}(\pa D)}\leq C\|u\|^2_{H^1(D^{\prime}\ba D)}\leq C(\|u\|^2_{L^2(D^{\prime}\ba D)}
+ \|g\|^2_{L^2(D^{\prime}\ba D)})$. It is clear that $\widetilde{u}\in H^{1}(\Om_h\cup \ov{D})$ and
\be\label{eq3_5}
\|\widetilde{u}\|^2_{L^2(\Om_{h}\cup\ov{D})}\leq 4h\|\widetilde{u}\|^2_{L^2(\G)}
+4h^2\|\frac{\pa\widetilde{u}}{\pa x_2}\|^2_{L^2(\Om_{h}\cup \overline{D})}.
\en
Thus we have
\be \label{eq3_6}
\|u\|^2_{L^2(\Om_h)} \leq C(\|u\|^2_{L^2(\G)} + \|\frac{\pa u}{\pa x_2}\|^2_{L^2(\Om_{h})}
+ \|u\|^2_{L^2(D^{\prime}\ba D)} + \|g\|^2_{L^2(D^{\prime}\backslash D)})
\en
From \eqref{eq20}, \eqref{eq22+}, \eqref{eq3_4}, \eqref{eq3_3}, \eqref{eq3_7} and \eqref{eq3_6}, it follows that
\ben
\|u\|_{V_h} \leq C\|g\|_{L^2(\Om_h)}
\enn
The proof is thus finished.
\end{proof}

We now study the variational formulation with real wave numbers.

\begin{theorem}\label{thm3_1}
For the wave number $k$ satisfying $0<k^2_2<k^2_1$, $\mathcal{A}_k:V_h\rightarrow V_h^{\ast}$ is a Fredholm operator
with index of zero.
\end{theorem}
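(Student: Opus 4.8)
The plan is to write $\mathcal{A}_k = \mathcal{A}_{k_\alpha} + (\mathcal{A}_k - \mathcal{A}_{k_\alpha})$, where $k_\alpha$ is the complexified wave number introduced before Theorem~\ref{thm3_5}, and to show that the second summand is compact. Since $\mathcal{A}_{k_\alpha}$ is boundedly invertible by Theorem~\ref{thm3_5}, it is in particular Fredholm of index zero, and adding a compact perturbation preserves both the Fredholm property and the index. So the whole argument reduces to the compactness of $\mathcal{A}_k - \mathcal{A}_{k_\alpha}: V_h \to V_h^\ast$.

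To identify that operator, I would compare the sesquilinear forms. From the definition \eqref{eqlf}, the forms $a(\cdot,\cdot)$ for the two wave numbers differ only in the term $\int_{\Om_h} k^2(x) u\overline{v}\,\dd x$, and by construction $k_\alpha^2(x) - k^2(x) = i\alpha\,\chi_{D'\setminus D}(x)$. Hence
\ben
\langle (\mathcal{A}_k - \mathcal{A}_{k_\alpha})u, v\rangle_{V_h} = i\alpha\Int_{D'\setminus D} u\,\overline{v}\,\dd x .
\enn
Thus $\mathcal{A}_k - \mathcal{A}_{k_\alpha}$ factors as $V_h \xrightarrow{R} L^2(D'\setminus D) \xrightarrow{i\alpha\,\text{(embedding)}^\ast} V_h^\ast$, where $R$ is the restriction map $u \mapsto u|_{D'\setminus D}$. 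Since $D'\setminus D$ is a bounded Lipschitz domain, the restriction $V_h = \{u\in H^1(\Om_h): u=0 \text{ on } \G_2\} \to H^1(D'\setminus D) \hookrightarrow\hookrightarrow L^2(D'\setminus D)$ is compact by the Rellich–Kondrachov theorem; composing with a bounded operator on the right keeps it compact. Therefore $\mathcal{A}_k - \mathcal{A}_{k_\alpha}$ is compact.

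The one point that needs a little care — and I expect it to be the only genuine obstacle — is that $\Om_h$ is unbounded, so one cannot invoke Rellich globally; the argument must genuinely exploit that the perturbing term is supported in the \emph{bounded} set $D'\setminus D$. This is exactly why the form difference localizes to $D'\setminus D$, and the factorization above makes the localization explicit: the trace/restriction of an $H^1(\Om_h)$ function to the bounded subdomain $D'\setminus D$ lands in $H^1(D'\setminus D)$ with norm controlled by $\|u\|_{V_h}$, and only then do we use compact embedding on a bounded domain. With this observation in hand, the conclusion is immediate: $\mathcal{A}_k = \mathcal{A}_{k_\alpha} + (\text{compact})$ is Fredholm of index zero, which is the assertion of Theorem~\ref{thm3_1}.
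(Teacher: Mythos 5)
Your proposal is correct and follows essentially the same route as the paper: the paper also writes $\mathcal{A}_k$ as $\mathcal{A}_{k_\alpha}$ plus a multiple of the compact operator $u\mapsto u|_{D'\setminus D}$, proves compactness by exactly your factorization $V_h\to H^1(D'\setminus D)\hookrightarrow\hookrightarrow L^2(D'\setminus D)\to V_h^\ast$, and invokes the invertibility of $\mathcal{A}_{k_\alpha}$ from Theorem~\ref{thm3_5}. The only (immaterial) discrepancy is the sign of the perturbation term, where your computation $k_\alpha^2-k^2=i\alpha\chi_{D'\setminus D}$ is in fact the careful one.
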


\begin{proof}
Define the restriction operator $\mathcal{P}:V_h\rightarrow V_h^{\ast}$ such that $\mathcal{P}u=u|_{D^{\prime}\ba D}$
for $u\in V_h$. Then $\mathcal{P}$ is compact. This can be seen by the facts that the embedding
$V_h\rightarrow H^1(D^{\prime}\ba D)$ is bounded, the embedding $H^1(D^{\prime}\ba D)\rightarrow L^2(D^{\prime}\ba D)$
is compact and the embedding $L^2(D^{\prime}\ba D)\rightarrow V_h^{\ast}$ is bounded.
Then, by the definition of $\mathcal{A}_k$ and $k_{\alpha}$, we have
$\mathcal{A}_{k} = \mathcal{A}_{k_{\alpha}} - i\alpha \mathcal{P}$. Thus $\mathcal{A}_{k}u=\mathcal{G}$ can be rewritten
as $(\mathcal{A}_{k_{\alpha}} - i\alpha\mathcal{P})u = \mathcal{G}$, where $\mathcal{A}_{k_{\alpha}}$ is an isomorphism
and $\mathcal{P}$ is compact from $V_h$ to $V_h^{\ast}$. Hence, it follows that $\mathcal{A}_{k}$ is a Fredholm operator
of index zero.
\end{proof}

\begin{corollary}\label{coro3_1}
Let the wave number $k$ satisfy the condition in Theorem \ref{thm3_1}. If $\text{m}(\G_1)\neq 0,\beta >0$ on $\G_1$,
where $\text{m}(\G_1)$ denotes the measure of $\G_1$ on the boundary $\pa D$, then there exists a unique
solution to \eqref{eq_oe}. In particular, the variational problem \eqref{eq12} or the problem \BVP is well-posed
with the solution satisfying the estimate \eqref{eq37}.
\end{corollary}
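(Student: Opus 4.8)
We combine the Fredholm property already established in Theorem \ref{thm3_1} with a uniqueness argument. Since $\mathcal{A}_k$ has index zero for $0<k_2^2<k_1^2$, it suffices to prove injectivity; existence, uniqueness and the bound \eqref{eq37} then follow from the open mapping theorem. The sole purpose of the extra hypothesis $\mathrm{m}(\G_1)\neq0$, $\beta>0$ on $\G_1$, is to make the uniqueness argument go through in this non-absorbing case, where the a priori estimate of Lemma \ref{lem3_4} is no longer available.

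So suppose $u\in V_h$ satisfies $a(u,v)=0$ for all $v\in V_h$; equivalently, $u$ solves \BVP with $g=0$. Testing with $v=u$ and taking the imaginary part of \eqref{eqlf}, the volume term $\Int_{\Om_h}(|\nabla u|^2-k^2|u|^2)\dd x$ is real (both $k_1^2$ and $k_2^2$ are real here), so only the boundary contributions remain; this is exactly the $g=0$, $\im(k_2^2)=0$ specialisation of \eqref{eq16}, namely
\[
\im\Int_{\G_h}\overline{u}\,T_1u\,\dd s+\im\Int_{\G_{-h}}\overline{u}\,T_2u\,\dd s+\Int_{\G_1}\beta|u|^2\,\dd s=0 .
\]
By Lemma \ref{lem0}(ii) the first two terms are nonnegative, and the third is nonnegative because $\beta\geq0$; hence each of them vanishes. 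In particular $\Int_{\G_1}\beta|u|^2\,\dd s=0$, and since $\beta$ is strictly positive on $\G_1$ and $\mathrm{m}(\G_1)\neq0$, this forces $u=0$ on $\G_1$. Feeding $u|_{\G_1}=0$ back into the impedance condition in \eqref{eq3} gives also $\pa u/\pa\nu=-i\beta u=0$ on $\G_1$. Thus $u$ carries vanishing Cauchy data on the nonempty open piece $\G_1\subset\pa D$.

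Next I would use unique continuation. By interior elliptic regularity ($\G,\pa D\in C^2$, $g=0$) the field $u$ lies in $H^2_{\mathrm{loc}}$ and solves the constant-coefficient, hence real-analytic, equation $\Delta u+k_2^2u=0$ in $\Om_2$. Zero Cauchy data on the boundary patch $\G_1\subset\pa\Om_2$ together with Holmgren's uniqueness theorem give $u\equiv0$ in a neighbourhood of $\G_1$ inside $\Om_2$, and unique continuation for the Helmholtz equation together with the connectedness of $\Om_2$ then propagates this to $u\equiv0$ throughout $\Om_2$. Consequently $u^-=\pa u^-/\pa\nu=0$ on $\G$, so the transmission conditions \eqref{eq2} give $u^+=\pa u^+/\pa\nu=0$ on $\G$; repeating the same argument for $\Delta u+k_1^2u=0$ in the connected set $\Om_1$ with zero Cauchy data on $\G\subset\pa\Om_1$ yields $u\equiv0$ in $\Om_1$. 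Hence $u\equiv0$ in $\R^2\setminus\overline{D}$, i.e. $u=0$ in $V_h$, which proves injectivity of $\mathcal{A}_k$. Being Fredholm of index zero and injective, $\mathcal{A}_k$ is an isomorphism from $V_h$ onto $V_h^\ast$, so by the open mapping theorem \eqref{eq_oe} has a unique solution with $\|u\|_{V_h}\leq C\|\mathcal{G}\|_{V_h^\ast}$; for \BVP one has $\|\mathcal{G}\|_{V_h^\ast}\leq\|g\|_{L^2(\R^2)}$, which gives \eqref{eq37}.

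The one point that needs care is the unique continuation step: one must confirm that $\Om_1$ and $\Om_2$ are connected in the given geometry and that $\G_1$, being open in $\pa D$ with positive surface measure, is a bona fide boundary patch to which Holmgren's theorem can be applied; everything else is a routine bookkeeping of signs. Observe that, in contrast with the general rough-surface uniqueness arguments of \cite{SN3,SN5,SN6}, no analysis of the propagating part of the angular spectrum is required here, because the impedance boundary term already supplies the vanishing Cauchy data that starts the continuation.
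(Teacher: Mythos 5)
Your proof is correct and follows essentially the same route as the paper: use the index-zero Fredholm property from Theorem \ref{thm3_1} to reduce to injectivity, extract $\int_{\G_1}\beta|u|^2\,\dd s=0$ from the energy identity together with the sign properties of $T_1,T_2$ in Lemma \ref{lem0}(ii), deduce vanishing Cauchy data on $\G_1$, and conclude by Holmgren/unique continuation. You even correctly take the \emph{imaginary} part of $a(u,u)$ (consistent with \eqref{eq16}), whereas the paper's text says ``real part,'' which appears to be a slip; your more detailed spelling-out of the continuation from $\G_1$ through $\Om_2$, across $\G$, and into $\Om_1$ is just an expansion of the paper's one-line appeal to Holmgren's theorem.
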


\begin{proof}
From Theorem \ref{thm3_1}, the existence follows from the uniqueness. It is sufficient to prove that if $u\in V_h$
satisfying \eqref{eq_oe} with $\mathcal{G} = 0$ then $u$ vanishes in $\Omega_h$. Let $v = u$ in \eqref{eq_3_1} and
take the real part of the equation \eqref{eq_3_1}. One obtains that
\ben
\Int_{\G_1}\beta |u| \dd s = 0
\enn
Since $\beta >0$ on $\G_1$, we have $u = 0$ on $\Gamma_1$, which, together with the boundary condition \eqref{eq3},
implies that $\frac{\pa u}{\pa \nu} = 0$ on $\G_1$. By Holmgren's uniqueness theorem, $u$ vanishes in $\Omega_h$.
The well-posedness of the variational problem \eqref{eq_oe} or the problem \BVP follows by the same argument as
used in Theorem \ref{thm2}.
\end{proof}

\begin{remark}\label{re1} {\rm
In Corollary \ref{coro3_1}, the direct scattering problem is well-posed if the buried obstacle $D$ is partially
coated with a non-absorbing material. However, similar results can not be generalized to the cases with other
boundary conditions (e.g., the Dirichlet or Neumann boundary condition or the mixed Dirichlet and Neumann condition)
on the obstacle since the uniqueness of solutions is not clear in these cases.
}
\end{remark}

In the end of this section, we give the following corollary which will be used in the proof of Theorem \ref{thm5_1}.

\begin{corollary}\label{coro4_1}
Assume that the wave numbers satisfy the condition in Case 1 or Case 2 and that part of the boundary of the buried obstacle
is dielectric. Let $f\in H^{-1}(\R^2)$ and let $\chi$ be a cut off function with a compact support $K\subset \Om_h$.
Then there exists exactly one solution $u\in V_h$ to \BVP with $g$ replaced by $\chi f$. Further, the solution $u$ satisfies
the estimate
\ben
\|u\|_{V_h} \leq C\|f\|_{H^{-1}(\R^2)}
\enn
where the constant $C > 0$ is independent of $f$.
\end{corollary}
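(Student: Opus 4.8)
The plan is to deduce this from the already-established well-posedness of \BVP\ with $L^2$ data (Theorem \ref{thm2} in Case 1, Corollary \ref{coro3_1} in Case 2) together with a duality/density argument. First I would record that, under either hypothesis on the wave numbers and with part of $\pa D$ dielectric, the operator $\mathcal{A}_k : V_h \to V_h^{\ast}$ is an isomorphism: in Case 1 this is Theorem \ref{thm2}, and in Case 2 it is Corollary \ref{coro3_1} (the hypothesis ``part of the boundary is dielectric'' is precisely $\mathrm{m}(\G_1)\neq 0$, $\beta>0$ on $\G_1$, which gives uniqueness, hence by the Fredholm property of Theorem \ref{thm3_1} invertibility). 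Therefore it suffices to show that the right-hand side functional
\[
\mathcal{G}(v) := -\Int_{\Om_h}(\chi f)\,\ov{v}\,\dd x, \qquad v\in V_h,
\]
is a well-defined, bounded element of $V_h^{\ast}$ with $\|\mathcal{G}\|_{V_h^{\ast}} \leq C\|f\|_{H^{-1}(\R^2)}$; then $u := \mathcal{A}_k^{-1}\mathcal{G}$ is the unique solution in $V_h$ and the estimate $\|u\|_{V_h}\leq\|\mathcal{A}_k^{-1}\|\,\|\mathcal{G}\|_{V_h^{\ast}}\leq C\|f\|_{H^{-1}(\R^2)}$ follows.

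The core estimate is the claim that $v\mapsto \langle \chi f, \ov v\rangle$ extends boundedly from $V_h$ to $\C$ by $C\|f\|_{H^{-1}(\R^2)}$. Here one interprets $\chi f \in H^{-1}(\R^2)$ via $\langle \chi f,\varphi\rangle := \langle f, \chi\varphi\rangle$ for $\varphi\in H^1(\R^2)$, which is legitimate because $\chi\in C_0^\infty$ is a multiplier on $H^1(\R^2)$ with $\|\chi\varphi\|_{H^1(\R^2)}\leq C_\chi\|\varphi\|_{H^1(\R^2)}$, $C_\chi$ depending only on $\|\chi\|_{W^{1,\infty}}$. Thus $\|\chi f\|_{H^{-1}(\R^2)}\leq C_\chi\|f\|_{H^{-1}(\R^2)}$. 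Now given $v\in V_h$, since $\chi$ has support $K\subset\Om_h$ and $K$ stays a positive distance from $\pa D$ and from $\G_{\pm h}$ (it is a fixed compact subset of the open set $\Om_h$), the product $\chi\,\ov v$, extended by zero, lies in $H^1(\R^2)$ with $\|\chi \ov v\|_{H^1(\R^2)}\leq C_\chi\|v\|_{H^1(\Om_h)} = C_\chi\|v\|_{V_h}$; this uses only that multiplication by a fixed cutoff supported away from $\pa D$ maps $H^1(\Om_h)$ into $H^1(\R^2)$. Consequently
\[
|\mathcal{G}(v)| = |\langle \chi f, \ov v\rangle| = |\langle f, \chi \ov v\rangle|
\leq \|f\|_{H^{-1}(\R^2)}\,\|\chi \ov v\|_{H^1(\R^2)}
\leq C\,\|f\|_{H^{-1}(\R^2)}\,\|v\|_{V_h},
\]
which gives $\mathcal{G}\in V_h^\ast$ with the required norm bound, and $C$ depends only on $\chi$ and $\Om_h$, not on $f$.

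The only point needing a little care — and the step I expect to be the main (mild) obstacle — is the meaning of ``$u\in V_h$ solves \BVP\ with $g$ replaced by $\chi f$'' when $\chi f$ is merely a distribution rather than an $L^2$ function: the pointwise equation $\Delta u + k^2 u = \chi f$ must be read in the distributional sense on $\R^2\ba D$, and the equivalence between \BVP\ and the variational formulation \eqref{eq12} has to be re-examined with $\mathcal{G}(v)=\langle\chi f,\ov v\rangle$ in place of $-\int g\ov v$. This is routine: the derivation of \eqref{eq12} by multiplying \eqref{eq1} by $v\in V_h$ and integrating by parts goes through verbatim once ``$\int_{\Om_h} g\ov v$'' is replaced by the duality pairing $\langle \chi f,\ov v\rangle$, and conversely, by the same argument given after \eqref{eqlf} (using Lemma \ref{lem2} to extend $u$ to $\R^2$ via \eqref{eq4}--\eqref{eq5} with continuous traces, and the transmission/boundary conditions recovered from the variational identity), any solution of the variational problem satisfies $\Delta u + k^2 u = \chi f$ in $\mathcal{D}'(\R^2\ba D)$, the transmission condition \eqref{eq2}, the boundary condition \eqref{eq3}, and the radiation conditions \eqref{eq4}--\eqref{eq5}. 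Uniqueness for \BVP\ then follows from injectivity of $\mathcal{A}_k$. This completes the argument.
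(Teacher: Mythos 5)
Your proposal is correct and follows essentially the same route as the paper: both reduce the statement to showing that $v\mapsto\langle \chi f,\ov v\rangle$ defines a bounded functional on $V_h$ with norm at most $C\|f\|_{H^{-1}(\R^2)}$ (the paper factors this through $(H^1(K))^{\ast}\subset V_h^{\ast}$, you pair $f$ directly with the zero-extension of $\chi\ov v$ to $H^1(\R^2)$ — the same duality estimate), and then both invoke Theorem \ref{thm2} and Corollary \ref{coro3_1}. Your extra paragraph on re-reading the BVP/variational equivalence for distributional data is a reasonable addition that the paper leaves implicit.
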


\begin{proof}
We first claim that $\chi f\in (H^{1}(K))^{\ast}$. In fact, for $\varphi \in C^{\infty}(\R^2)\cap H^1(\R^2)$
\ben
|\langle \chi f,\varphi \rangle | = | \langle f, \chi\varphi\rangle | \leq C
\|f\|_{H^{-1}(\R^2)}\|\varphi\|_{H^1(K)}
\enn
where $\langle \cdot,\cdot \rangle$ is the dual pair between $H^{-1}(\R^2)$ and $H^1(\R^2)$ and $C$ is independent of $f$.
By the argument of density, we obtain $\|\chi f\|_{(H^{1}(K))^{\ast}}  \leq C \|f\|_{H^{-1}(\R^2)}$.
Meanwhile, noting that $K \subset \Om_h$, it is clear that $V_h \subset H^1(\Om_h) \subset H^1(K)$.
Therefore $\chi f \in (H^{1}(K))^{\ast} \subset V_h^{\ast}$ and $\|\chi f\|_{V_h^{\ast}}\leq C\|\chi f\|_{(H^{1}(K))^{\ast}}$.
Thus one has the following variational problem
\ben
a(u,v) = \langle \chi f,v\rangle_{V_h}
\enn
which, by Theorem \ref{thm2} and Corollary \ref{coro3_1}, is well-posed. The proof is thus complete.
\end{proof}

\begin{remark}\label{rem3}{\rm
All the results in this section also hold under the condition that $k^2_1 < \re{k^2_2}$. In fact,
Lemma \ref{lem3_4} holds on noticing that applying Green's first theorem to $u$ and
$\varphi_{A}(r)(x_2-h)\pa\overline{u}/\pa x_2$ leads to a Rellich identity similar to \eqref{eq3_3}.
Then other results follow naturally after repeating the argument in this section again.
And they also have the natural generalization in the cases of higher dimensions.
}
\end{remark}

\begin{remark}\label{rem3+}{\rm
It is known from the proof of Lemma \ref{lem3_4} and Theorem \ref{thm3_5} that, for $D=\varnothing$,
the direct scattering problem is well-posed if $\im(k_2^2)\geq 0$ and either $0<k_1^2<\re(k_2^2)$ or $k_1^2>\re(k_2^2)>0$.
}
\end{remark}

In the remaining part of this paper, we always assume that one of the following conditions is satisfied,
under which the direct scattering problem is well-posed:

(i) $0<k^2_1<\re{k^2_2}$ or $k^2_1>\re{k^2_2}>0$, $\im{k^2_2}>0$, and any boundary condition on the obstacle.

(ii) $0<k^2_1<k^2_2$ or $k^2_1>k^2_2>0$ and part of the obstacle is partly coated.

\section{The scattering problem with incident PSWs and HSPSWs}\label{sec4}
\setcounter{equation}{0}

In this section, we study the well-posedness of the scattering problem corresponding to incident
point source waves (PSWs) and hyper-singular point source waves (HSPSWs).
The first case corresponds to the problem \BVP with $g$ being a Dirac delta function,
saying $\delta(x-z), z\in\mathbb{R}^2\backslash\{\overline{D}\cup \G\}$,
while for the second case, $g=\delta_1^{\prime}(x-z)$ where $\delta_1^{\prime}(x)$ stands for the derivative
of $\delta(x)$ with respect to $x_1$ in the distributional sense. Obviously, Theorem \ref{thm2} can not be
applied directly since the distributions $\delta(x-z)$ and $\delta_1^{\prime}(x-z)$ do not belong into $V_h^{\ast}$.
However, we shall see shortly that both cases can be modified into the case that one can deal with by Theorem \ref{thm2}
and Corollary \ref{coro3_1} and we only consider the case when the point source lies upon the rough interface.

Let $\Phi_k(x;z):=(i/4)H_0^{(1)}(k|x-z|),\; x,z\in\mathbb{R}^2,\;x\neq z$ denote the
fundamental solution of the Hemholtz operator $\Delta + k^2$ with $H_0^{(1)}$ the Hankel
function of the first kind of order zero. For $z=(z_1,z_2)\in U_0^+$,
define $z^{\prime}=(z_1,-z_2)$. Then $G_k(x;z)=\Phi_k(x;z)-\Phi_k(x;z^{\prime})$
is the Dirichlet Green's function for the Hemholtz operator $\Delta + k^2$ in $U_0^+$.
By the asymptotic property of the Hankel function for small and large arguments,
$G_k$ satisfies the following inequalities:
\begin{equation}\label{eq4_1}
\begin{split}
&|G_k(x;z)|,|\nabla_x G_k(x;z)|, |\nabla_z G_k(x;z)|\leq C\frac{(1+|x_2|)(1+|z_2|)}{|x-z|^{3/2}}
\quad \text{for} \quad x,z\in U^+_0\text{with}\;|x-z|\geq 1, \\
&|G_k(x;z)|\leq C(1+|\text{log}|x-z||)\quad \text{for} \quad x,z\in U^+_0 \text{ with}\;0<|x-z|\leq 1,\\
&  |\nabla_x G_k(x;z)|, |\nabla_z G_k(x;z)| \leq \frac{C}{|x - z|} \quad \text{for}
\quad x,z\in U^+_0 \text{ with }  0 < |x-z| \leq 1,
\end{split}
\end{equation}
where $C$ is a positive constant depending only on $k$.

It is easy to verify that $(\Delta+ k^2)(\pa\Phi_{k_1}(x)/\pa x_1)=\delta_1^{\prime}(x)$ in
the distributional sense. Therefore, $\pa\Phi_k(x)/\pa x_1, x\neq 0$, is the HSPSW positioned at the origin.
Since $G_{k_1}(\cdot;z)$ and $ G_{k_1}^{\prime}(\cdot;z):= \pa G_{k_1}(\cdot;z)/ \pa x_1$ both belong
to $L^1_{\text{loc}}(\R^2)$ and $H^1(\Om_h\backslash B_{\varepsilon}(z)), \varepsilon > 0$, for convenience,
we may use $G_{k_1}(x;z)$ (or $G_{k_1}^{\prime}(x;z)$) to denote the incident PSW (or incident HSPSW).
Consider the incident field $u^{\text{i}}\in\{ G_{k_1}(\cdot;z),G_{k_1}^{\prime}(\cdot;z)\}$.
We write the total field $u^{\text{t}} = u^{\text{i}} + u$ in $\Om_1$ and $u^{\text{t}} = u$ in $\Omega_2$
with $u$ being the transmitted field.
Note that the total field corresponding to PSW and HSPSW does not belong to $V_h$ because of the singularity
of the incident field. However, for a source positioned at $z\in \Om_1$ and $0 < \delta_0 < \text{dist}(z,\G)$,
it is expected to find the solution in the space
$\widetilde{V_h}:= \{u| u\in H^1(\Om_h\backslash B_{\delta_0}(z)), u|_{\G_2} = 0\}$
with the norm $\|v\|_{\widetilde{V_h}}:=\|v\|_{H^1(\Om_h\backslash B_{\delta_0}(z))}$.

{\bf The scattering problem (SP):} For $z\in\Om_1$ and $u^{\text{i}}\in\{ G_{k_1}(\cdot;z),G_{k_1}^{\prime}(\cdot;z)\}$,
find $u^{\text{t}}(x;z)\in \widetilde{V_h}$, such that
\ben
u^{\text{t}}(\cdot;z) = u(\cdot;z) + u^{\text{i}}(\cdot;z)\quad\text{in}\quad \Om_1 \\
u^{\text{t}}(\cdot;z) = u(\cdot;z) \quad\text{in}\quad\Om_2\backslash D,
\enn
where $u(\cdot;z)$ satisfies
\ben
\Delta u(\cdot;z) + k^2 u(\cdot;z) = 0\quad\text{in}\quad\R^2\backslash(\overline{D}\cup\G)\\
 u^+(\cdot;z)- u^-(\cdot;z)= -u^{\text{i}}\;\;\text{on}\;\;\G,\\
\frac{\pa u^+}{\pa\nu}(\cdot;z)-\frac{\pa u^-}{\pa\nu}(\cdot;z)=-\frac{\pa u^{\text{i}}}{\pa\nu}\;\;\text{on}\;\;\G,
\enn
and the boundary condition \eqref{eq3} and the radiation conditions \eqref{eq4} and \eqref{eq5}.

We now study the existence and uniqueness of solutions to the scattering problem \SP by replacing the incident wave
with a non-singular one. This technique has been used in \cite{SN6}. Choose $\delta>0$ such that
$\text{dist}(z,\G)>\delta$ and define a new incident wave by
\ben
\wi{u}^{\text{i}}(x)=\left\{
  \begin{array}{ll}
  u^{\text{i}}(x), & x\not \in B_{\delta}(z)\\
  A+B J_0(x), & \text{otherwise}
  \end{array}
  \right.
\enn
where the constants $A$ and $B$ are chosen to ensure that $\wi{u}^{\text{i}}\in C^1(\Om_{1,h})$.
Then $\wi{u}^{\text{i}}\in H^2_{\text{loc}}(\Om_{1,h})$ and $(\Delta+k_1^2)\wi{u}^{\text{i}}=\wi{g}$,
where $\wi{g}(x):=Ak_1^2\;\text{for}\;x\in B_{\delta}(y),\wi{g}(x):=0$ otherwise.
Since $u^{\text{i}}=\wi{u}^{\text{i}}$ outside $B_{\delta}(z)$ and $B_{\delta}(z)\cap\G=\varnothing$, $u^{\text{i}}$
and $\wi{u}^{\text{i}}$ have the same boundary value and normal derivative on $\G$. Obviously, the substitution of
$u^{\text{i}}$ by $\wi{u}^{\text{i}}$  does not change the scattered field, so we can reformulate the scattering
problem \SP by finding $\widetilde{u}^{t}(x)=\wi{u}^{\text{i}}(x)+u(x)$ for
$x\in\ov{\Om}_{1,h},\widetilde{u}^{t}=u(x)$ for $x\in\overline{\Om}_{2,h}$ such that $\widetilde{u}^t$
solves \BVP with $g:=\wi{g}\in L^2(\R^2)$. By Theorem \ref{thm2} and Corollary \ref{coro3_1},
there is a unique solution $\widetilde{u}^t\in V_h$ to the scattering problem \SP. Then we obtain the scattered field
$u(x) = \widetilde{u}^t(x)-\wi{u}^{\text{i}}(x)$ for $x\in\overline{\Om}_{1,h},$ $u(x)=\widetilde{u}^t(x)$ for
$x\in\overline{\Om}_{2,h}$. It is clear that $u|_{\Om_{j,h}}\in H^1(\Om_{j,h})$. Since $G_{k_1}(x;z)$ and $G_{k_1}(x;z)$
satisfy the inequality \eqref{eq4_1}, we have $u^{\text{t}}(\cdot;z)\in\widetilde{V_h}$.

For $z\in\Om_1$ and $\delta>0$, the solution to the scattering problem \SP satisfies that
$u^{\text{t}}(\cdot;z)\in\widetilde{V_h}$. Let $K$ be a compact set in $\Om_h\backslash B_{\delta}(z)$.
Then it is clear that $\|u^{\text{t}}(\cdot;z)\|_{H^1(K)}$ is bounded. In fact, it can be shown that,
as $z$ approaches $\G$, $\|u^{\text{t}}(\cdot;z) \|_{H^1(K)}$ is bounded uniformly. This is shown in the following theorem,
which will be one of the key ingredients in proving the uniqueness for the inverse scattering problem.

\begin{theorem}\label{thm4_1}
For $z_0\in\R^2\ba D$ fixed, $\delta>0$, $z\in B_{\delta}(z_0)\ba\G$ with $\ov{B_{\delta}(z_0)}\cap\ov{D}=\varnothing$
and the compact set $K\subset\Om_h\backslash B_{\delta}(z_0)$, assume that $d=\text{dist}(K,\ov{B_{\delta}(z_0)})>0$.
Then the total field $u^{\text{t}}(\cdot;z)$ of the scattering problem \SP satisfies the estimate
\be\label{eq4_3}
\|u^{\text{t}}(\cdot;z)\|_{H^1(K)}\leq C,
\en
where the constant $C > 0$ depends on $k_1,k_2,\delta,d$ but is independent of $z$.
\end{theorem}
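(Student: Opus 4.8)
The plan is to show the uniform bound by decomposing the total field into its incident and scattered parts and treating the singular incident field explicitly via the bounds \eqref{eq4_1}, while controlling the scattered part by the well-posedness estimate of Corollary \ref{coro4_1} applied to a source that is uniformly bounded in $H^{-1}(\R^2)$ as $z\to\G$. First I would fix the geometry: since $\ov{B_\delta(z_0)}\cap\ov D=\varnothing$ and $K\subset\Om_h\ba B_\delta(z_0)$ with $d=\mathrm{dist}(K,\ov{B_\delta(z_0)})>0$, I may enlarge $\delta$ slightly (to some $\delta'\in(\delta,2\delta)$) so that $K$ still lies outside $B_{\delta'}(z_0)$ with a definite gap, and I fix a cutoff $\chi\in C_0^\infty(\R^2)$ with $\chi\equiv1$ on a neighbourhood of $K$ and $\mathrm{supp}\,\chi\subset\Om_h\ba\ov{B_{\delta}(z_0)}$. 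The point is that on $\mathrm{supp}\,\chi$ the incident field $u^{\mathrm{i}}(\cdot;z)\in\{G_{k_1}(\cdot;z),G_{k_1}'(\cdot;z)\}$ is smooth and, by \eqref{eq4_1}, has $H^1(\mathrm{supp}\,\chi)$-norm bounded by a constant depending only on $k_1,\delta,d$ and the fixed bounded set $\mathrm{supp}\,\chi$ — crucially \emph{uniformly in} $z\in B_\delta(z_0)\ba\G$, because $\mathrm{dist}(z,\mathrm{supp}\,\chi)\ge\delta'-\delta>0$. Hence $\|u^{\mathrm{i}}(\cdot;z)\|_{H^1(K)}\le C$ uniformly, and it remains to bound the scattered field $u(\cdot;z)$ on $K$.

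Next I would derive an equation for $u(\cdot;z)$ with a right-hand side that does not see the singularity. The scattered field $u(\cdot;z)$ satisfies $\Delta u+k^2u=0$ away from $\G$, the transmission jumps $u^+-u^-=-u^{\mathrm{i}}$, $\pa_\nu u^+-\pa_\nu u^-=-\pa_\nu u^{\mathrm{i}}$ on $\G$, the boundary condition \eqref{eq3} on $\pa D$, and the radiation conditions \eqref{eq4}, \eqref{eq5}. Using the cutoff, I set $w:=u-(1-\chi)u^{\mathrm{i}}$ where I extend $u^{\mathrm{i}}$ by its natural (smooth, since $z\notin\G$) definition near $\G$; then $w$ has no jump across $\G$ (the jump of $(1-\chi)u^{\mathrm{i}}$ matches that of $u^{\mathrm{i}}$ there because $\chi\equiv0$ near $\G$), satisfies the same boundary condition on $\pa D$ and the same radiation conditions, and solves $\Delta w+k^2w = -(\Delta+k^2)\big((1-\chi)u^{\mathrm{i}}\big)=:F$ in $\R^2\ba D$. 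Because $(\Delta+k_1^2)u^{\mathrm{i}}=0$ off $z$, all derivatives landing on $u^{\mathrm{i}}$ in $F$ come paired with derivatives of $\chi$, so $F$ is supported in $\mathrm{supp}\,\nabla\chi\subset\Om_h\ba\ov{B_\delta(z_0)}$, where $u^{\mathrm{i}}$ is smooth. Thus $F=\chi' f$ for a fixed cutoff $\chi'$ with compact support $K'\subset\Om_h$ and $f\in H^{-1}(\R^2)$ with $\|f\|_{H^{-1}(\R^2)}\le C\|u^{\mathrm{i}}(\cdot;z)\|_{H^1(\mathrm{supp}\,\nabla\chi)}\le C$ uniformly in $z$ (the worst term is $\nabla\chi\cdot\nabla u^{\mathrm{i}}$, which is in $L^2$, plus lower-order $L^2$ terms; these embed boundedly into $H^{-1}$). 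Now $w\in V_h$ solves \BVP with $g$ replaced by $\chi' f$, so Corollary \ref{coro4_1} gives $\|w\|_{V_h}\le C\|f\|_{H^{-1}(\R^2)}\le C$ with $C$ independent of $z$.

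Finally I would reassemble: on $K$ we have $u=w+(1-\chi)u^{\mathrm{i}}=w$ since $\chi\equiv1$ there, so $\|u(\cdot;z)\|_{H^1(K)}\le\|w\|_{V_h}\le C$; combined with the incident-field bound this yields $\|u^{\mathrm{t}}(\cdot;z)\|_{H^1(K)}=\|u(\cdot;z)+u^{\mathrm{i}}(\cdot;z)\|_{H^1(K)}\le C$ in $\Om_1$, and $u^{\mathrm{t}}=u$ in $\Om_2\ba D$ is handled identically (no incident part there). I expect the main obstacle to be the bookkeeping at the transmission interface $\G$: one must verify carefully that $w$ is genuinely an element of $V_h$ with \emph{no} interface jump and that the modified source $F$ is indeed supported away from both $z$ and $\G$ so that it lies in $H^{-1}$ with a $z$-uniform bound; the size estimate on $f$ hinges entirely on \eqref{eq4_1} giving $z$-independent control of $u^{\mathrm{i}}$ and its gradient on the fixed compact annular region $\mathrm{supp}\,\nabla\chi$, which is exactly what those inequalities provide once $\delta$ has been enlarged to keep that region at positive distance from $z$.
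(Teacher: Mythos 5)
Your overall strategy (cut the incident field off, push what remains into a compactly supported source, and invoke Corollary \ref{coro4_1} with a $z$-uniform $H^{-1}$ bound coming from \eqref{eq4_1}) is the right one and is essentially the paper's, but the execution has two genuine defects. First, the decomposition is set up on the wrong side. With your cutoff ($\chi\equiv 1$ near $K$, $\chi\equiv 0$ on $B_{\delta}(z_0)$), the function $w=u-(1-\chi)u^{\mathrm i}$ is \emph{not} regular at the source: since $1-\chi\equiv 1$ near $z$ and the scattered field $u$ is smooth there, $w$ carries the full singularity $-u^{\mathrm i}$ at $z$; equivalently, $F=-(\Delta+k^2)\bigl((1-\chi)u^{\mathrm i}\bigr)$ still contains the untruncated distributional term $-(1-\chi)\,\delta_1'(\cdot-z)=-\delta_1'(\cdot-z)$, so $F$ is not supported in $\mathrm{supp}\,\nabla\chi$, $w\notin V_h$, and Corollary \ref{coro4_1} cannot be applied to it. The interface bookkeeping is also inconsistent: you correctly record that $u^+-u^-=-u^{\mathrm i}$ on $\G$, yet $(1-\chi)u^{\mathrm i}$ is a single smooth function near $\G$ (you note its jump equals that of $u^{\mathrm i}$, i.e.\ zero), so subtracting it leaves the jump $-u^{\mathrm i}|_{\G}$ intact. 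The paper avoids both problems by decomposing the \emph{total} field, $u^{\mathrm t}=\chi_0\,G_{k_1}'(\cdot;z)+V$ with $\chi_0\equiv 1$ on $B_{\delta}(z_0)$ and $\chi_0\equiv 0$ outside $B_{\delta+d/2}(z_0)$ (hence on $K$): then $V$ is automatically jump-free (the total field and $\chi_0 u^{\mathrm i}$ are both continuous across $\G$) and free of the singularity.

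Second, and this is the heart of the theorem: even after fixing the decomposition, the source driving $V$ is not only the commutator terms $2\nabla\chi_0\cdot\nabla u^{\mathrm i}+\Delta\chi_0\,u^{\mathrm i}$ living on $\mathrm{supp}\,\nabla\chi_0$. Your assertion that ``all derivatives landing on $u^{\mathrm i}$ come paired with derivatives of $\chi$ because $(\Delta+k_1^2)u^{\mathrm i}=0$ off $z$'' overlooks that in $\Omega_2$ the operator is $\Delta+k_2^2$, which produces the additional term $\chi_0(k_1^2-k_2^2)G_{k_1}'(\cdot;z)$ supported on all of $\mathrm{supp}\,\chi_0\cap\Omega_2$ --- i.e.\ right up against the singularity when $z\to\G$ with $z_0\in\G$, which is precisely the regime the theorem is needed for. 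That term is \emph{not} uniformly in $L^2$; by \eqref{eq4_1} it is only in $L^p$ for $1\le p<2$, uniformly in $z$. Controlling it is exactly why the paper proves the embedding of compactly supported $L^p(\R^2)$ functions into $H^{-1}(\R^2)$, why Corollary \ref{coro4_1} is stated for $H^{-1}$ rather than $L^2$ data, and why the result is later remarked to be restricted to dimensions $n<4$. Your estimate $\|f\|_{H^{-1}}\le C\|u^{\mathrm i}\|_{H^1(\mathrm{supp}\,\nabla\chi)}$ only accounts for the harmless commutator terms, so the claimed uniform bound does not follow from your argument as written.
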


\begin{proof}
We only consider the case that $z_0\in\overline{\Om_1}$, $z\in\Om_1$ and the incident field $u^{\text{i}}$ is a HSPSW.
The other cases can be proved similarly. For the case that the incident field is a PSW, see Remark \ref{rem4_1}.

Take a smooth cut-off function $\chi(x)$ such that $\chi(x)=1$ for $x\in B_{\delta}(z_0),\chi(x)=0$ for
$x\in B^c_{\delta + d/2}(z_0)$, and $0<\chi(x)<1$. Then the total field can be written as
$u^{\text{t}}(x)=\chi G^{\prime}_{k_1}(x;z)+V(x;z)$. It is clear that $V(\cdot;z)$ satisfies
\ben
(\Delta + k^2) V(x;z) = \widetilde{g}_z(x)
\enn
where $\widetilde{g}$ is defined by
\ben
\widetilde{g}_z(x)=\left\{\begin{array}{ll}
-\Delta\chi G^{\prime}_{k_1}(x;z)-2\nabla\chi\cdot\nabla G^{\prime}_{k_1}(x;z) &\text{in}\;\;\Om_1,\\
-\Delta\chi G^{\prime}_{k_1}(x;z)-2\nabla\chi\cdot\nabla G^{\prime}_{k_1}(x;z)
+ \chi(k^2_1 - k^2_2)G^{\prime}_{k_1}(x;z) &\text{in}\;\; \Om_2,
\end{array}\right.
\enn
and by the definition of $\chi$, $V(\cdot;z)$ satisfies the transmission condition \eqref{eq2},
the boundary condition \eqref{eq3} and the radiation conditions \eqref{eq4}-\eqref{eq5}.

We claim that $\widetilde{g}_z\in H^{-1}(\R^2)$ with a compact support. Since
$-\Delta\chi G^{\prime}_{k_1}(\cdot;z)-2\nabla\chi \cdot\nabla G^{\prime}_{k_1}(\cdot;z)\in L^2(\R^2)$ supported
in $B_{\delta + d/2}(z_0)$, we only need to prove that $\widetilde{f}_z\in H^{-1}(\R^2)$ with
$\widetilde{f}_z:=0\;\;\text{in}\;\;\Om_1,\widetilde{f}_z:=\chi(\cdot)(k^2_1-k^2_2)G^{\prime}_{k_1}(\cdot;z)\;\;\text{in}\;\;\Om_2$.
In fact, it is seen from \eqref{eq4_1} that $G^{\prime}_{k_1}(\cdot;z)\in L^{p}_{loc}(\R^2)$ with $1\leq p<2$.
Then, taking $1<p<2$, we have $\widetilde{f}_z\in L^p(\R^2)$ with a compact support. By the standard Sobolev embedding
theorem, we have that $H^1_0(\R^2)\hookrightarrow L^q$ for all $2\leq q<\infty$.
Thus $\widetilde{f}_z\in L^p(\R^2)\subset H^{-1}(\R^2)$ for $1<p<2$. Furthermore, we have
\ben
\|\widetilde{g}\|_{H^{-1}(\R^2)}\leq\|\chi\|_{H^2(B_{\delta+d/2}(z_0)
\ba B_{\delta}(z_0))}\|G^{\prime}_{k_1}\|_{H^1( B_{\delta + d/2}(z_0)\ba B_{\delta}(z_0))}\\
+|k^2_1-k^2_2|\|\chi\|_{H^1(B_{\delta+d/2}(z_0))}\|G_{k_1}\|_{L^2(B_{\delta+d/2}(z_0))}\leq C(k_1,k_2,\delta,d)
\enn
Since $\widetilde{g}_z$ is compactly supported in $B_{\delta + d/2}(z_0)$, then for another smooth cut-off function
$\widetilde{\chi}$ such that $\widetilde{\chi} = 1$ in $B_{\delta + d/2}(z_0)$ and $\widetilde{\chi}=0$ outside
$B_{\delta+ 3d/4}(z_0)$, we have $\widetilde{\chi}\widetilde{g}_z = \widetilde{g}_z$.
From Corollary \ref{coro4_1}, we see that $\|V(x;z)\|_{\widetilde{V_h}}\leq C(\delta,d)\|\widetilde{g}\|_{H^{-1}(\R^2)}
\leq C(k_1,k_2,\delta,d)$ for any $z\in B_{\delta}(z_0)\cap\Om_1$.
Since $u^{\text{t}}(x)=\chi G^{\prime}_{k_1}(x;z)+V(x;z)$, and by the definition of $\chi$, \eqref{eq4_3} holds.
\end{proof}

\begin{remark}\label{rem4_1}{\rm
(i) For \SP due to PSW, by the same argument used in Theorem \ref{thm4_1}, we can conclude that
$u^{\text{t}}(\cdot;z)\in H^2(K)$ and $\|u^{\text{t}}(\cdot;z)\|_{H^2(K)}$ is bounded uniformly with respect to $z$.

(ii) In the case that $z \in B_{\delta}(z_0)$ with $\text{dist}(B_{\delta}(z_0),\G)=d>0$, $u(\cdot;z)\in H_{loc}^1(\Om_j),$
$j=1,2$, and $\|u(\cdot;z)\|_{H_{loc}^1(\Om_j)}$ is bounded uniformly with respect to $z$.
}
\end{remark}

Denote by $u(\cdot;z)$ ($u^{\text{t}}(\cdot;z)$) the scattered (total) field corresponding to an incident PSW 
with the source position $z$ and by $u^{\prime}(\cdot;z)$ ($u^{\prime \text{t}}(\cdot;z)$) the scattered (total) 
field corresponding to an incident HSPSW positioned at $z$. 
Define $\widehat{V_h}:=\{u\;|\;u|_{\Om_{1,h}}\in H^1(\Om_{1,h}),u|_{\Om_{2,h}}\in H^1(\Om_{2,h}),u|_{\G_2}=0\}$ 
with the norm $\|u\|_{\widehat{V_h}}=\|u\|_{H^1(\Om_{1,h})}+\|u\|_{H^1(\Om_{2,h})}$. The following Theorem gives 
an important relation between $u(\cdot;z)$ and $u^{\prime}(\cdot;z)$.

\begin{theorem}\label{thm3}
For $z\in\Om_1$, the limit
\ben
\frac{\pa u(\cdot;z)}{\pa z_1}:=\lim_{\varepsilon\rightarrow 0}\frac{u(\cdot;z+\varepsilon\mathbf{e}_1)-u(\cdot;z)}{\varepsilon}
\enn
exists in $\widehat{V_h}$, where $\mathbf{e}_1=(1,0)^T$. Further, $u^{\prime}(\cdot;z)=-\frac{\pa u(\cdot;z)}{\pa z_1}$
\end{theorem}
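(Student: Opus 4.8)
The plan is to show that the difference quotient of the scattered field converges in $\widehat{V_h}$ to the scattered field generated by the HSPSW, by exploiting the linearity of \SP and the uniform estimates of Theorem \ref{thm4_1} together with the a priori bound of Corollary \ref{coro4_1}. First I would fix $z\in\Om_1$, choose $\delta_0$ with $0<\delta_0<\mathrm{dist}(z,\G)$ and $\varepsilon$ small enough that $z+\varepsilon\mathbf{e}_1$ still lies in $B_{\delta_0/2}(z)$, say, so that all source positions stay away from $\G$ and $D$ by a fixed amount. Set
\ben
w_\varepsilon(x):=\frac{u(x;z+\varepsilon\mathbf{e}_1)-u(x;z)}{\varepsilon}.
\enn
By linearity of the transmission/boundary problem for the scattered field, $w_\varepsilon$ is itself the scattered field of the direct problem whose transmission data on $\G$ are
\ben
\frac{G_{k_1}(\cdot;z+\varepsilon\mathbf{e}_1)-G_{k_1}(\cdot;z)}{\varepsilon}
\quad\text{and its normal derivative},
\enn
i.e. the difference quotient of the incident PSW. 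Since $\mathrm{dist}(z,\G)\geq\delta_0$, the incident PSW and all its difference quotients are smooth near $\G$ and near $D$; one reformulates $w_\varepsilon$ (as in the passage preceding Theorem \ref{thm4_1}) as the \BVP solution with a compactly supported source obtained by cutting off $[\,G_{k_1}(\cdot;z+\varepsilon\mathbf{e}_1)-G_{k_1}(\cdot;z)\,]/\varepsilon$, which by the gradient bounds in \eqref{eq4_1} converges in $L^2$ (indeed in $L^p$, $1<p<2$, hence in $H^{-1}$) to the corresponding cut-off of $G'_{k_1}(\cdot;z)=\pa_{z_1}G_{k_1}(\cdot;z)$.

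The next step is to identify the limit and prove convergence. Let $w(x):=u'(x;z)$ be the scattered field of \SP for the HSPSW at $z$, which exists in $\widehat{V_h}$ by the construction in Section \ref{sec4}. Then $w_\varepsilon-w$ solves \BVP with right-hand side the difference between the cut-off of the PSW-difference-quotient and the cut-off of $G'_{k_1}(\cdot;z)$; call this source $r_\varepsilon\in H^{-1}(\R^2)$, compactly supported away from $\G$ and $D$. By Corollary \ref{coro4_1},
\ben
\|w_\varepsilon-w\|_{\widehat{V_h}}\leq\|w_\varepsilon-w\|_{V_h}\leq C\|r_\varepsilon\|_{H^{-1}(\R^2)},
\enn
with $C$ independent of $\varepsilon$. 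So it suffices to show $\|r_\varepsilon\|_{H^{-1}(\R^2)}\to0$. This reduces, after distributing the cut-off $\chi$ and the Laplacian, to showing that
\ben
\frac{G_{k_1}(\cdot;z+\varepsilon\mathbf{e}_1)-G_{k_1}(\cdot;z)}{\varepsilon}\longrightarrow \pa_{z_1}G_{k_1}(\cdot;z)
\enn
together with its first spatial derivatives, in the relevant $L^p_{\mathrm{loc}}$ norms (for the terms involving $\Delta\chi$ and $\nabla\chi$, which are supported in the annulus $B_{\delta_0}(z)\setminus B_{\delta_0/2}(z)$ where the Green's function is smooth, ordinary $C^1$-convergence of difference quotients applies; for the $(k_1^2-k_2^2)\chi\,G_{k_1}$ term in $\Om_2$, one needs $L^p$-convergence near the logarithmic/algebraic singularity). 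I would prove the latter by writing the difference quotient as $\int_0^1 \pa_{z_1}G_{k_1}(\cdot;z+t\varepsilon\mathbf{e}_1)\,dt$, using the uniform-in-$t$ singularity bounds from \eqref{eq4_1} to get a dominating $L^p$ function (for $1<p<2$ the singularity $|x-z|^{-1}$ is $L^p_{\mathrm{loc}}$), and invoking dominated convergence.

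Finally, uniqueness of the limit in $\widehat{V_h}$ gives $\pa u(\cdot;z)/\pa z_1=w=u'(\cdot;z)$; I should be careful that the problem statement defines the HSPSW incident field as $\pa\Phi_{k_1}/\pa x_1$, so $G'_{k_1}(\cdot;z)=\pa G_{k_1}(\cdot;z)/\pa x_1$, and when I differentiate the relation $G_{k_1}(x;z)=\Phi_{k_1}(x;z)-\Phi_{k_1}(x;z')$ in $z_1$ I get exactly the HSPSW source (up to the sign coming from $z\mapsto z'$ only affecting $z_2$), which is why $u'(\cdot;z)=-\pa u(\cdot;z)/\pa z_1$ rather than $+$: differentiating the incident \emph{data} $-u^{\mathrm i}$ on $\G$ in $z_1$ and matching with the defining data $-\pa u^{\mathrm i}/\pa z_1$ of the HSPSW problem accounts for the minus sign. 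The main obstacle I anticipate is the $H^{-1}$ (equivalently $L^p$, $1<p<2$) convergence of the difference quotients of $G_{k_1}$ and $\nabla_x G_{k_1}$ near the source singularity, i.e. producing a uniform-in-$\varepsilon$ $L^p_{\mathrm{loc}}$ dominating function so that dominated convergence applies; everything else is linearity plus the already-established a priori estimate of Corollary \ref{coro4_1} and the bounds \eqref{eq4_1}.
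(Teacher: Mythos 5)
Your overall strategy --- cut off the incident field near the source, convert to a \BVP with a controllable right-hand side, and pass to the limit using the a priori estimate --- is the same as the paper's, and your identification of the minus sign via $\pa_{x_1}G_{k_1}=-\pa_{z_1}G_{k_1}$ is correct. But there is a concrete gap in the central estimate. The difference of scattered fields $w_\varepsilon-w$ is \emph{not} a solution of \BVP with source $r_\varepsilon$: it has a nonzero jump $-\bigl[(G_{k_1}(\cdot;z+\varepsilon\mathbf{e}_1)-G_{k_1}(\cdot;z))/\varepsilon-\pa_{z_1}G_{k_1}(\cdot;z)\bigr]$ across $\G$, so it does not even lie in $V_h$, and the chain $\|w_\varepsilon-w\|_{\widehat{V_h}}\le\|w_\varepsilon-w\|_{V_h}\le C\|r_\varepsilon\|_{H^{-1}}$ does not parse. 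The function that actually solves \BVP with the compactly supported source is $V_\varepsilon-V$, where $V_\varepsilon$ is the total field minus $\chi u^{\mathrm i}_\varepsilon$; one then has $w_\varepsilon-w=(V_\varepsilon-V)-(1-\chi)u^{\mathrm i}_\varepsilon$ in $\Om_{1,h}$ with $u^{\mathrm i}_\varepsilon$ the difference-quotient-minus-derivative of the incident PSW. The leftover term $(1-\chi)u^{\mathrm i}_\varepsilon$ lives on the \emph{unbounded} strip $\Om_{1,h}\setminus B(z)$, and showing $\|u^{\mathrm i}_\varepsilon\|_{H^1(\Om_{1,h}\setminus B(z))}\to0$ requires a uniform-in-$\varepsilon$ tail estimate at infinity from \eqref{eq4_1} (choose $M$ so the norm over $|x_1|>M$ is $<\eta/2$ for all small $\varepsilon$, then use local smoothness on $|x_1|\le M$). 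This is exactly the step the paper's proof devotes its final paragraph to, and your proposal omits it entirely.

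A secondary point: the technical difficulty you single out --- $L^p$ ($1<p<2$) convergence of the difference quotients of $G_{k_1}$ and $\nabla G_{k_1}$ \emph{near the source singularity}, and the $(k_1^2-k_2^2)\chi G_{k_1}$ term in $\Om_2$ --- is a non-issue for this theorem. Here $z\in\Om_1$ is fixed with positive distance to $\G$, so the cut-off ball can be taken inside $\Om_1$; then $\chi$ vanishes in $\Om_2$, the source $g_\varepsilon=-(\Delta\chi\,u^{\mathrm i}_\varepsilon+2\nabla\chi\cdot\nabla u^{\mathrm i}_\varepsilon)$ is supported in an annulus where everything is smooth, and plain $L^2$ theory (Theorem \ref{thm2}) suffices; Corollary \ref{coro4_1} and the $H^{-1}$ machinery are only needed in Theorem \ref{thm4_1}, where the source position approaches $\G$. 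So you have imported effort where none is needed and left out the estimate at infinity where the real work lies.
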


\begin{proof}
It is sufficient to show that
\ben
\lim_{\varepsilon\rightarrow 0}v_{\varepsilon}:=\lim_{\varepsilon\rightarrow 0}\left(\frac{u(\cdot;z+\varepsilon\mathbf{e}_1)
- u(\cdot;z)}{\varepsilon} + u^{\prime}(\cdot;z)\right) = 0
\enn
in $\widehat{V_h}$.

Noting that $\pa G_{k_1,z}(x;z)/\pa x_1 = - \pa G_{k_1,z}(x;z)/\pa z_1,x\in \G$, it is clear that $v_{\varepsilon}$ is
the solution of the scattering problem \SP with $u^{\text{i}}=u_{\varepsilon}^{\text{i}}:=(G_{k_1,z}(x;z+\varepsilon\mathbf{e}_1)
- G_{k_1,z}(x;z))/\varepsilon-\pa G_{k_1,z}(x;z)/\pa z_1$. Then the total field $u_{\varepsilon}^{\text{t}}=v_{\varepsilon}
+ u_{\varepsilon}^{\text{i}}\;\; \text{in}\;\;\Om_1,\;u_{\varepsilon}^{\text{t}}=v_{\varepsilon}\;\;\text{in}\;\;\Om_2$.
Since $z\in\Om_1$, there exists a $r>0$ such that $\overline{B_{r}(z)}\in\Om_1$. We may assume that $|\varepsilon|<r/8$.
To study the asymptotic property of $v_{\varepsilon}$ as $\varepsilon\rightarrow 0$, we first take a smooth cut-off
function $\chi(x)$, such that $\chi(x)=1$ for $x\in B_{r/4}(z),$ $\chi(x)=0$ for $x\in B^c_{r/2}(z)$.
The total field can be written as $u_{\varepsilon}^{\text{t}}=\chi u_{\varepsilon}^{\text{i}}+V_{\varepsilon}\;\;\text{in}\;\;\Om_1,$ $u_{\varepsilon}^{\text{t}}=V_{\varepsilon}\;\;\text{in}\;\;\Om_2$. Then $V_{\varepsilon}$ satisfies that
\ben
\Delta V_{\varepsilon}+k^2 V_{\varepsilon}=g_{\varepsilon}\quad\text{in}\;\;\R^2,
\enn
where $g_{\varepsilon}=-(\Delta\chi u_{\varepsilon}^{\text{i}}+2\nabla\chi\cdot\nabla u_{\varepsilon}^{\text{i}})$.
Moreover, $V_{\varepsilon}$ satisfies the transmission condition \eqref{eq2}, the boundary condition \eqref{eq3} and
the radiation conditions \eqref{eq4} and \eqref{eq5}. By the definition of $\chi$, we see that $g_{\varepsilon}$ is a
smooth function compactly supported in $B_{r/2}(z)\backslash B_{r/4}(z)$ and satisfies the estimate
\ben
\|g_{\varepsilon}\|_{L^2(\R^2)}=\|g\|_{L^2(B_{r/2}(z)\backslash B_{r/4}(z))}\leq C \|u_{\varepsilon}^{\text{i}}\|_{H^1(B_{r/2}(z)\backslash B_{r/4}(z))}
\enn
where the constant $C$ is independent of $\varepsilon$.
From Theorem \ref{thm2}, we have $\|V_{\varepsilon}\|_{V_h}\leq C\|g_{\varepsilon}\|_{L^2(\R^2)}$.
Then $v_{\varepsilon}=(1-\chi)u_{\varepsilon}^{\text{in}}+V_{\varepsilon}\;\;\text{in}\;\;\Om_1,$
$v_{\varepsilon}=V_{\varepsilon}\;\;\text{in}\;\;\Om_2$. Thus,
\begin{equation}\label{v_ep}
\begin{aligned}
\|v_{\varepsilon}\|_{H^1(\Om_{1,h})}+\|v_{\varepsilon}\|_{H^1(\Om_{2,h})}
\leq C(\|u_{\varepsilon}^{\text{i}}\|_{H^1(\Om_{1,h}\backslash B_{r/2}(z))}+\|V_{\varepsilon}\|_{V_h})\\
\leq C(\|u_{\varepsilon}^{\text{i}}\|_{H^1(\Om_{1,h}\backslash B_{r/2}(z))}+\|g_{\varepsilon}\|_{L^2(\R^2)})
\leq C\|u_{\varepsilon}^{\text{i}}\|_{H^1(\Om_{1,h}\backslash B_{r/4}(z))}
\end{aligned}
\end{equation}

By the asymptotic property of the point source and its derivatives at infinity we have
$u_{\varepsilon}^{\text{i}} \in H^1(\Om_{1,h}\backslash B_{r/4}(z))$. Then for a given $\eta >0$, there exists a $M > 0$
such that $\|u_{\varepsilon}^{\text{i}}\|_{H^1(\R^2\backslash\Om_{1,h}(M))}\leq\eta/2 $ for all $\varepsilon$ satisfying
$|\varepsilon| < r/8$. Since $|\varepsilon|< r/8 $, by the interior elliptic regularity and the fact that
$u_{\varepsilon}^{\text{i}} \in H^1(\Om_{1,h}\backslash B_{r/4}(z))$, there exists a $\delta > 0$ such that  $\|u_{\varepsilon}^{\text{i}}\|_{H^1(\Om_{1,h}(M)\backslash B_{r/4}(z))} < \eta/2$ for $|\varepsilon|<\delta$.
Therefore, for any $\forall\eta >0$ there exists a $\delta >0$ such that
$\|u_{\varepsilon}^{\text{i}}\|_{H^1(\Om_{1,h}\backslash B_{r/4}(z))}<\eta$ for $|\varepsilon|<\delta$
so, by \eqref{v_ep} $\|v_{\varepsilon}\|_{H^1(\Om_{1,h})} + \|v_{\varepsilon}\|_{H^1(\Om_{2,h})}\leq C\eta$.
This means that $v_{\varepsilon}\rightarrow 0$ in $\widehat{V_h}$ as $\varepsilon \rightarrow 0$.
The proof is thus finished.
\end{proof}

\begin{remark}{\rm
Theorem \ref{thm3} also holds in higher dimensions. Nevertheless, Theorem \ref{thm4_1}
can only be proved in two and three dimensions up to now. In fact, for the case that $n \geq 3$ in
Theorem \ref{thm4_1}, $\widetilde{g}_z\in L^p(\R^n)$ with a compact support if $1\leq p<n/(n-1)$
and $H_0^1(\R^n)\hookrightarrow L^{2n/(n-2)}(\R^n)$ or equivalently,
$L^{2n/(n+2)}(\R^n)\hookrightarrow H^{-1}(\R^2)$. Then for a function
$f$ in $L^p(\R^n)$ with $1\leq p\leq\infty $ with a compact support,
by the Cauchy-Schwarz inequality we have that $f\in L^q(\R^n)$ if $1\leq q\leq p$.
Thus we can conclude that $\widetilde{g}_z\in H^{-1}(\R^n)$ if there exists a $p$ such
that $2n/(n+2)<p<n/(n-1)$ which implies that $n < 4$. By Corollary \ref{coro4_1},
Theorem \ref{thm4_1} holds in two and three dimensions. However, it is not clear
whether the same conclusion in dimensions $n \geq 4$ holds since
the solvability of the boundary value problem \BVP with the right hand $g\in L^p(1\leq p<n/(n-1))$ is unknown yet.
}
\end{remark}

\section{The inverse scattering problem}\label{sec5}
\setcounter{equation}{0}

In this section, we consider the inverse problem of recovering the interface and the buried obstacle with its physical 
property simultaneously from the scattered field generated by PSWs. Suppose the scattered fields are generated by PSWs 
with source positions located on the line segment $\Sigma_s\subset\G_b$ and measured on another line segment 
$\Sigma_r\subset\G_c$; see Figure \ref{fig1}. Then the inverse scattering problem can be stated as follows.

{\bf Inverse scattering problem(ISP):} Given the wave numbers $k_j,j=1,2$, and the scattered field $u(x;z)$ for 
$z\in\Sigma_s\subset\G_b, x\in\Sigma_r\subset\G_c$, determine the rough interface $\G$, the obstacle $D$ and its 
physical property $\mathcal{B}$.

In the proof of the uniqueness result for the inverse scattering problem, the recovery of the rough interface will be 
achieved by constructing a special transmission problem called \textbf{Interior Transmission Problem (ITP)}. 
In recent years, there have been a great development on the study of interior transmission problem and the associated 
transmission eigenvalues (see, e.g. \cite{FC2,CCC08,CPS07,S11}). 
Recently, in \cite{YZZ} the interior transmission problem is exploited to study the inverse problem for the bounded 
penetrable obstacle scattering problem. We will use the same idea to prove the uniqueness of the inverse rough surface 
scattering problem. To this end, we first briefly recall some background on the interior transmission problem. 
We define space
\ben
H^1_{\Delta}(\Om):=\{w\in H^1(\Om)\;|\;\Delta w\in L^2(\Om)\}
\enn
equipped with the norm $\|w\|^2_{H^1_{\Delta}(\Om)}=\|w\|^2_{H^1(\Om)}+\|\Delta w\|^2_{L^2(\Om)}$. 
It is clear that $ H^1_{\Delta}(\Om)$ is a Hilbert space. Moreover, a function $w\in H^1_{\Delta}(\Omega)$ has 
traces $\gamma_0 w\in H^{\frac{1}{2}}(\pa\Om)$ and $\gamma_1 w\in H^{-\frac{1}{2}}(\pa\Om)$. 
In particular, we set $H^2_0(\Omega):=\{w\in H^1_\Delta(\Omega)\;|\;\gamma_0 w=\gamma_1 w=0\}$. 
Let $n(x)$ be the index of refraction such that $\im(n)\geq 0$. Suppose that either $1+r_0<\re{(n)}<\infty$
or $0<\re{(n)}<1-r_1$, where $r_0,r_1>0$.

{\bf Interior Transmission Problem (ITP):}
Given $(f_1,f_2)\in\{(\gamma_0 w,\gamma_1 w)\;|\;w\in H^1_{\Delta}(\Om)\}$, find $U,V\in L^2(\Om)$ 
such that $U-V-w\in H^2_0(\Omega)$ and satisfying that
\ben
\Delta V+ k^2V=0\quad\text{in}\;\;\Om,\\
\Delta U+k^2nU=0\quad\text{in}\;\;\Om,\\
U-V=f_1,\;\;\frac{\pa U}{\pa\nu}-\frac{\pa V}{\pa\nu}=f_2\quad\text{on}\;\;\pa\Om
\enn

We say $k^2$ is an interior transmission eigenvalue of \ITP if the homogenous problem has a nonzero solution. 
An interior transmission problem is well-possed if $k^2$ is not an interior transmission eigenvalue. 
In the following theorem, we collect some results about the wellpossedness of \ITP and properties of the 
interior transmission eigenvalues.
    
\begin{theorem}\label{thm4}
\item[(i)] Let $\Omega$ be fixed. If $\im(n)>0$, then \ITP is well-posed. If $\im(n)=0$, then there 
exists an infinite set of transmission eigenvalues of \ITP with the only accumulation point at the infinity. 
Moreover, if $k^2$ is not an eigenvalue, then \ITP has a unique solution $(U,V)\in L^2(\Omega)\times L^2(\Omega)$ 
such that
\be\label{eq:stability}
\|U\|_{L^2(\Om)}+\|V\|_{L^2(\Om)}\leq C(\|f_1\|_{H^{\frac{1}{2}}(\pa\Om)}+\|f_2\|_{H^{-\frac{1}{2}}(\pa\Om)})
\en

\item[(ii)] Let $k>0$ be fixed and assume that $\im(n)=0$. If the diameter of the domain $\Om$ is small enough, 
then $k^2$ can not be an interior transmission eigenvalue of \ITP. Moreover, in such case, the 
estimate \eqref{eq:stability} holds.
\end{theorem}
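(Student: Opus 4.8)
The plan is to treat the two parts by a unified functional-analytic framework, reducing the interior transmission problem to a fourth-order variational problem for a single scalar function. First I would introduce the difference $w_0 := U - V - w \in H^2_0(\Om)$, so that the two Helmholtz equations for $U$ and $V$ can be eliminated: applying $(\Delta + k^2)$ to the transmitted field and $(\Delta + k^2 n)$ appropriately, one derives a single fourth-order equation of the form $(\Delta + k^2 n)\big(\tfrac{1}{n-1}(\Delta + k^2)w_0\big) = F$ in $\Om$, where the right-hand side $F \in L^2(\Om)$ is built from the data $(f_1,f_2)$ via the function $w \in H^1_\Delta(\Om)$ with $\gamma_0 w = f_1$, $\gamma_1 w = f_2$; the estimate $\|w\|_{H^1_\Delta(\Om)} \le C(\|f_1\|_{H^{1/2}(\pa\Om)} + \|f_2\|_{H^{-1/2}(\pa\Om)})$ follows from a bounded right-inverse of the trace map. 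The hypothesis that $\re(n)$ stays bounded away from $1$ (either $1+r_0 < \re(n)$ or $\re(n) < 1-r_1$) guarantees that $1/(n-1)$ is well-defined and bounded, which is exactly what makes this reduction and the ensuing coercivity work.

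Next I would set up the variational formulation in $H^2_0(\Om)$: multiply the fourth-order equation by a test function $\overline{\varphi} \in H^2_0(\Om)$ and integrate by parts twice, using $\gamma_0 w_0 = \gamma_1 w_0 = 0$ to kill all boundary terms. This yields a sesquilinear form $\mathcal{A}_k(w_0,\varphi) = \int_\Om \tfrac{1}{n-1}(\Delta w_0 + k^2 w_0)(\overline{\Delta\varphi + k^2 n \overline\varphi})\,\dd x$, which I would split as $\mathcal{A}_k = \mathcal{B} + k^2\mathcal{C} + k^4\mathcal{D}$ or, more usefully, as a principal part plus a compact perturbation. The principal part $\int_\Om \tfrac{1}{n-1}\Delta w_0\,\overline{\Delta\varphi}\,\dd x$ is coercive on $H^2_0(\Om)$ when $\re(1/(n-1))$ has a definite sign and is bounded below in modulus — precisely the content of the two-sided assumption on $\re(n)$, together with $\im(n) \ge 0$ — so by Lax–Milgram (or its sign-indefinite Babuška variant, as in Theorem \ref{thm1}) the principal part is an isomorphism. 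The lower-order terms involve only $w_0$ and $\nabla w_0$ paired against second derivatives, hence by the compact embeddings $H^2_0(\Om) \hookrightarrow H^1(\Om) \hookrightarrow L^2(\Om)$ the remainder operator is compact. Therefore $\mathcal{A}_k$ is Fredholm of index zero on $H^2_0(\Om)$, and existence reduces to uniqueness.

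For part (i): when $\im(n) > 0$ on a set of positive measure, testing the homogeneous problem against $w_0$ itself and taking imaginary parts forces $\Delta w_0 + k^2 w_0 = 0$ on that set, then by unique continuation for the Helmholtz equation $U = V$ in $\Om$, and the zero Cauchy data on $\pa\Om$ then forces $U = V = 0$; hence \ITP is uniquely solvable for every $k$, i.e. well-posed, and the bound \eqref{eq:stability} is the open-mapping estimate for $\mathcal{A}_k^{-1}$ composed with the data-to-$w$ map. When $\im(n) = 0$, the operator $\mathcal{A}_k$ is a holomorphic Fredholm family in the parameter $k^2$; by the analytic Fredholm theorem its inverse exists except on a discrete set (using that for $k$ in a suitable sector the coercive estimate gives invertibility, so the exceptional set is not everything), and these exceptional values are exactly the transmission eigenvalues, which therefore form a set with no finite accumulation point — discreteness plus the existence of infinitely many such eigenvalues being the known result for this class of $n$ (which I would invoke by citing \cite{CCC08,CPS07,S11}). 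Away from eigenvalues, \eqref{eq:stability} again follows from the open mapping theorem. For part (ii): with $k$ fixed and $\im(n) = 0$, I would show that if $\mathrm{diam}(\Om)$ is small enough then the zero-order and first-order perturbation terms in $\mathcal{A}_k$ — whose operator norms scale like positive powers of $\mathrm{diam}(\Om)$ via the Poincaré inequality on $H^2_0(\Om)$, since $\|w_0\|_{L^2} \le C\,\mathrm{diam}(\Om)^2\|\Delta w_0\|_{L^2}$ and $\|\nabla w_0\|_{L^2} \le C\,\mathrm{diam}(\Om)\|\Delta w_0\|_{L^2}$ — become strictly dominated by the coercivity constant of the principal part. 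Hence $\mathcal{A}_k$ is itself invertible (no Fredholm alternative needed), so $k^2$ is not a transmission eigenvalue and \eqref{eq:stability} holds with a constant depending only on $k$, $n$ and the (small) diameter.

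The main obstacle I anticipate is the coercivity of the principal part and the careful bookkeeping of the boundary integrations by parts: one must verify that the $H^2_0(\Om)$ inner product genuinely controls $\|\Delta w_0\|_{L^2(\Om)}$ (which is standard via elliptic regularity for the biharmonic problem with clamped conditions, but uses $\pa\Om$ sufficiently regular), and that the sign condition on $\re(n) - 1$ is used with the correct orientation in both the $1+r_0 < \re(n)$ and the $0 < \re(n) < 1-r_1$ cases — in the latter case the form is coercive with a flipped sign, which is why the two regimes appear symmetrically in the hypothesis. The diameter-smallness argument in (ii) is then essentially a quantitative perturbation of this coercivity, and its only subtlety is making the Poincaré constants on $H^2_0(\Om)$ depend on the diameter in the stated homogeneous way, which a rescaling $x \mapsto x/\mathrm{diam}(\Om)$ makes transparent.
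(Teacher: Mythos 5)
The paper does not actually prove this theorem: part (i) is quoted from Cakoni--Gintides--Haddar \cite{FC2} and part (ii) from \cite{YZZ}, so there is no in-paper argument to compare against line by line. Your sketch reconstructs exactly the strategy of those references --- the reduction of \ITP to a fourth-order problem for $w_0=U-V-w\in H^2_0(\Om)$, the sesquilinear form $\int_\Om\frac{1}{n-1}(\Delta w_0+k^2w_0)\overline{(\Delta\varphi+k^2n\varphi)}\dd x$, Fredholmness of index zero from a coercive principal part (using that $\re(n)$ is bounded away from $1$ on both sides) plus lower-order terms made compact by $H^2_0\hookrightarrow H^1\hookrightarrow L^2$, analytic Fredholm theory for discreteness when $\im(n)=0$, and the rescaling $\|w_0\|_{L^2}\le C\,\mathrm{diam}(\Om)^2\|\Delta w_0\|_{L^2}$ on $H^2_0(\Om)$ to absorb the perturbation for small domains in (ii) --- so it is the ``standard'' proof and is correct as an outline. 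Two points deserve tightening. First, in (i) your claim that testing the homogeneous form with $w_0$ and taking imaginary parts ``forces $\Delta w_0+k^2w_0=0$'' on $\{\im(n)>0\}$ is not immediate as stated, because the lower-order term $k^2\int\frac{\overline{n-1}}{n-1}(\Delta w_0+k^2w_0)\overline{w_0}$ is not real where $\im(n)\neq0$; the clean route is the classical Green's-identity argument applied directly to $U$ and $\overline{U}$ (and to $V$ and $\overline{V}$), which yields $\int_\Om\im(n)|U|^2=0$, hence $U\equiv0$ by unique continuation and then $V\equiv0$ from the vanishing Cauchy data. Second, the existence of \emph{infinitely many} real transmission eigenvalues is a genuinely hard result that cannot be extracted from analytic Fredholm theory alone (which gives only discreteness); you correctly flag that this must be imported from \cite{FC2}, which is precisely what the paper does. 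With those caveats your proposal matches the cited sources' approach.
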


The proof of Theorem \ref{thm4} (i) can be found in \cite{FC2}. Theorem \ref{thm4} (ii) was proved in \cite{YZZ}. 
The following reciprocity relation about the total field (as well as the scattered field) induced by the point sources 
will also be useful.

\begin{theorem}{\bf (Reciprocity relation)}
For $z_1,z_2 \in \R^2\backslash \{\G \cup \overline{D}\}$ and $z_1 \neq z_2$, the total field satisfies
\ben
u^{\text{t}}(z_1;z_2) = u^{\text{t}}(z_2;z_1)
\enn
\end{theorem}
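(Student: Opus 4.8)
The plan is to derive the identity from the symmetric (unconjugated) Green's second theorem applied to $u_1:=u^{\text{t}}(\cdot;z_1)$ and $u_2:=u^{\text{t}}(\cdot;z_2)$, using that both solve $\Delta u_j+k^2u_j=0$ with the \emph{same} piecewise-constant wave number away from the sources, and then reading off $u^{\text{t}}(z_1;z_2)$ and $u^{\text{t}}(z_2;z_1)$ as the residues at $z_1$ and $z_2$. I treat the case $z_1,z_2\in\Om_1$; the remaining cases are identical once the incident field is replaced by the fundamental solution of the half-space containing the source. Enlarging $h$ so that $z_1,z_2$ lie strictly inside $\{|x_2|<h\}$, each $u_j$ satisfies the homogeneous transmission conditions \eqref{eq2} on $\G$, the boundary condition \eqref{eq3} on $\pa D$, and the angular spectrum representations \eqref{eq4}--\eqref{eq5}; moreover, near $z_j$ one has $u_j=\Phi_{k_1}(\cdot;z_j)+w_j$ with $w_j$ smooth (the image term of the Dirichlet Green's function being singular only at a point below $\G$, and the scattered field being real-analytic in $\Om_1$), while $u_2$ is smooth near $z_1$ and $u_1$ near $z_2$ since $z_1\neq z_2$.

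Fix $\vep>0$ so small that $\ov{B_\vep(z_1)}$ and $\ov{B_\vep(z_2)}$ are disjoint and meet neither $\G$ nor $\pa D$ nor $\G_{\pm h}$, and for large $M$ work in the truncated domain $\Om_h(M,\vep):=\bigl(\Om_h\cap\{|x_1|<M\}\bigr)\ba\bigl(\ov{B_\vep(z_1)}\cup\ov{B_\vep(z_2)}\bigr)$. Since $\G,\pa D\in C^2$ and $\Delta u_j=-k^2u_j\in L^2_{\mathrm{loc}}$, elliptic regularity gives $u_j\in H^2_{\mathrm{loc}}$ up to the boundary on each side of $\G$, so Green's second theorem may be applied in the two halves $\Om_{1,h}(M,\vep)$ and $\Om_{2,h}(M,\vep)$ (the parts of $\Om_h(M,\vep)$ above and below $\G$) separately. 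Adding the two identities and using $u_1\Delta u_2-u_2\Delta u_1=-k^2(u_1u_2-u_2u_1)=0$ yields
\[
\Int_{\pa\Om_h(M,\vep)}\Bigl(u_1\frac{\pa u_2}{\pa\nu}-u_2\frac{\pa u_1}{\pa\nu}\Bigr)\dd s=0,
\]
the two interfacial contributions over $\G$ having already cancelled against one another by the transmission conditions \eqref{eq2} (continuity of the trace and of the normal derivative across $\G$).

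I then show that every piece of $\pa\Om_h(M,\vep)$ other than the two spheres contributes nothing in the limit. On $\pa D$: on $\G_2$ one has $u_1=u_2=0$, while on $\G_1$ the condition \eqref{eq3} gives $\pa u_j/\pa\nu=-i\beta u_j$, so $u_1\,\pa u_2/\pa\nu-u_2\,\pa u_1/\pa\nu=-i\beta u_1u_2+i\beta u_2u_1=0$. On $\G_h$: Lemma \ref{lem2} gives $u_j|_{\G_h}\in H^{1/2}(\G_h)$ and, together with the Helmholtz equation in $U^+_h$, $\pa u_j/\pa x_2=T_1u_j$ on $\G_h$; since the outward normal is $\mathbf{e}_1$'s partner $\mathbf{e}_2$, the contribution is $\Int_{\G_h}(u_1T_1u_2-u_2T_1u_1)\dd s$, which vanishes by the symmetry relation \eqref{eq10}. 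On $\G_{-h}$ one similarly has $\pa u_j/\pa\nu=T_2u_j$ and $\Int_{\G_{-h}}(u_1T_2u_2-u_2T_2u_1)\dd s=0$ by \eqref{eq10}. For the vertical sides $\g_i(\pm M)$: the pointwise bounds \eqref{eq4_1} on $G_{k_1}$ and its gradient together with the $H^1$-regularity of the scattered part give $u_j\in H^1$ of the relevant half-slab minus a neighbourhood of $z_j$; hence $M\mapsto\Int_{\{x_1=M\}\cap\Om_{i,h}}(|u_j|^2+|\nabla u_j|^2)\dd s$ is integrable on $(M_0,\infty)$, so along a suitable sequence $M_n\to\infty$ the side integrals tend to $0$ by Cauchy--Schwarz.

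It remains to evaluate the sphere terms, which carry the whole content of the identity. With $\nu$ the outward normal of $\Om_h(M,\vep)$ (pointing into the balls) and $\Phi_{k_1}(x;p)=-\tfrac{1}{2\pi}\log|x-p|+O(1)$, the classical residue computation gives, as $\vep\to0$,
\[
\Int_{\pa B_\vep(z_1)}\Bigl(u_1\frac{\pa u_2}{\pa\nu}-u_2\frac{\pa u_1}{\pa\nu}\Bigr)\dd s\longrightarrow -u_2(z_1)=-u^{\text{t}}(z_1;z_2),
\]
since near $z_1$ the singular function is $u_1$ and $u_2$ is smooth; at $z_2$ the roles are reversed, and applying the same computation to $u_2\,\pa u_1/\pa\nu-u_1\,\pa u_2/\pa\nu$ gives
\[
\Int_{\pa B_\vep(z_2)}\Bigl(u_1\frac{\pa u_2}{\pa\nu}-u_2\frac{\pa u_1}{\pa\nu}\Bigr)\dd s\longrightarrow +u_1(z_2)=+u^{\text{t}}(z_2;z_1).
\]
Letting $M=M_n\to\infty$ and then $\vep\to0$ in the boundary identity leaves $-u^{\text{t}}(z_1;z_2)+u^{\text{t}}(z_2;z_1)=0$, which is the assertion; subtracting the (symmetric) free-space relation $G_{k_1}(z_1;z_2)=G_{k_1}(z_2;z_1)$ then yields reciprocity of the scattered field too. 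The main obstacle will be bookkeeping rather than conceptual: keeping track of the normal orientations so that the residues at $z_1$ and $z_2$ emerge with opposite signs, and justifying the cancellations on the artificial boundaries $\G_{\pm h}$ and $\g_i(\pm M)$ with only $H^1$ and $H^{1/2}$ regularity available --- both of which are secured by the symmetry relations of Lemma \ref{lem0}(iii) and the finiteness of $\|u_j\|_{H^1}$ on the truncated slab.
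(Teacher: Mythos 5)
Your proposal is correct and follows essentially the same route as the paper's own proof: Green's second identity in the truncated slab with the source balls excised, cancellation on $\G$ and $\pa D$ via the transmission and (symmetric) impedance/Dirichlet conditions, vanishing of the $\G_{\pm h}$ terms via the DtN symmetry \eqref{eq10}, decay of the lateral terms as $M\to\infty$, and the classical residue computation at the two spheres. The only differences are which case of source locations is written out first and minor bookkeeping of normal orientations, neither of which changes the argument.
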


\begin{proof}
We only consider the case $z_1 \in \Om_1, z_2\in \Om_2\backslash \overline{D}$, the other cases can be treated similarly.

For $A > 0$ and $h > \text{max}(|z_1|,|z_2|)$, take $\varepsilon > 0$ such that $B_{\varepsilon}(z_1) \subset \Om_{1,h}(A)$
and $B_{\varepsilon}(z_2)\subset\Om_{2,h}(A)$. Since $u^{\text{t}}(\cdot;z_j)\in H^2_{\text{loc}}(\Om_{j,h}(A)\backslash B_{\varepsilon}(z_j))$,
we apply Green's first theorem to $u^{\text{t}}(x;z_1)$ and $u^{\text{t}}(x;z_2)$ in
$\Om_{1,h}(A)\backslash B_{\varepsilon}(z_1)$ to get
\begin{equation}\label{eq55}
\begin{aligned}
0 = \Int_{\Om_{1,h}(A)}\left(u^{\text{t}}(x;z_1)\Delta u^{\text{t}}(x;z_2)
- u^{\text{t}}(x;z_2)\Delta u^{\text{t}}(x;z_1)\right) \dd x = \\
\Int_{\G_{h}(A)}\left(u^{\text{t}}(x;z_1)\frac{\pa u^{\text{t}}}{\pa \nu}(x;z_2)
- u^{\text{t}}(x;z_2)\frac{\pa u^{\text{t}}}{\pa \nu}(x;z_1)\right) \dd s + R_1(A) - \\
\Int_{\G(A)}\left(u^{\text{t}}(x;z_1)|^+\frac{\pa u^{\text{t}}}{\pa \nu}(x;z_2)|^+
- u^{\text{t}}(x;z_2)|^+\frac{\pa u^{\text{t}}}{\pa \nu}(x;z_1)|^+\right)\dd s - \\
\Int_{\pa B_{\varepsilon}(z_1)} \left( u^{\text{t}}(x;z_1)\frac{\pa u^{\text{t}}}{\pa \nu}(x;z_2)
- u^{\text{t}}(x;z_2)\frac{\pa u^{\text{t}}}{\pa \nu}(x;z_1)\right)\dd s
\end{aligned}
\end{equation}
where
\ben
R_1(A) := [\Int_{\g_1(A)} - \Int_{\g_1(-A)}]\left(u^{\text{t}}(x;z_1)\frac{\pa u^{\text{t}}}{\pa x_2}(x;z_2)
-u^{\text{t}}(x;z_2)\frac{\pa u^{\text{t}}}{\pa x_2}(x;z_1)\right)\dd s
\enn

Similarly, applying Green's first theorem to $u^{\text{t}}(\cdot;z_1)$ and $u^{\text{t}}(\cdot;z_2)$
in $\Om_{2,h}(A)\backslash B_{\varepsilon}(z_2)$ gives
\begin{equation}\label{eq56}
\begin{aligned}
0 = \Int_{\Om_{2,h}(A)}\left(u^{\text{t}}(x;z_1)\Delta u^{\text{t}}(x;z_2)
- u^{\text{t}}(x;z_2)\Delta u^{\text{t}}(x;z_1)\right) \dd x = \\
\Int_{\G_{-h}(A)}\left(u^{\text{t}}(x;z_1)\frac{\pa u^{\text{t}}}{\pa \nu}(x;z_2)
- u^{\text{t}}(x;z_2)\frac{\pa u^{\text{t}}}{\pa \nu}(x;z_1)\right)\dd s + R_2(A) + \\
\Int_{\G(A)}\left(u^{\text{t}}(x;z_1)|^- \frac{\pa u^{\text{t}}}{\pa \nu}(x;z_2)|^-
- u^{\text{t}}(x;z_2)|^-\frac{\pa u^{\text{t}}}{\pa \nu}(x;z_1)|^-\right) \dd s - \\
\Int_{\pa B_{\varepsilon}(z_2)}\left(u^{\text{t}}(x;z_1)\frac{\pa u^{\text{t}}}{\pa \nu}(x;z_2)
- u^{\text{t}}(x;z_2)\frac{\pa u^{\text{t}}}{\pa \nu}(x;z_1)\right)\dd s - \\
\Int_{\pa D}\left(u^{\text{t}}(x;z_1)\frac{\pa u^{\text{t}}}{\pa \nu}(x;z_2)
- u^{\text{t}}(x;z_2)\frac{\pa u^{\text{t}}}{\pa \nu}(x;z_1)\right)\dd s
\end{aligned}
\end{equation}
where
\ben
R_2(A) := [\Int_{\g_2(A)} - \Int_{\g_2(-A)}]\left(u^{\text{t}}(x;z_1)\frac{\pa u^{\text{t}}}{\pa x_2}(x;z_2)
- u^{\text{t}}(x;z_2)\frac{\pa u^{\text{t}}}{\pa x_2}(x;z_1)\right)\dd s
\enn
Since $u^{\text{t}}(\cdot;z_1)$ and $u^{\text{t}}(\cdot;z_2)$ satisfy the Helmholtz equation in
$\Om_{2,h}(A)\backslash B_{\varepsilon}(z_2)$, the transmission conditions on $\G$ and the boundary
conditions on $\pa D$, \eqref{eq55} and \eqref{eq56} yields
\begin{equation}\label{eq61}
\begin{aligned}
0 = \Int_{\G_{h}(A)}\left(u^{\text{t}}(x;z_1)\frac{\pa u^{\text{t}}(x;z_2)}{\pa \nu(x)}
- u^{\text{t}}(x;z_2)\frac{\pa u^{\text{t}}(x;z_1)}{\pa \nu(x)}\right)\dd s + \\
\Int_{\G_{-h}(A)}\left(u^{\text{t}}(x;z_1)\frac{\pa u^{\text{t}}}{\pa \nu}(x;z_2)
- u^{\text{t}}(x;z_2)\frac{\pa u^{\text{t}}}{\pa \nu}(x;z_1)\right)\dd s + \\
R_1(A) -  \Int_{\pa B_{\varepsilon}(z_1)}\left(u^{\text{t}}(x;z_1)\frac{\pa u^{\text{t}}}{\pa \nu}(x;z_2)
- u^{\text{t}}(x;z_2)\frac{\pa u^{\text{t}}}{\pa \nu}(x;z_1)\right)\dd s + \\
R_2(A) -\Int_{\pa B_{\varepsilon}(z_2)}\left(u^{\text{t}}(x;z_1)\frac{\pa u^{\text{t}}}{\pa \nu}(x;z_2)
- u^{\text{t}}(x;z_2)\frac{\pa u^{\text{t}}}{\pa \nu}(x;z_1) \right)\dd s.
\end{aligned}
\end{equation}
As $\varepsilon \rightarrow 0$, we have
\begin{equation*}
\begin{aligned}
\Int_{\pa B_{\varepsilon}(z_1)}\left(u^{\text{t}}(x;z_1)\frac{\pa u^{\text{t}}}{\pa \nu}(x;z_2)
- u^{\text{t}}(x;z_2)\frac{\pa u^{\text{t}}}{\pa \nu}(x;z_1)\right)\dd s = \\
\Int_{\pa B_{\varepsilon}(z_1)}\left(G_{k_1}(x;z_1)\frac{\pa u^{\text{t}}}{\pa \nu}(x;z_2)
- u^{\text{t}}(x;z_2)\frac{\pa G_{k_1}}{\pa \nu}(x;z_1)\right)\dd s + \\
\Int_{\pa B_{\varepsilon}(z_1)}\left(u(x;z_1)\frac{\pa u^{\text{t}}}{\pa \nu}(x;z_2)
- u^{\text{t}}(x;z_2)\frac{\pa u}{\pa \nu}(x;z_1)\right)\dd s\rightarrow u^{\text{t}}(z_1;z_2)
\end{aligned}
\end{equation*}
where we have used Theorem 2.1 in \cite{Colton}.

Similarly,
\begin{equation*}
\begin{aligned}
\Int_{\pa B_{\varepsilon}(z_2)}\left(u^{\text{t}}(x;z_1)\frac{\pa u^{\text{t}}}{\pa \nu}(x;z_2)
- u^{\text{t}}(x;z_2)\frac{\pa u^{\text{t}}}{\pa \nu}(x;z_1)\right)\dd s
\rightarrow -u^{\text{t}}(z_2;z_1) \quad \text{as}\;\;\varepsilon \rightarrow 0.
\end{aligned}
\end{equation*}
Since $u^{\text{t}}(\cdot;z_j)\in \widetilde{V_h},\ j=1,2$ and satisfy the angular-spectrum
representation \eqref{eq4} and \eqref{eq5}, and by \eqref{eq10} in Lemma \ref{lem0}, we have
\ben
&&\Int_{\G_{h}(A)}\left(u^{\text{t}}(x;z_1)\frac{\pa u^{\text{t}}}{\pa \nu}(x;z_2)
- u^{\text{t}}(x;z_2)\frac{\pa u^{\text{t}}}{\pa \nu}(x;z_1)\right)\dd s\\
&&\qquad\qquad=\Int_{\G_{-h}(A)}\left(u^{\text{t}}(x;z_1)\frac{\pa u^{\text{t}}}{\pa \nu}(x;z_2)
- u^{\text{t}}(x;z_2)\frac{\pa u^{\text{t}}}{\pa \nu}(x;z_1)\right)\dd s\rightarrow 0
\enn
as $A \rightarrow +\infty$. Noting that $u^{\text{t}}(\cdot,z_i) \in \widetilde{V_h}$,
it follows from the asymptotic properties of $G_{k_1}(\cdot,z_i)\ i = 1,2$ and their derivatives that
$R_j(A)\rightarrow 0 \ \text{as}\ A\rightarrow +\infty, \ j = 1,2.$
Thus, $u^{\text{t}}(z_2;z_1) = u^{\text{t}}(z_1;z_2)$ follows from \eqref{eq61} by letting $\varepsilon \rightarrow 0$
and $A \rightarrow +\infty$. The proof is complete.
\end{proof}

\begin{remark}
By the symmetry of $G_{k}(x,z)$, we also have the reciprocity relation of the scattered field.
\be\label{eq_rr}
u(z_1;z_2) = u(z_2;z_1),\quad z_1,z_2\in\Om_1\;\;\text{or}\;\;\Om_2\ba\ov{D}\;\;\text{and}\;\;z_1\neq z_2
\en
\end{remark}

Suppose that $\G$ and $\widetilde{\G}$ are two rough interfaces and that $D$ and $\widetilde{D}$ are two
impenetrable obstacles with the boundary physical property $\mathcal{B}$ and $\widetilde{\mathcal{B}}$ respectively.
Define $\widetilde{u}(\cdot;z)$ and $\widetilde{u}^{\text{t}}(\cdot;z)$ to be the scattered and total field
due to the PSW and given by the scattering problem \SP with $\widetilde{\G},\widetilde{\Om_1},\widetilde{\Om_2},\widetilde{D},\widetilde{\mathcal{B}}$.
The fields $u',u'^{\text{t}},\widetilde{u}', \widetilde{u}'^{\text{t}}$ due to the HSPSW can be defined accordingly.

We now have the uniqueness result for the inverse scattering problem.

\begin{theorem}\label{thm5_1}
If the scattered field $u(x;z)=\widetilde{u}(x;z)$ for all $z\in\Sigma_s\subset\G_b$
and $x\in\Sigma_r\subset\G_c$, then $\G=\widetilde{\G}, D=\widetilde{D},\mathcal{B}=\widetilde{\mathcal{B}}$.
\end{theorem}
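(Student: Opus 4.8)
\textit{Overall plan.} I would follow the scheme of \cite{YZZ}, adapted to the present geometry, in three stages: (1) upgrade the equality of scattered fields on $\Sigma_s,\Sigma_r$ to an identity on a full open set; (2) recover the interface, $\G=\widetilde\G$, by a local interior transmission problem; (3) recover the obstacle and its boundary condition, $D=\widetilde D$, $\mathcal B=\widetilde{\mathcal B}$, by a singular–source blow-up. Throughout, let $G$ denote the connected unbounded region lying strictly above both $\G$ and $\widetilde\G$ (so $\Sigma_s\subset\G_b$ and $\Sigma_r\subset\G_c$ lie in $G$), and let $\Om^-$ denote the open region below $\G$.

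\textit{Stage 1.} Fix $z\in\Sigma_s$ and set $w_z:=u(\cdot;z)-\widetilde u(\cdot;z)$. In $U_h^+$, $w_z$ satisfies the representation \eqref{eq4}, so $\mathcal F(w_z|_{\G_c})(\xi)=e^{i(c-h)\sqrt{k_1^2-\xi^2}}\mathcal F(w_z|_{\G_h})(\xi)$ decays exponentially for $|\xi|>k_1$; hence $w_z|_{\G_c}$ extends holomorphically to a strip, and since it vanishes on the segment $\Sigma_r$ it vanishes on $\G_c$, whence $w_z\equiv0$ in $U_h^+$ and then in $G$ by unique continuation. Using the reciprocity relation \eqref{eq_rr} to interchange source and receiver and repeating the argument — first for fixed $x\in\Sigma_r$ with $z$ ranging over $\Sigma_s\subset\G_b$, then for fixed $z\in G$ with $x$ free — one gets $u(x;z)=\widetilde u(x;z)$ for all $x,z\in G$, hence $u^{\text t}(\cdot;z)=\widetilde u^{\text t}(\cdot;z)$ in $G$ for every $z\in G$; differentiating in $z_1$ and invoking Theorem \ref{thm3}, the same holds for the HSPSW total fields $u'^{\text t},\widetilde u'^{\text t}$.

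\textit{Stage 2.} Suppose $\G\ne\widetilde\G$. Since both are graphs, after possibly interchanging the two configurations there is $x^*\in\G$ strictly above $\widetilde\G$, i.e.\ $x^*\in\G\cap\widetilde\Om_1$, and $\mathrm{dist}(x^*,\overline D)>0$ by the standing assumptions. Choose $\rho>0$ so small that $\overline{B_{2\rho}(x^*)}\subset\widetilde\Om_1$, $\overline{B_{2\rho}(x^*)}\cap\overline{D\cup\widetilde D}=\varnothing$, $\G$ splits $B_{2\rho}(x^*)$ into $B^\pm:=B_{2\rho}(x^*)\cap\Om_1$, $B_{2\rho}(x^*)\cap\Om^-$, and, by Theorem \ref{thm4} (shrinking $\rho$ if needed), $k_1^2$ is not an interior transmission eigenvalue of \ITP on $\Om:=B^-$ with index $n:=k_2^2/k_1^2\ne1$. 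Take the incident wave to be a HSPSW with source $z\in B^+\cap B_\rho(x^*)$, and put $U:=u^{\text t}(\cdot;z)|_\Om$, $V:=\widetilde u^{\text t}(\cdot;z)|_\Om$, so $\Delta V+k_1^2V=0$, $\Delta U+k_1^2nU=0$ in $\Om$. Since $u^{\text t}(\cdot;z)=\widetilde u^{\text t}(\cdot;z)$ on $B^+\subset G$ (Stage 1), the transmission conditions \eqref{eq2} on $\G\cap B_{2\rho}(x^*)$ force $\gamma_0(U-V)=\gamma_1(U-V)=0$ on the $\G$-part of $\partial\Om$, while on the spherical part of $\partial\Om$, which lies at distance $\ge\rho$ from the source, these traces are bounded uniformly in $z$ by Theorem \ref{thm4_1} and Remark \ref{rem4_1}. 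Hence \eqref{eq:stability} gives $\|V\|_{L^2(\Om)}\le C$ uniformly as $z\to x^*$. But $V=G_{k_1}'(\cdot;z)+\widetilde u'(\cdot;z)$ on $B^-\subset\widetilde\Om_1$, the scattered part $\widetilde u'(\cdot;z)$ stays bounded in $L^2(B^-)$ (well-posedness of \SP, since $z$ stays away from $\widetilde\G$ and $\widetilde D$), whereas $\|G_{k_1}'(\cdot;z)\|_{L^2(B^-)}\to\infty$ as $z\to x^*\in\partial B^-$ because the $|x-z|^{-1}$ singularity of the HSPSW fails to be square-integrable up to $\partial B^-$ — a contradiction. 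Therefore $\G=\widetilde\G$.

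\textit{Stage 3.} With $\G=\widetilde\G$, the transmission conditions and unique continuation extend $u^{\text t}(\cdot;z)=\widetilde u^{\text t}(\cdot;z)$ to $\Om^-\setminus\overline{D\cup\widetilde D}$ for $z\in G$, and then, via \eqref{eq_rr} and one further unique-continuation step, also for sources $z\in\Om^-\setminus\overline{D\cup\widetilde D}$. If $D\ne\widetilde D$, after interchanging configurations there is $x^*\in\partial D$ with $x^*\notin\overline{\widetilde D}$, $\mathrm{dist}(x^*,\G)>0$. Take $z_n\to x^*$ inside $\Om^-\setminus\overline{D\cup\widetilde D}$ along the exterior normal of $\partial D$, and a small neighbourhood $N$ of $x^*$ with $\overline N\subset\Om^-\setminus\overline{\widetilde D}$, $\overline N\cap\G=\varnothing$, $N^{\mathrm{ext}}:=N\setminus\overline D$. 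On $N^{\mathrm{ext}}$ the scattered fields coincide, so $\|u(\cdot;z_n)\|_{H^1(N^{\mathrm{ext}})}=\|\widetilde u(\cdot;z_n)\|_{H^1(N^{\mathrm{ext}})}$, which is bounded uniformly in $n$ because $x^*\notin\overline{\widetilde D}$ (Theorem \ref{thm4_1} for the $\widetilde{}$-configuration). But on $\partial D\cap N$ the condition $\mathcal B(u^{\text t}(\cdot;z_n))=0$ becomes a boundary condition for $u(\cdot;z_n)$ with data $-\mathcal B(\text{incident field at }z_n)$, whose $H^{-1/2}(\partial D\cap N)$-norm blows up as $z_n\to x^*\in\partial D$; by well-posedness of the direct problem (Theorem \ref{thm2}, Corollary \ref{coro3_1}), $\|u(\cdot;z_n)\|_{H^1(N^{\mathrm{ext}})}\to\infty$ — a contradiction. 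Hence $D=\widetilde D$. Finally, once $D=\widetilde D$, on any arc of $\partial D$ where the coated/uncoated partitions or the impedances $\beta,\widetilde\beta$ differ, the two boundary conditions together with $u^{\text t}=\widetilde u^{\text t}$, $\partial_\nu u^{\text t}=\partial_\nu\widetilde u^{\text t}$ force $u^{\text t}(\cdot;z)$ to have vanishing Cauchy data there for every $z$; Holmgren's theorem then gives $u^{\text t}(\cdot;z)\equiv0$, contradicting the singularity of the incident field, so $\mathcal B=\widetilde{\mathcal B}$.

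\textit{Main obstacle.} The crux is Stage 2: turning a \emph{single} discrepancy of interfaces into a contradiction through a \emph{local} interior transmission problem. Two facts must mesh — that the Cauchy data of $U-V$ vanish identically on the interface-piece inside the small ball (where the transmission conditions and the Stage-1 equality on $G$ are used), and that the $L^2$-mass of the incident wave concentrates near the small domain as the source tends to its boundary. The latter is exactly why hyper-singular point sources are indispensable (a plain PSW has only a logarithmic singularity, whose $L^2$-norm stays bounded), and hence why Theorem \ref{thm3}, which lets us reach HSPSWs from the given PSW data, is needed. A secondary technical point is the analytic-continuation argument in Stage 1 — that a radiating field vanishing on a line segment vanishes identically — together with the bookkeeping of unique continuation across $\G$ and around the bounded obstacles.
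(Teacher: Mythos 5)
Your proof follows essentially the same route as the paper's: the same Stage-1 propagation of the data identity via analyticity on $\G_c$, unique continuation and the reciprocity relation \eqref{eq_rr}; the same local interior transmission problem on a small domain below $\G$ driven by hyper-singular point sources (combining Theorems \ref{thm3}, \ref{thm4_1} and \ref{thm4}) to recover the interface; and the same singular-source/Holmgren arguments for $D$ and $\mathcal{B}$. The only deviations are cosmetic — the paper smooths your half-ball $B^-$ to a $C^2$ domain $\Om_0$ containing $B_{2\delta}\cap\Om_2$, and the final blow-up in your Stage 3 really rests on the trace theorem applied to $\mathcal{B}(u(\cdot;z_n))$ rather than on well-posedness — so the argument is correct as proposed.
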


\begin{proof}
{\bf Step 1.} We prove that $\G = \widetilde{\G}$.

Let $\Om$ be the unbounded connected component of $\Om_1\cap \widetilde{\Om_1}$. For $z \in \Om$, we first claim that
\be\label{eq5_1}
u(x;z) = \widetilde{u}(x;z) \quad \text{for all} \ x\in \overline{\Om}
\en
Since $u(\cdot;z)$ and $\widetilde{u}(\cdot;z)$ are both analytic in $\Om$ and $u(x;z) = \widetilde{u}(x;z)$ for 
all $x\in\Sigma_r$, then $u(x;z)=\widetilde{u}(x;z),x\in \G_c$. From the uniqueness of the Dirichlet problem in $U^+_c$, 
we know that \eqref{eq5_1} holds for $x\in U^+_c, z\in\Sigma_s$. By the unique continuation principle, \eqref{eq5_1} also 
holds for $x\in\Om,$ $z\in\Sigma_s$. By the reciprocity relation \eqref{eq_rr}, we have $u(z;x)=u(x;z)$ for $z\in\Sigma_s,$
$x\in\Om$. Repeating the above argument, we obtain that $u(x;z)=\widetilde{u}(x;z)$ for all $z\in\Om,$ $x\in\Om$. 
Since the scattered fields are continuous up to the boundary, \eqref{eq5_1} holds. By Theorem \ref{thm3} and \eqref{eq5_1} 
we have
\ben
u^{\prime}(x;z)=\widetilde{u}^{\prime}(x;z)\quad\text{for all}\;\;z\in\Om,\;\;x\in\overline{\Om}.
\enn
Assume that $\G\neq\widetilde{\G}$. Without loss of generality, we may assume that there exists $z^{\ast}\in\G\ba\widetilde{\G}$. 
Define $z_j:=z^{\ast}+(\delta/j)\nu(z^{\ast}), j\in\mathbb{N}^+,$ with $\delta>0$ such that $z_j\in B_{\delta}(z^{\ast})$, 
where $B_{\delta}(z^{\ast})$ is a ball centred at $z^\ast$ and with radius $\delta$ satisfying that 
$\ov{B_{2\delta}(z^{\ast})}\subset\widetilde{\Om_1}$. Choose a small domain $\Om_0\subset\Om_2$ with a $C^2$-boundary $\pa\Om_0$ 
such that $B_{2\delta}\cap\Om_2\subset\Om_0\subset\overline{\Om_0}\subset\widetilde{\Om_1}$ and let 
$d:=\text{dist}(\Om_0, \widetilde{\G})>0$. See the geometric setting in Figure \ref{fig2}. Define the scattered field 
$u^{\prime}_j(x):=u^{\prime}(x;z_j), \widetilde{u}^{\prime}_j(x):=\widetilde{u}^{\prime}(x;z_j)$ and the total field 
$u^{\prime\text{t}}_j(x):=u^{\prime\text{t}}(x;z_j),\widetilde{u}^{\prime\text{t}}_j(x):=\widetilde{u}^{\prime \text{t}}(x;z_j)$. 
Also we set $V_j=\widetilde{u}^{\prime\text{t}}_j|_{\Om_0}$ and $U_j=u^{\prime}_j|_{\Om_0}$ in $\Om_0$. 
Then $V_j$ and $U_j$ satisfy \ITP in $\Om_0$ with the boundary data 
$f_{1,j}:=(u^{\prime}_j-\widetilde{u}^{\prime\text{t}}_j)|_{\pa\Om_0}$, 
$f_{2,j}:=\pa(u^{\prime}_j-\widetilde{u}^{\prime\text{t}}_j)/\pa\nu|_{\pa\Om_0}$, $k^2=k^2_1, nk^2=k^2_2$ and $\im(n)\geq 0$. 
It is clear that $f_{1,j}=f_{2,j}=0$ on $\G^{\ast}=\G\cap\pa\Om_0$. Since $z^{\ast}$ has a positive distance 
from $\widetilde{\G}$, it follows from \ref{rem4_1} (ii) that
\ben
\|\widetilde{u}^{\prime}_j\|_{L^2(\Om_0)}+\|\widetilde{u}^{\prime}_j\|_{H^{\frac{1}{2}}(\pa\Om_0\ba\G^{\ast})} 
+\|\frac{\pa\widetilde{u}^{\prime}_j}{\pa\nu}\|_{H^{-\frac{1}{2}}(\pa\Om_0\ba\G^{\ast})}\leq C
\enn
uniformly with respect to $j\in\mathbb{N}$. Let $K=\Om_0\ba B_{2\delta}(z^{\ast})$. 
Then $\text{dist}(K,B_{\delta}(z^{\ast}))=\delta$. Theorem \ref{thm4_1} implies that $\|u^{\prime}_j\|_{H^1(K)}\leq C$
uniformly with respect to $j\in\mathbb{N}$. This, together with the trace theorem, implies that
\ben
\|u^{\prime}_j\|_{H^{\frac{1}{2}}(\pa\Om_0\ba\G^{\ast})}
+\|\frac{\pa u^{\prime}_j}{\pa\nu}\|_{H^{-\frac{1}{2}}(\pa\Om_0\ba\G^{\ast})}\leq C
\enn
uniformly with respect to $j\in\mathbb{N}$. Now we can invoke Theorem \ref{thm4} to conclude that
\be\label{eq:total}
\|\widetilde{u}^{\prime \text{t}}_j\|_{L^2(\Om_0)}\leq C
\en 
uniformly with respect to $j\in\N$. In fact, if $\im(n)>0$, then, by Theorem \ref{thm4} (i) 
the constructed interior transmission problem on $\Omega_0$ is well-posed. On the other hand, if $\im(n)=0$, 
then, by Theorem \ref{thm4} (ii) we can choose $\Omega_0$ sufficiently small so that $k^2$ is not an interior 
transmission eigenvalue on $\Omega_0$. In either case, \eqref{eq:total} follows from the estimate \eqref{eq:stability}.

Notice that the scattered field $\widetilde{u}'_j$ is bounded uniformly with respect to $j\in \N$, so we get the estimate
$\|G'_{k_1}(\cdot,z_j)\|_{L^2(\Om_0)}\leq C$
uniformly with respect to $j\in\N$. This is a contradiction since $G'_{k_1}(\cdot,z^{\ast})$ is not locally 
integrable in $\Om_0$. Thus we have $\G=\widetilde{\G}$.

\begin{figure}
\centering
\includegraphics[scale=0.6]{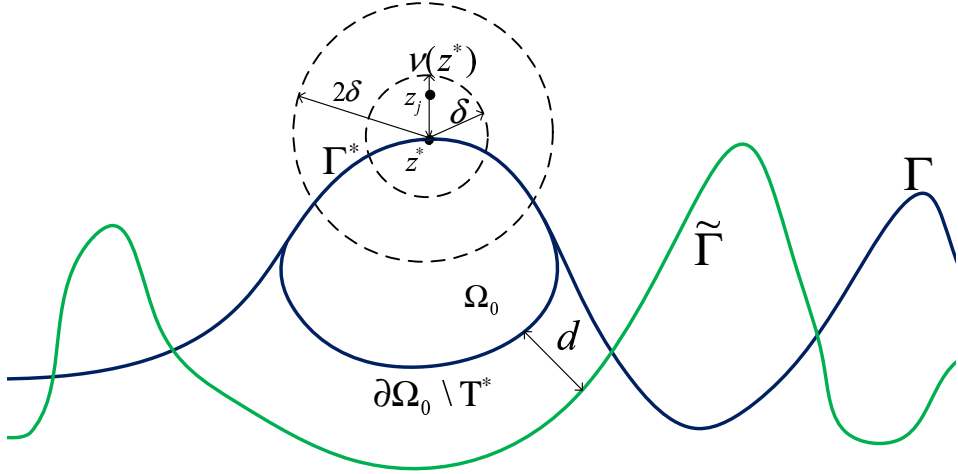}
\caption{Geometry in Step 1.}\label{fig2}
\end{figure}

{\bf Step 2.} We show that $D=\widetilde{D}$.

By Step 1, we already have $\Om_1=\widetilde{\Om_1}$. Let $S$ be the unbounded connected component of
$\R^2\backslash\{\overline{\Om_1}\cup\overline{D\cup\widetilde{D}}\}$. For $z\in S$, we claim that
\be\label{eq5_2}
u(x;z)=\widetilde{u}(x;z)\quad \text{for all}\;\; x\in\overline{S}.
\en
In fact, for $z\in\Om_1$, by Step 1 we have $u^{\text{t}}(x;z)=\widetilde{u}^{\text{t}}(x;z)$
for all $x\in\overline{\Om_1}$ and $x\neq z$, and by the transmission conditions we obtain that
\ben
u(x;z)=\widetilde{u}(x;z),\;\;\frac{\pa u}{\pa\nu}(x;z)=\frac{\pa\widetilde{u}}{\pa\nu}(x;z)\;\;\text{for all}\quad x\in\G,z\in\Om_1
\enn
Since $u$ and $\widetilde{u}$ satisfy the Hemholtz equation in $S$, Holmgren's uniqueness theorem implies that
\ben
u(x;z) = \widetilde{u}(x;z) \quad \text{for all}\;\; x\in \overline{S}, z\in\Om_1
\enn
This together with the reciprocity relation of the total field, implies that $u(z;x)=\widetilde{u}(z;x)$ 
for all $x\in S,z\in\Om_1$. Regarding $u$ and $\widetilde{u}$ as functions of $z$ and repeating the same argument as above 
yield that $u(z;x) = \widetilde{u}(z;x)\quad\text{for all}\;\;x,z\in S.$
Since the scattered fields are continuous up to the boundary, by exchanging $z$ and $x$, \eqref{eq5_2} holds.

Assume that $D\neq\widetilde{D}$. Without loss of generality, we may assume that there exists $z^{\ast}\in\pa D\ba\pa\wi{D}$. 
Define $z_j:=z^{\ast}+(\delta/j)\nu(z^{\ast}), j\in\mathbb{N}^+,$ with $\delta>0$
such that $z_j\in B_{\delta}(z^{\ast})$ and $\overline{B_{\delta}(z^{\ast})}\cap\widetilde{D}=\varnothing$.
Since there is a positive distance between $B_{\delta}(z^{\ast})$ and $\widetilde{D}$, by \eqref{eq5_2} and
Remark \ref{rem4_1} (ii), it follows that
\ben
\|\frac{\pa u(\cdot;z_j)}{\pa \nu} + i\beta u(\cdot;z_j)\|_{H^{-\frac{1}{2}}(\pa \G_2 \cup B_{\delta}(z^{\ast}))}
+\|u(\cdot;z_j)\|_{H^{\frac{1}{2}}(\pa \G_1 \cup B_{\delta}(z^{\ast}))}\leq C,
\enn
where the constant $C > 0$ is independent of $j$. But $u(\cdot;z_j)$ satisfies boundary conditions, so
\ben
&&\|\frac{\pa u(\cdot;z_j)}{\pa\nu} + i\beta u(\cdot;z_j)\|_{H^{-\frac{1}{2}}(\pa\G_2\cup B_{\delta}(z^{\ast}))}
+\|u(\cdot;z_j)\|_{H^{\frac{1}{2}}(\pa\G_1\cup B_{\delta}(z^{\ast}))}\\
&&\quad=\|\frac{\pa G_{k_2}(\cdot;z_j)}{\pa\nu}+i\beta G_{k_2}(\cdot;z_j)\|_{H^{-\frac{1}{2}}(\pa\G_2\cup B_{\delta}(z^{\ast}))}
+\|G_{k_2}(\cdot;z_j)\|_{H^{\frac{1}{2}}(\pa\G_1\cup B_{\delta}(z^{\ast}))}
\rightarrow\infty
\enn
as $j\rightarrow\infty.$ This is a contradiction, which means that $D=\widetilde{D}$.

{\bf Step 3.} We show that the physical property is uniquely determined, that is, $\mathcal{B}=\wi{\mathcal{B}}$. 
First, as a result of Step 2, we claim that
\be \label{eq5_3}
\G_i = \widetilde{\G}_i,\quad i = 1,2
\en
In fact, suppose \eqref{eq5_3} is not true. Then $\G_1\cap\widetilde{\G}_2\neq\varnothing$. For $z\in\Sigma_s$ 
we have $u(\cdot;z)=\pa u(\cdot;z)/\pa\nu = 0$ on $\G_1\cap\widetilde{\G}_2$, and by Holmgren's uniqueness theorem,
$u(\cdot;z)=0$ in $\Om_2$. Thus, $u^{\text{t}}(\cdot;z)=\pa u^{\text{t}}(\cdot;z)/\pa \nu=0$  on $\G$.
Applying Holmgren's uniqueness theorem again, we have $u(\cdot;z) = G_{k_1}(\cdot;z)$ in $\Om_1\ba B_{\delta}(z)$
for any $\delta >0$ such that $\overline{B_{\delta}(z)}\cap\G=\varnothing$. Let $\delta\rightarrow 0$ to get that $\|u(x;z)\|_{H^1(B_{\delta}(z))}\rightarrow\infty$, which contradicts to Remark \ref{rem4_1} (ii).
Thus, \eqref{eq5_3} holds.

Next we may assume that $\G_1$ and $\widetilde{\G}_1$ are both nonempty. If the impedance
function $\beta\neq\widetilde{\beta}$, then from the boundary condition on $\G_1$
\ben
\frac{\pa u(\cdot;z)}{\pa\nu}+i\beta u(\cdot;z)=0,\;\; \frac{\pa u(\cdot;z)}{\pa\nu}
 + i\widetilde{\beta}u(\cdot;z)=0\quad\text{on}\;\; \G_1,\;\;\text{for}\;\;z\in\Sigma_s,
\enn
which gives
\ben
(\beta - \widetilde{\beta})u(\cdot;z)=0\quad\text{on}\;\;\G_1\;\;\text{for}\;\; z\in\Sigma_s.
\enn
Consequently, $\pa u(\cdot;z)/\pa\nu=u(\cdot;z)=0$ on the open set $\{x\in\pa D:\;\beta(x)\neq\wi{\beta}(x)\}$. 
Then, we get the same contradiction as that in proving \eqref{eq5_3}.
Hence, $\mathcal{B}=\widetilde{\mathcal{B}}$. The proof is thus finished.
\end{proof}

\section*{Acknowledgements}

Most sections of this paper were finished while the first author (YL) was studying at AMSS, Chinese Academy of Sciences.
The work was partly supported by the NNSF of China under grants 61379093, 91430102 and 11501558.

\end{document}